\pgfplotsset{compat=1.16}
\newcommand{\RR}{\mathbb{R}}
\newcommand{\NN}{\mathbb{N}}
\newcommand{\dd}[1][x]{\mathrm{d}#1}
\newcommand{\pos}{\mathbf{r}}
\newcommand{\charges}{\mathbf{z}}
\newcommand{\sol}{u_{\pos, \charges}}
\newcommand{\Esol}{E_{\pos, \charges}}
\newcommand{\zetasol}{\zeta_{\pos, \charges}}
\newcommand{\pisols}{\bm \pi^{\pos, \charges}}
\newcommand{\pisol}{\pi^{\pos, \charges}}
\renewcommand{\S}[2]{\mathrm{S}_{#1, #2}}
\newcommand{\M}{\mathcal{M}}
\newcommand{\T}{\mathcal{T}}
\newcommand{\icdf}{\mathrm{icdf}}
\newcommand{\cdf}{\mathrm{cdf}}
\newcommand{\dinfn}[1][n]{\mathrm{d}_{\infty, {#1}}}
\newcommand{\ddn}[1][n]{\mathrm{d}_{2, {#1}}}
\newcommand{\dtildeinfn}[1][n]{\tilde{\mathrm{d}}_{\infty, {#1}}}
\newcommand{\dtildedn}[1][n]{\tilde{\mathrm{d}}_{2, {#1}}}
\newcommand{\dn}[1][n]{\mathrm{d}_{2, {#1}}}
\newcommand{\pis}{\bm \pi}
\newcommand{\zetas}{{\bm \zeta}}
\newcommand{\distW}{W_{2}}
\newcommand{\distMW}{\mathrm{MW}_{2}}
\newcommand{\kbar}{\mathbf{k}}
\newcommand{\lbar}{\mathbf{l}}
\newcommand{\wstar}{w^*}
\newcommand{\barW}{\mathrm{Bar}^{\bm \lambda}_{\distW}}
\newcommand{\barMW}{\mathrm{Bar}^{\bm \lambda}_{\distMW}}
\newcommand{\appbarMW}{\overline{\mathrm{Bar}}^{\bm \lambda}_{\distMW}}
\newcommand{\bary}{\mathrm{Bar}^{\bm \lambda}}
\newcommand{\ssol}{u_{r}}
\newcommand{\szeta}{\zeta_{r}}
\newcommand{\msol}{u_{r}}
\newcommand{\SM}{\mathrm{SM}}
\DeclareMathOperator{\asinh}{arcsinh}
\DeclareMathOperator*{\argmin}{argmin}
\DeclareMathOperator*{\argmax}{argmax}
\newcommand{\zetabar}{{\zeta^{\bm\lambda}}}
\newcommand{\rbar}[1][\kbar]{{r^{\bm\lambda}_{#1}}}
\renewcommand{\L}{\mathrm{L}}
\newcommand{\C}{\mathcal{C}}
\renewcommand{\P}{\mathcal{P}}
\newcommand{\dP}{\mathrm{P}}
\DeclareMathOperator{\vect}{span}
\newtheorem{remark}[theorem]{Remark}
\numberwithin{equation}{section}
\numberwithin{theorem}{section}
\title{Nonlinear reduced basis using mixture Wasserstein barycenters:\\ application to an eigenvalue problem inspired from quantum chemistry}
\author{Maxime Dalery\thanks{Université Marie et Louis Pasteur, CNRS, LmB (UMR 6623), F-25000 Besançon, France (genevieve.dusson@math.cnrs.fr)} \and
        Geneviève Dusson\footnotemark[1] \and
        Virginie Ehrlacher\thanks{CERMICS, \'Ecole des Ponts \& Inria Paris, Paris, France} \and
        Alexei Lozinski\footnotemark[1]}
\date{}
\begin{document}
\maketitle

\begin{abstract}
The aim of this article is to propose a new reduced order modelling approach for parametric eigenvalue problems arising in electronic structure calculations. Namely, we develop
nonlinear reduced basis techniques for the approximation of parametric eigenvalue problems inspired from quantum chemistry applications. More precisely, we consider here a one-dimensional model which is a toy model for the computation of the electronic ground state wavefunction of a system of electrons within a molecule, solution to the many-body electronic Schr\"odinger equation, where the varying parameters are the positions of the nuclei in the molecule. We estimate the decay rate of the Kolmogorov $n$-width of the set of solutions for this parametric problem in several settings, including the standard $L^2$-norm as well as with distances based on optimal transport. The fact that the latter decays much faster than in the traditional $L^2$-norm setting motivates us to propose a practical nonlinear reduced basis method, which is based on an offline greedy algorithm, and an efficient stochastic energy minimization in the online phase.
We finally provide numerical results illustrating the capabilities of the method and good approximation properties, both in the offline and the online phase.
\end{abstract}

\begin{keywords}
    reduced basis, eigenvalue problem, optimal transport, Wasserstein barycenters
\end{keywords}

\begin{AMS}
    65D05,65K10,41A63,60B05,47N50, 47A75
\end{AMS}

\section{Introduction}

In many academic and industrial applications, model order reduction techniques are used to accelerate the computation of the solutions to parametric partial differential equations. Techniques such as the reduced basis method give outstanding results for a large class of problems, see~\cite{Hesthaven2016-xm,Quarteroni2015-db}. 
A key factor determining the success of the method
is the approximability of the solution by a linear combination in a fixed vector space, possibly spanned by solutions for specific values of the parameters called snapshots. This ability is characterized by a fast-decreasing
of the
so-called Kolmorogov $n$-width, as described below. Such approach works very well in numerous cases, such as for linear elasticity equations~\cite{Milani2008-ne}, thermal equations~\cite{Rozza2010-ue}, see also~\cite{Quarteroni2011-rm} and references therein.
However, as was pointed out in~\cite{Ehrlacher2020-ef}, 
this approach proves ineffective in several cases,
especially when the solutions exhibit some transport of mass over parameter or time variation. This is, for instance, the case of the pure transport equation~\cite{Ohlberger2015-hu}. As the solution is translated over time, it is ineffective to approximate the solution as a linear combination of previous time steps, which would be the standard approximation using a linear reduced basis method. Another illustrative example is the electronic structure problem -- an eigenvalue problem where the solution tends to localize around the nuclei. This will be the problem of interest in this article. 
Also, again in~\cite{Ehrlacher2020-ef}, it is shown that for a simple Burgers equation, the Kolmogorov $n$-width decreases faster if one uses not a linear combination of previous snapshots, but Wasserstein barycenters between solutions, i.e. if there is some nonlinear transformation involved.

Recently, several works have built on this idea to propose nonlinear interpolations between solutions based on optimal transport. This is for example the case in~\cite{Iollo2022-ob}, where the authors propose a method based on an affine transformation of the snapshots to construct new approximations, or in~\cite{Nonino2019-ln}, where a preprocessing step using optimal transport is added to the offline phase.
In~\cite{Do2023-do}, a new method based on sparse Wasserstein barycenters is proposed. 
Other works include some machine-learning techniques to construct the nonlinear map, see e.g.~\cite{Romor2023-zv}, or to reconstruct higher frequency modes of the linear reduced basis operator from the low frequency modes, using trees or random forests~\cite{Cohen2023-xr}. 

The main limitation to the works based on optimal transport is the computational cost of Wasserstein barycenters, which do not scale well with the space dimension and the number of snapshots involved in the computation of the barycenter (solved through the so-called multi-marginal problem). A recent article~\cite{Delon2020-wk} has proposed a modified Wasserstein distance between mixtures of Gaussians, which was extended for more general mixtures in~\cite{Dusson2023-ah}.
This is particularly interesting for the electronic structure problem, where the solutions are often represented by a small number of functions of the same type, typically Gaussians or Slater functions. Indeed, this modified Wasserstein distance 
enables the computation of barycenters without suffering from the curse of dimensionality,
since the problem dimension depends on the number of components in the mixtures, and not on a potentially large spatial grid parameter.

In this article, we fully use this mixture distance to propose a nonlinear reduced basis method based on optimal transport. As is standard in reduced basis methods, our algorithm works in two stages. In the offline stage, a collection of snapshots is gathered using a greedy algorithm, selecting,  at each step, the worse-approximated snapshot in 
the set of barycenters between previously selected snapshots for this particular mixture distance.
In the online stage, i.e. when one wants to compute the solution for a new parameter, we minimize the energy on the set of barycenters between selected snapshots. It is a nonlinear problem but in a low-dimensional parameter space, so that the online cost stays reasonable.

The outline of this article is as follows. In Section~\ref{sec:prelim}, we present the settings, namely we present the one-dimensional eigenvalue toy problem we mainly focus on in this work, and we detail a few preliminaries on the Kolmogorov $n$-width, as well as on optimal transport and Wasserstein barycenters.
In Section~\ref{sec:kolmogorov} we state our main theoretical results about the estimations for the Kolmogorov $n$-width of the set of solutions of the one-dimensional toy problem, in different settings: 
with a linear approximation, with a Wasserstein transport metric, as well as a Wasserstein-type metric for mixtures. 
Our results are similar in spirit to those proved of~\cite{Ehrlacher2020-ef}, with the specificity that we focus on solution sets stemming from parametrized electronic structure calculation problems and that we consider the mixture Wasserstein metric~\cite{Delon2020-wk,Dusson2023-ah}, in addition to the exact Wasserstein and $L^2$ metrics. The proofs of the theoretical results are postponed to Section~\ref{sec:proofs}.
In Section~\ref{sec:nonlinearRB}, we present the nonlinear reduced basis method proposed in this work, which consists of an offline and an online stage. 
Let us emphasize that, despite the fact that we only illustrate its numerical behaviour on the one-dimensional toy problem mentioned above, the proposed numerical strategy  can in principle be adapted to solve realistic electronic structure problems in any dimension. 
This is the focus of ongoing work. The main novelty of the approach, in particular compared to the one proposed in~\cite{Ehrlacher2020-ef} is that, in the online phase, we use an energetic variational principle to determine the solution of the nonlinear reduced order model. Finally, we present numerical results in Section~\ref{sec:num}.

\section{Preliminaries} \label{sec:prelim}

The aim of this section is to introduce some preliminaries. We first present in Section~\ref{sec:modelproblem} the considered parametric eigenvalue problem, which is motivated by quantum chemistry applications. We then provide some definitions about Kolmogorov widths in Section~\ref{sec:Kolmogorov}, and we recall some fundamentals about the Wasserstein and Mixture Wasserstein metrics in Section~\ref{sec:OT}.

\subsection{An eigenvalue problem inspired from quantum chemistry}\label{sec:modelproblem}

In this article, we focus on the following one-dimensional eigenvalue partial differential
equation parameterized by $\pos := \left(r_1, \dots, r_M\right)\in\RR^M$
and $\charges := \left(z_1,\dots, z_M\right)\in(\RR^*_+)^M$ for $M\in \NN^*$.
More precisely, we are looking for the lowest eigenvalue $\Esol \in \RR$ and a corresponding eigenstate $\sol \in H^1(\RR)$  satisfying 
\begin{equation}\label{prob:schr_main}
        \displaystyle - \frac{1}{2}\sol'' + \left(-\sum_{m=1}^M z_m\delta_{r_m}\right)\sol= \Esol\sol.
\end{equation}
This problem can be seen as a toy ground state electronic structure problem, with an Hamiltonian of the form $-\frac12\Delta + V$, with a potential $V$ taken as a sum of Dirac masses
$V:=-\sum_{m=1}^M z_m\delta_{r_m}(x)$ localized at atomic positions $\pos$ with charges $\charges$.
In this simple framework, it can be easily checked that any eigenvector solution to this problem actually belongs to $L^1(\mathbb{R})$. More precisely, there exists a unique strictly positive
eigenvector solution to this problem such that $\| \sol\|_{L^1(\mathbb{R})} = 1$, and it is explicitly given by~\cite[Section 3.1]{Pham2017-bd}
\begin{equation} \label{eq:sol}
    \sol = \sum_{m=1}^M \pisol_m \S{\zetasol}{r_m},
\end{equation}
with $\pisols = \left(\pisol_m\right)_{m=1}^M \in (\RR_+)^M$ of total sum equal to $1$, $\zetasol >0$, and where for all $\zeta >0$ and all $r\in \mathbb{R}$, the Slater function $\S{\zeta}{r}$ is defined by 
\begin{equation} \label{eq:slater}
    \S{\zeta}{r}: x \longmapsto \frac{\zeta}{2} e^{-\zeta|x - r|}.
\end{equation}
Note that the normalization with respect to the $L^1$-norm in problem~\eqref{prob:schr_main} is not standard, but $u_{\pos,\charges}$ can thus be interpreted as the density associated with a probability measure on $\mathbb{R}$, which will be an essential feature in the following. In the rest of the article, we will make an abuse of notation and identify an absolutely continuous probability measure with its associated probability density.
We refer the reader to~\cite{Pham2017-bd} for an extensive review on the link between this toy one-dimensional model and actual electronic structure calculation problems in molecules.

The eigenvalue problem~\eqref{prob:schr_main} can be equivalently formulated as an energy minimization problem
\begin{equation}
    \label{eq:energy_min}
        \min_{\substack{u\in H^1(\RR) \\ \|u\|_{L^1(\RR)}=1}}
    \frac{\Esol(u)}{\|u\|_{L^2(\RR)}^2},
\end{equation}
with 
\[
    \Esol(u) := \frac12 \int_\RR |u'|^2 - \sum_{m=1}^M z_m u(r_m)^2.
\]

\medskip

Let us mention here some particular explicit formulas available for small values of $M$.
\begin{itemize}
\item[\bfseries Case~1:] For $M = 1$, $\pos = (r)$ and $\charges = (z)$ for some $r\in \mathbb{R}$ and $z>0$, it holds that (see ~\cite[Theorem~3.4]{Pham2017-bd})
\begin{equation} \label{eq:mono_zeta}
    \zetasol = z.
\end{equation}
\item[\bfseries Case~2:] For $M = 2$, $\pos = (-r, r)$ and $\charges = (z, z)$ with $r,z>0$, it holds that (see~\cite[Corollary~3.3]{Pham2017-bd})
\begin{equation} \label{eq:sym2_zetapis}
    \zetasol = z + \frac{W\left(2zre^{-2zr}\right)}{2r}
    \text{~~~and~~~}
    \pisol = \left( \frac{1}{2}, \frac{1}{2} \right),
\end{equation}
where $W$ is the Lambert function defined as the inverse of the function $x \mapsto xe^x$.
In this special case, we also have the following equality on $\zetasol$:
\begin{equation} \label{eq:zeta_eq}
    \zetasol = z \left( 1 + e^{-2\zetasol r} \right).
\end{equation}
\end{itemize}

\subsection{Kolmogorov widths in metric spaces}
\label{sec:Kolmogorov}

Let us now introduce some definitions, which will play the role of generalized Kolmogorov $n$-widths in our setting.
We first start by recalling the definition of the Kolmogorov $n$-width in a Hilbert space $\mathbb{H}$ endowed with a scalar product $\langle \cdot, \cdot \rangle$ and associated norm $\|\cdot\|$. We denote by $\dP_V:\mathbb{H} \longrightarrow V$ the projection operator onto a closed vector subspace $V\subset \mathbb{H}$. 

We also denote by $\mathbf{Z}\subset\RR^p$ a subset of parameter values, and for all $z\in \mathbf{Z}$, we assume that $u(z)$ is an element of $\mathbb{H}$. Finally, we denote by $\mathcal E$ the following set
$$
\mathcal E:=  \{u(z),~z\in\mathbf{Z}\} \subset \mathbb{H}. 
$$

\begin{definition}
The $\L^\infty$ Kolmogorov $n$-width of $\mathcal{E}$ is defined by
    \begin{align}
        \dinfn(\mathcal{E}, \mathbb{H})
            := \inf_{\substack{V_n\subset \mathbb{H}\\\dim V_n=n}}\sup_{z\in \mathbf{Z}}~\Vert u(z)-\dP_{V_n}u(z)\Vert. \label{eq:def_dn}
    \end{align}
    The $\L^2$ Kolmogorov $n$-width of $\mathcal E$ is defined by
    \begin{align}
        \ddn (\mathcal{E}, \mathbb{H}) :&= 
         \inf_{\substack{V_n\subset \mathbb{H}\\\dim V_n=n}} \left(\int_{z\in\mathbf{Z}}~\Vert u(z)-\dP_{V_n}u(z)\Vert^2\dd[z]\right)^{1/2}. \label{eq:def_deltan}
    \end{align}
\end{definition}

Let us point out that it can easily be checked that
\begin{equation}\label{eq:ineq}
 \ddn (\mathcal{E}, \mathbb{H})  \leq |\mathbf{Z}|^{1/2}\dinfn(\mathcal{E}, \mathbb{H})
\end{equation}
where $|{\mathbf{Z}}|$ refers to the Lebesgue measure of the set $\mathbf{Z}$. 

\medskip

In the following, we extend this definition in a meaningful way to the case where $\mathcal E$ is not a subset of a Hilbert space $\mathbb{H}$, but of a metric space $\mathbb{M}$ equipped with a distance $\delta$. 
First a natural generalization consists in replacing the quantity $\Vert u(z)-\dP_{V_n}u(z)\Vert$ by the quantity $\displaystyle \mathop{\inf}_{v_n \in V_n} \delta(u(z), v_n)$. Note however that there is no notion of vectorial subspace in a metric space.
Instead, we consider the notion of barycenters whenever this is well-defined. In a formal way, this corresponds for a given ${\bm u}:=( u_1, \ldots, u_n) \in \mathbb{M}^n$ to 
generating all possible barycenters defined as
\begin{equation} \label{eq:barycenter_metric_space}
     \bary_\delta({\bm u}) \in \mathop{\argmin}_{u\in \mathbb{M}} \sum_{i=1}^n\lambda_i \delta(u, u_i)^2, 
\end{equation}
for $\bm\lambda = (\lambda_1,\ldots,\lambda_n)\in \Lambda_n$ in a subset $\Lambda_n$ of $\mathbb{R}^n$ guaranteeing that problem~\eqref{eq:barycenter_metric_space} is well-posed.
We assume in the following that the metric~$\delta$ is such that there exists at least one solution $ \bary_\delta({\bm u})$ to problem \eqref{eq:barycenter_metric_space}. 
In the case when $\mathbb{M}$ is a Hilbert space $\mathbb{H}$ and the metric $\delta$ is defined by $\delta(u,v) =\|u-v\|$ for all $u,v\in \mathbb{H}$, 
the solution to problem~\eqref{eq:barycenter_metric_space} is unique if and only if $\sum_{i=1}^n \lambda_i >0$ and is given by $\sum_{i=1}^n \lambda_i u_i$ provided $\sum_{i=1}^n \lambda_i =1$.
In general, we will denote in the sequel $\mathcal B_\delta^{\bm \lambda}({\bm u})$ the set of minimizers to~\eqref{eq:barycenter_metric_space}.

\medskip

Assume now that $\mathcal E \subset \mathbb{M}$. The most straightforward extension of the notion of Kolmogorov $n$-width in a metric space setting is given in the following definition.
\begin{definition}
    \label{def:kolmo_nwidth}
    The metric $\L^\infty$ Kolmogorov $n$-width of the set $\mathcal{E} \subset \mathbb{M}$ is defined by
    \begin{equation*}
        \dinfn(\mathcal{E}, \mathbb{M}) = \inf_{{\bm m}\in \mathbb{M}^n}
            \sup_{z\in\mathbf{Z}}~\inf_{\bm\lambda\in \Lambda_n} ~\inf_{b \in \mathcal B_\delta^{\bm \lambda}({\bm m})} \delta (u(z), b).
    \end{equation*}
    Similarly, the metric $\L^2$ Kolmogorov $n$-width is defined by
    \begin{equation*}
        \ddn(\mathcal{E}, \mathbb{M}) := \inf_{{\bm m}\in \mathbb{M}^n}
            \left(\int_{z\in\mathbf{Z}}~ ~\inf_{\bm\lambda\in \Lambda_n} ~\inf_{b \in \mathcal B_\delta^{\bm \lambda}({\bm m})} \delta (u(z), b)^2 \right)^{1/2}.
    \end{equation*}
\end{definition}

\subsection{Wasserstein spaces}\label{sec:OT}

The aim of this section is to present some preliminairies about Wasserstein and mixture Wasserstein spaces. 

\subsubsection{Wasserstein metric}

Let $\P_2(\RR)$ denote the set of probability measures on $\RR$ with finite second-order moments. The 2-Wasserstein distance over $\P_2(\RR)$ is defined for $u, v \in \P_2(\RR)$ as
\begin{equation*}
    \distW(u, v) := \inf_{\pi \in \Pi(u, v)} \left( \int_{\RR^2} (x - y)^2\dd[\pi(x, y)] \right)^{1/2},
\end{equation*}
where $\Pi(u, v)$ is the set of probability measures over $\RR^2$ with marginals $u$ and $v$, which is called the set of transport plans between $u$ and $v$. Note that $(\mathcal P_2(\RR),W_2)$ is a geodesic metric space~\cite{Delon2020-wk}.

For any measurable map $T: \mathbb{R} \to \mathbb{R}$ and any probabilty measure $\rho\in \mathcal P_2(\mathbb{R})$, the push-forward measure $T\#\rho$ is defined as the probability measure on $\mathbb{R}$ defined so that for all measurable sets $B\subset \mathbb{R}$, $T\#\rho(B) = \rho(T^{-1}(B))$.

For $n\in \mathbb{N}^*$, let us denote by 
\[
\Lambda_n = \left\{ 
(\lambda_1,\ldots,\lambda_n) \in (\RR_+)^n, \quad \sum_{i=1}^n{\lambda_i} = 1
\right\}
\]
the set of barycentric weights of cardinality $n$. The Wasserstein barycenter of a collection of $n$ probability measures ${\bm u}:=(u_1, \dots, u_n) \in \P_2(\RR)^n$ associated to a set of barycentric weights ${\bm \lambda} := (\lambda_1,\ldots,\lambda_n) \in \Lambda_n$  is then defined (see~\cite{Agueh2011-uz}) as the unique solution to the problem
\begin{equation}\label{eq:barycenter}
     \mathop{\inf}_{u\in \P_2(\RR)} \sum_{i=1}^n \lambda_i \distW(u, u_i)^2.
\end{equation}
The unique minimizer of~\eqref{eq:barycenter} is  denoted by $\barW({\bm u})$.
This barycenter is also related to the so-called multi-marginal optimal transport problem~\cite{Agueh2011-uz,Gangbo1998-wi}, defined, given $n$ elements
$\bm u = (u_1, \dots, u_n)$ in $\P_2(\RR)^n$, as
\begin{equation}\label{eq:WassMulti}
        m\distW({\bm u}; {\bm \lambda}) := \inf_{\pi \in \Pi(u_1, \dots, u_n)} \left( \int_{\RR^n}
        \frac{1}{2} \sum_{i, j = 1}^n \lambda_i \lambda_j  (x_i - x_j)^2 \dd[\pi(x_1,\ldots,x_n)] \right)^{1/2},
\end{equation}
where $\Pi(u_1, \dots, u_n)$ is the set of probability measures over $\RR^n$ with marginals $u_1, \dots, u_n$, and there holds
\begin{equation*}
    m\distW({\bm u}; {\bm \lambda}) =  \left[ \sum_{i=1}^n \lambda_i \distW(\barW({\bm u}), u_i)^2 \right]^{1/2}.
\end{equation*}

In the present one-dimensional setting, the Wasserstein distance and barycenter can be expressed in a more direct way using the inverse cumulative distribution function of the considered probability measures. More precisely, 
we introduce the cumulative distribution function ($\cdf$) of an element $u \in \P_2(\RR)$ as
\[
\cdf_u : x \in \RR \mapsto \displaystyle \int_{-\infty}^x \dd[u](t)
\]
and its inverse cumulative distribution function ($\icdf$) as the generalized inverse of the $\cdf$:
\begin{equation*}
    \icdf_u :
    \left\{ \begin{array}{ccc}
        [0, 1] & \longrightarrow & \RR, \\
        s & \longmapsto & \cdf_u^{-1} := \inf\{x \in \RR,~ \cdf_u(x) > s \}.
    \end{array} \right.
\end{equation*}
Then, for any $(u, v) \in \P_2(\RR)^2$, there holds
\begin{equation} \label{eq:W2_icdf}
    \distW(u, v) = \| \icdf_u - \icdf_v \|_{\L^2([0,1])},
\end{equation}
and for any set of barycentric weights ${\bm \lambda}:= (\lambda_1, \ldots, \lambda_n)\in \Lambda_n$  and ${\bm u}:= (u_1, \dots, u_n) \in \P_2(\RR)^n$, the icdf of the barycenter $\barW({\bm u})$ satisfies
\begin{equation}\label{eq:icdfbary}
    \icdf_{\barW({\bm u})} = \sum_{i = 1}^n \lambda_i \; \icdf_{u_i}.
\end{equation}
We will significantly use this convenient property \eqref{eq:icdfbary}, which is specific to the one-dimensional setting, in our analysis. 

\medskip

\subsubsection{Slater mixture Wasserstein metric}

In higher dimensional spaces, such a characterization does not exist, so that the computation of Wasserstein distances and barycenters is more involved. However, for some specific classes of probability distributions, the Wasserstein distances and barycenters are explicit. This is the case for Gaussian distributions, and more generally, for all location-scatter distributions~\cite{Alvarez-Esteban2016-us}, i.e. all distributions that can be related with an affine transportation map, see also~\cite[Section~4.1]{Dusson2023-ah}. 
In this contribution, we will draw a particular interest to Slater distributions, as defined in~\eqref{eq:slater}.
Noting that the mean of the Slater distribution $\S{\zeta}{r}$ is $r$ and the variance is $2/\zeta^2$,
we can easily obtain the explicit expression of the Wasserstein distance between two Slater distributions from~\cite[Theorem~2.3]{Alvarez-Esteban2016-us}. More precisely, for $\zeta_1, \zeta_2>0$ and $r_1,r_2\in\RR$, there holds
\begin{equation}\label{eq:WassSlater}
        \distW\left(\S{\zeta_1}{r_1}, \S{\zeta_2}{r_2}\right)^2 = (r_1 - r_2)^2 + 2\left( \frac{1}{\zeta_1} - \frac{1}{\zeta_2} \right)^2.
\end{equation}

Moreover, thanks to~\cite[Theorem 2.4]{Alvarez-Esteban2016-us}, it is also possible to obtain an explicit expression for the barycenter between $n\in \mathbb{N}^*$ Slater distributions. 
Indeed, for ${\bm \lambda}:=(\lambda_1, \ldots, \lambda_n)\in \Lambda_n$, ${\bm \zeta}:=(\zeta_1, \ldots, \zeta_n)\in (\mathbb{R}_+^*)^n$, ${\bm r}:=(r_1, \ldots, r_n)\in \mathbb{R}^n$, denoting by ${\bm S}_{{\bm \zeta},{\bm r}}:= (\S{\zeta_1}{r_1}, \dots,  \S{\zeta_n}{r_n})$, 
\begin{equation}\label{eq:barySlater}
        \barW\left({\bm S}_{{\bm \zeta},{\bm r}}\right) = \S{\zeta^{\bm \lambda}}{r^{ {\bm \lambda}}},
\end{equation}
    with
\begin{equation}\label{eq:formula}
        \zeta^{\bm \lambda} = \left[\sum_{i=1}^n \frac{\lambda_i}{\zeta_i} \right]^{-1}
            \text{~~~and~~~}
            r^{{\bm \lambda}} = \sum_{i=1}^n \lambda_i r_i.
\end{equation}

Since the solutions of problem~\eqref{prob:schr_main} are convex combinations of Slater distributions, as stated in~\eqref{eq:sol}, we are interested in approaches to efficiently compute Wasserstein-like distances and barycenters for such objects. To this aim, let us start by precisely defining mixtures of Slater distributions. 
A mixture of Slater distributions $m$ is a finite convex combination of Slater distributions i.e. a probability distribution such that
there exists $K \in \NN^*$, a $K$-tuple $\zetas = (\zeta_1, \dots, \zeta_K)\in (\RR_+^*)^K$,
a position tuple $\pos = (r_1, \dots, r_K)\in\RR^K$ and barycentric weights $\pis = (\pi_1, \dots, \pi_K)\in \Lambda_K$
such that
\begin{equation*}
    m = \sum_{k = 1}^K \pi_k \S{\zeta_k}{r_k}.
\end{equation*}
Such a probability distribution will be denoted by $m_{{\bm \zeta}, {\bm r}, {\bm \pi}}$ in the following.
Therefore, the solution of~\eqref{prob:schr_main} with parameters $M$, $\pos$, and $\charges$ is a mixture of Slater distributions with $K = M$, position parameters $\pos$,
scale parameters $\zeta_1 = \dots = \zeta_K = \zetasol$ and weights $\pis = \pisols$.

\medskip

We denote by  $\SM(\RR)$ the subset of $\P_2(\RR)$ of Slater mixtures, that is
\begin{align}
\label{eq:SMR}
      \SM(\RR) := \Big\{  &
    m \in \P_2(\RR), \; \exists K\in\NN^*, \; 
     \exists \zeta_1, \dots, \zeta_K\in \RR^*_+, \;
    \exists r_1,\ldots, r_K \in \RR
    , \; \\&
    \exists \pis = (\pi_1, \dots, \pi_K)\in \Lambda_K, \;
    m = \sum_{k = 1}^K \pi_k \S{\zeta_k}{r_k}
    \Big\}. \nonumber
\end{align} 
In order to compute meaningful and easily computable distances, we in fact endow this space $\SM(\RR)$ not with the Wasserstein distance, but with a modified Wasserstein distance, as was proposed in~\cite{Delon2020-wk,Chen2018-vn} for mixtures of Gaussians, and recently extended to the case of Slater functions in~\cite{Dusson2023-ah}. 
Therefore, we endow $\SM(\RR)$ with this Wasserstein-type distance denoted by $\distMW$. More precisely, for $m^1, m^2\in\SM(\RR)$, 
with parameters $K^1,K^2\in\NN^*$, ${\bm \zeta}^1:=(\zeta^1_1, \ldots, \zeta^1_{K_1})\in (\RR_+^*)^{K_1}$, ${\bm \zeta}^2=(\zeta^2_1, \ldots, \zeta^2_{K_2})\in (\RR_+^*)^{K_2}$, 
$ {\bm r}^1:=(r^1_1,\ldots,r^1_{K_1})\in \RR^{K_1}$, ${\bm r}^2:=(r^2_1, \ldots, r^2_{K_2})\in \RR^{K_2}$, ${\bm \pi}^1=(\pi_1^1, \ldots, \pi_{K_1}^1) \in \Lambda_{K^1},$
${\bm \pi}^2=(\pi_1^2, \ldots, \pi_{K_2}^2) \in \Lambda_{K^2},$
namely
\[
    m^1 = m_{{\bm \zeta}^1, {\bm r}^1, {\bm \pi}^1} = \sum_{k^1 = 1}^{K^1} \pi^1_{k^1} m^1_{k^1}, \quad  
    m^2 = m_{{\bm \zeta}^2, {\bm r}^2, {\bm \pi}^2}= \sum_{k^2 = 1}^{K^2} \pi^2_{k^2} m^2_{k^2},
\]
where
\[
    \text{for }i = 1,2, \quad \text{for } k^i = 1,\ldots, K^i, \quad m^i_{k^i} = \S{\zeta^i_{k^i}}{r^i_{k^i}},
\]
then the modified Wasserstein distance on the set of Slater mixtures is defined through the following minimization problem
\begin{equation}
    \label{eq:mixture_distance}
    \distMW(m^1, m^2) := \left[ \min_{w \in \Pi(\pis^1, \pis^2)} \sum_{k^1 = 1}^{K^1} \sum_{k^2 = 1}^{K^2}
    w_{k^1, k^2} \distW^2\left(m^1_{k^1}, m^2_{k^2}\right) \right]^{1/2},
\end{equation}
where
\begin{equation*}
    \Pi(\pis^1, \pis^2) = \left\{ w \in \RR_+^{K^1 \times K^2}, ~ \sum_{k^1 = 1}^{K^1} w_{k^1, k^2} = \pi^2_{k^2}, ~ \sum_{k^2 = 1}^{K^2} w_{k^1, k^2} = \pi^1_{k^1} \right\}.
\end{equation*}
The minimization problem~\eqref{eq:mixture_distance} is well-posed as the function to minimize is continuous on the bounded closed finite dimensional set $\Pi(\pis^1, \pis^2)$. 
Note that the Wasserstein distances $W_2^2(m_{k^1}^1, m_{k_2}^2)$ can be analytically computed using formula (\ref{eq:WassSlater}). In~\cite{Dusson2023-ah}, it is proved that $(\SM(\RR),MW_2)$ is a geodesic metric space.

A multi-marginal optimal transport problem can similarly be defined for the distance $\distMW$.
More precisely, for all ${\bm \lambda} = (\lambda_1, \ldots, \lambda_n)\in \Lambda_n$ and all ${\bm m} = (m^1, \ldots, m^n)\in {\rm SM}(\mathbb{R})^n$, we introduce
\begin{equation}
\label{eq:mMW2}
   m\distMW({\bm m}; {\bm \lambda} ) := \left[ \min_{w \in \Pi(\pis^1, \dots, \pis^n)} \sum_{\kbar\in\mathbf{K}}
    w_{\kbar} \; m\distW^2\left(m^1_{k^1}, \dots, m^n_{k^n}; {\bm \lambda}\right) \right]^{1/2},
\end{equation}
where $\kbar = (k^1, \dots, k^n)$ is in $\mathbf{K} = \{1, \dots, K^1\}\times\dots\times\{1, \dots, K^n\}$ with
\[
\forall i=1,\ldots n, \quad m^i = \sum_{k^i = 1}^{K^i} \pi^i_{k^i} m^i_{k^i},
\]
so that $(w_{\kbar})_{\kbar \in\mathbf{K}}$ is an $n$-order tensor with respective dimensions $(K^1,\ldots, K^n)$. Recall that the quantities $m\distW^2\left(m^1_{k^1}, \dots, m^n_{k^n}; {\bm \lambda}\right)$ are defined using formula~\eqref{eq:WassMulti}.
Moreover, $\Pi(\pis^1,\ldots \pis^n)$ is the set of such tensors with non-negative coefficients satisfying the constraints
\[
    \forall i=1,\ldots, n, \quad \sum_{k^1,\ldots, k^{i-1}, k^{i+1}, \ldots, k^n } w_{k^1,\ldots, k^n} = \pi^i_{k^i},
\]
where $\pis^i = (\pi^i_{k^i})_{1\leq k^i\leq K^i}$. 

\medskip

A barycenter between $n$ Slater mixtures ${\bm m} = (m^1, \ldots, m^n)\in SM(\mathbb{R})^n$ with barycentric weights ${\bm \lambda} = (\lambda_1, \ldots, \lambda_n)\in \Lambda_n$ for the mixture distance $\distMW$ (see \eqref{eq:barycenter_metric_space}) is defined as an element $\barMW\left({\bm m}\right) \in {\rm SM}(\mathbb{R})$ solution to the following minimization problem: 
\begin{equation}\label{eq:barymixture}
\barMW\left({\bm m}\right) \in \mathop{\rm argmin}_{b\in {\rm SM}(\mathbb{R})} \sum_{i=1}^n \lambda_i \; {\rm MW}_2(b,m^i)^2.
\end{equation}
 
It is shown in~\cite{Dusson2023-ah} that there always exists at least one solution to \eqref{eq:barymixture} and that any solution can be expressed as 
 \begin{equation}
        \label{eq:bar_mixtures}
        \barMW\left({\bm m}\right) =
        \sum_{\kbar \in \mathbf{K}} (w^*({\bm m}; {\bm \lambda}))_{\kbar} \barW\left(m^1_{k^1}, \dots, m^n_{k^n}\right),
\end{equation}
where $w^*({\bm m}; {\bm \lambda})$ is a solution to the minimization problem~\eqref{eq:mMW2}.

\begin{remark}
    \label{rem:sparsity}
    Note that all minimizers $w^*({\bm m};{\bm \lambda})$ of problem~\eqref{eq:mMW2} have at most $K_1+K_2+\ldots+K_n-n+1$ nonzero components, see e.g.~\cite{Delon2020-wk}.
\end{remark}

\section{Theoretical estimations of the decay of Kolmogorov \texorpdfstring{$n$}{n}-widths} \label{sec:kolmogorov}

In this section, we provide estimates for metric Kolmogorov $n$-width of sets of solutions of the one-dimensional toy model presented in Section~\ref{sec:modelproblem} using three different metrics. We state our main results here and postpone their proofs to Section~\ref{sec:proofs}. First, we study the Kolmogorov $n$-width with an underlying $L^2$-norm, which is the standard choice in linear reduced order modelling. We then turn to the  metric Kolmogorov $n$-width with respect to the Wasserstein metric, which in the one-dimensional case, can be recast as a Kolmogorov $n$-width for the $L^2$-norm on the inverse cumulative distribution functions of the solutions, and we show that the decay rate is faster than for the traditional $L^2$-norm. Finally, we show that this decay can be improved by considering the modified Wasserstein metric defined in~\eqref{eq:mixture_distance}.

\subsection{On linear approximations: case \texorpdfstring{$\mathbb{H} = L^2(\mathbb{R})$}{H = L2(R)}}

In this section, we state estimates on the $L^2$ Kolmogorov $n$-width decay of a set of solutions of~\eqref{prob:schr_main}
where the parameters vary in a compact set.
We consider the charges $\charges$ to be fixed and define the set of solutions
\begin{equation*}
    \M_\charges^R := \left\{ \sol,~ \pos\in[-R, R]^M \right\},
\end{equation*}
where $R \in \RR_+$ is a given parameter and $\sol$ is the solution to~\eqref{prob:schr_main}. We first consider the case $M = 1$, i.e. the case where the potential is a single Dirac delta.
We recall in~\eqref{eq:mono_zeta} the particular form of $\zetasol$
and denote by $\msol = \sol$ the solution of~\eqref{prob:schr_main}.
We also denote $\M_z^R$ the above set in this case.

\begin{theorem} \label{thrm_linear_comp}
    There exist positive constants $c_R$, $C_R$, $\tilde{c}_R$ and $\tilde{C}_R$ depending on $R$
    such that for all $n\in\NN^*$,
    \begin{equation} \label{eq:thrm:linear_comp:dn}
        c_R n^{-\frac{3}{2}} \leqslant \dinfn(\M_z^R, \L^2(\RR)) \leqslant C_R n^{-\frac{3}{2}},
    \end{equation}
    and
    \begin{equation} \label{eq:thrm:linear_comp:deltan}
        \tilde{c}_R n^{-\frac{3}{2}} \leqslant \ddn(\M_z^R, \L^2(\RR)) \leqslant \tilde{C}_R n^{-\frac{3}{2}}.
    \end{equation}
\end{theorem}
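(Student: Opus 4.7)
For the upper bound, I would construct an approximation via piecewise Taylor expansion in $r$. Take a uniform mesh $-R = r_0 < \ldots < r_K = R$ with step $h = 2R/K$ and $K \sim n/2$, and set $V_n := \vect\{u_{r_k}, \partial_r u_{r_k}\}_{k=0}^K \subset \L^2(\RR)$, of dimension at most $n$. For $r \in [r_k, r_{k+1}]$, approximate $u_r$ by $\tilde u_r := u_{r_k} + (r - r_k)\, \partial_r u_{r_k} \in V_n$, and split the squared $\L^2(\RR)$-error across $x$ into the kink region $I_k = [r_k, r_{k+1}]$ and its complement. Outside $I_k$, the map $r \mapsto u_r(x)$ is $C^\infty$ with $\partial_r^2 u_r(x) = z^2 u_r(x)$ exponentially decaying in $\mathrm{dist}(x, I_k)$, so Taylor's remainder contributes $O(h^4)$; inside $I_k$, direct computation of the kink of $u_r(x)$ at $x = r$ yields the dominant $O(h^3)$ contribution. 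Hence $\sup_r \|u_r - \tilde u_r\|_{\L^2(\RR)} \leq C_R n^{-3/2}$, giving the upper bound on $\dinfn$, and the one on $\ddn$ follows from \eqref{eq:ineq}.

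For the lower bound, it suffices to estimate $\ddn$ from below, since $\dinfn \geq \ddn/\sqrt{2R}$ (which is \eqref{eq:ineq} reversed, by max $\geq$ average). Fix any $n$-dimensional subspace $V_n$ with orthonormal basis $\{e_i\}_{i=1}^n$, set $e_r := u_r - \dP_{V_n} u_r$, and for $h > 0$ introduce the second difference $v(r) := u_{r+h} - 2 u_r + u_{r-h}$. A direct computation using $\langle u_r, u_{r'}\rangle_{\L^2(\RR)} = \tfrac{z}{4}(1 + z|r-r'|)\, e^{-z|r-r'|}$ and Taylor expansion in $zh$ yields
\[
\|v(r)\|^2_{\L^2(\RR)} = \frac{z}{4}\bigl[6 - 8(1+zh)\, e^{-zh} + 2(1+2zh)\, e^{-2zh}\bigr] = \frac{2 z^4 h^3}{3} + O(h^4),
\]
which quantifies the non-smoothness of $r \mapsto u_r$. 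On the other hand, setting $\phi_i(r) := \langle u_r, e_i\rangle_{\L^2(\RR)}$, the translation structure $u_r(x) = u_0(x-r)$ gives $\hat\phi_i(\xi) = \tfrac{z^2}{z^2+\xi^2}\, \hat e_i(\xi)$, and Parseval's identity together with $16 \sin^4(h\xi/2) \leq h^4 \xi^4$ provides
\[
\int_\RR \bigl|\phi_i(r+h) - 2\phi_i(r) + \phi_i(r-h)\bigr|^2 \dd[r] \leq h^4 \int_\RR \frac{z^4 \xi^4}{(z^2+\xi^2)^2} |\hat e_i(\xi)|^2 \frac{\dd[\xi]}{2\pi} \leq h^4 z^4,
\]
so summing over $i=1,\ldots,n$ gives $\int_\RR \|\dP_{V_n} v(r)\|^2_{\L^2(\RR)} \dd[r] \leq n h^4 z^4$. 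The Pythagorean identity, the equality $(I - \dP_{V_n})v(r) = e_{r+h} - 2 e_r + e_{r-h}$, and $(a + 2b + c)^2 \leq 6(a^2 + b^2 + c^2)$ then yield
\[
18 \int_{-R}^R \|e_r\|^2 \dd[r] \geq \int_{-R+h}^{R-h} \|v(r)\|^2 \dd[r] - n h^4 z^4 \gtrsim R z^4 h^3 - n h^4 z^4.
\]
Choosing $h$ of order $R/n$ (say $h = R/(4n)$) makes the right-hand side $\gtrsim z^4 R^4/n^3$, and taking the infimum over $V_n$ gives $\ddn \geq \tilde c_R\, n^{-3/2}$.

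The main obstacle is the lower bound, whose crux is the combination of two sharp estimates: the exact asymptotic $\|v(r)\|^2_{\L^2(\RR)} \sim h^3$ (reflecting that $\partial_r u_r$ is only $1/2$-H\"older in $\L^2(\RR)$, because of the delta-like singularity of the formal $\partial_r^2 u_r$ at $x = r$), and the Parseval-type bound $\int \|\dP_{V_n} v\|^2 \lesssim n h^4$ (which exploits the convolutional structure $u_r = u_0(\cdot - r)$). Balancing the $h^3$ lower bound against the $n h^4$ upper bound is precisely what produces the exponent $-3/2$; I also need $n$ large enough for $zh$ to be small, but this only affects the constants for finitely many $n$.
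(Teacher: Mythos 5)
Your proof is correct, but it follows a genuinely different route from the paper on both sides of the estimate. For the upper bound, the paper approximates in the \emph{spatial} variable: it uses Lemma~\ref{lm:proj_error} with $p=1$ on a uniform mesh of $[-R,R]$ in $x$, i.e.\ a space of piecewise affine functions plus the two exponential tails, exploiting that $\msol$ is $\C^2$ in $x$ except on the single cell containing the kink at $x=r$. You instead Taylor-expand in the \emph{parameter} $r$ and take $V_n=\vect\{u_{r_k},\partial_r u_{r_k}\}$, a Hermite-type snapshot space; the bookkeeping is the same in spirit (an $O(h^4)$ smooth part plus an $O(h^3)$ kink-cell contribution to the squared error), and your space is arguably more natural in the reduced-basis context since it is built from solutions, whereas the paper's splines are problem-agnostic. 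For the lower bound, the paper diagonalizes the correlation operator $T_K$ with kernel $K(x,y)=\int_{-R}^R\msol(x)\msol(y)\,\dd[r]$, computes its eigenvalues exactly via the transcendental equations of Lemma~\ref{lm:lambda}, and invokes the identity $\ddn^2=\sum_{k>n}\sigma_k$; this yields sharp spectral information but requires the explicit eigenfunctions $\cos(a_lx)$, $\sin(b_lx)$ and the Paley--Wiener-type completeness argument in the appendix. Your second-difference argument --- balancing the exact expansion $\Vert u_{r+h}-2u_r+u_{r-h}\Vert^2=\tfrac{2}{3}z^4h^3+O(h^4)$ against the Fourier-multiplier bound $\int_\RR\Vert \dP_{V_n}v(r)\Vert^2\,\dd[r]\leqslant nh^4z^4$ and choosing $h\sim R/n$ --- is more elementary and more robust: it needs only the explicit overlap $\langle u_r,u_{r'}\rangle$ and the translation structure $u_r=u_0(\cdot-r)$, not a diagonalization, and would survive perturbations of the kernel. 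The trade-off is that the spectral route gives the full eigenvalue asymptotics (hence, in principle, matching constants and the exact decay of $\ddn$ on the bounded interval), while yours gives only the order $n^{-3/2}$. Your handling of the finitely many small $n$ (where $zh$ is not small) by adjusting constants, and the passage $\dinfn\geqslant\ddn/\sqrt{2R}$, are both fine and consistent with~\eqref{eq:ineq}.
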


The proof of Theorem~\ref{thrm_linear_comp} is presented in Section~\ref{sec:proof_thrm_linear_comp}. Next, we claim that a similar result holds true for any system of several fixed charges $\charges$.
\begin{corollary} \label{coro_linear_comp}
    The following lower bound 
    \begin{equation} \label{eq:coro:linear_comp:dn}
        c_R n^{-\frac{3}{2}} \leqslant \dinfn(\M_\charges^R, \L^2(\RR)) 
    \end{equation}
   holds with the positive constant $c_R$ from (\ref{eq:thrm:linear_comp:dn}).
\end{corollary}

\begin{proof}
    Remark that for $\pos = (r, \dots, r)$, the problem~\eqref{prob:schr_main}
    corresponds to $M = 1$,
    with a unique charge $\displaystyle z = \sum_{m = 1}^M z_m$. 
    Thus, $\dinfn(\M_z^R, \L^2(\RR))\leqslant \dinfn(\M_\charges^R, \L^2(\RR))$ and (\ref{eq:coro:linear_comp:dn}) follows from (\ref{eq:thrm:linear_comp:dn}).
\end{proof}

\begin{remark}
    It does not seem trivial to obtain a similar upper bound as the lower bound in~\eqref{eq:coro:linear_comp:dn}, nor the bounds for $\dn(\M_\charges^R, \L^2(\RR))$. It is not clear either whether the bound~\eqref{eq:coro:linear_comp:dn} is optimal. 
    However, since our point is to show that the Kolmogorov $n$-width of the set of solutions in $L^2(\mathbb{R})$ is larger than the generalized Kolmogorov $n$-width using Wasserstein metric, the lower bound (even suboptimal) is sufficient here.
\end{remark}

\subsection{On the Wasserstein transport metric: case \texorpdfstring{$\mathbb{M} = \mathcal P_2(\mathbb{R})$}{M = P2(R)} with \texorpdfstring{$\delta = W_2$}{d = W2}}\label{sec:32}

Let us now give some insight on what happens when we consider the Wasserstein distance $\delta = W_2$ with $\mathbb{M} =\mathcal P_2(\mathbb{R})$ and comment on the theoretical results.

\subsubsection{Preliminary discussion}

It holds from~\eqref{eq:W2_icdf} that barycenters can be expressed using the icdf of $m_1,\ldots, m_n$. 
there holds for any $m\in\mathcal P_2(\RR)$ and ${\bm m} = (m_1,\ldots,m_n)\in\mathcal P_2(\RR)^n$
\[
    W_2(m, \bary_{W^2}({\bm m}))
    = \left\| \icdf_{m} - \sum_{i=1}^n \lambda_i \icdf_{m_i} \right\|_{\L^2([0,1])}.
\]

It is then natural to relate the generalized Kolmogorov $n$-width of a set $\mathcal E \subset \mathcal P_2(\mathbb{R})$ associated with the Wasserstein metric with the Kolmogorov $n$-width of the set $\T = \{ \icdf_{u(z)}, ~z\in\mathbf{Z} \}$ in the Hilbert space $L^2(0,1)$. It can then be easily checked that
\[
    \dinfn(\mathcal{E}, \P_2(\RR)) \geq \dinfn(\T, \L^2([0,1])),
\]
and similarly that
\[
    \ddn(\mathcal{E}, \P_2(\RR)) \geq \ddn(\T, \L^2([0,1])). 
\]
Proving decay estimates on the quantities $\dinfn(\mathcal{E}, \P_2(\RR))$ and $\ddn(\mathcal{E}, \P_2(\RR))$ appears to be a difficult task. In the present work, we manage to prove decay estimates with respect to $n$ of $\dinfn(\T, \L^2([0,1]))$ and $\ddn(\T, \L^2([0,1]))$. We first would like to point out that the latter quantities are also the ones for which decay estimates have been proven for some conservative transport equations in~\cite{Ehrlacher2020-ef}.

\subsubsection{Main results}

In this section, we give estimates of $L^2(0,1)$ Kolmogorov $n$-widths decays of the set of icdfs of solutions, and show that they converge faster to zero as a function of $n$ than the $L^2(\mathbb{R})$ Kolmogorov $n$-widths of the original solution set.
First, in the case $M = 1$ with a fixed charge $z$, we have the following proposition.
\begin{proposition}
    The $L^2(0,1)$ Kolmogorov $n$-width of the set of $\icdf$ of solutions $\left\{ \icdf_{\msol},~ r\in\RR \right\}$
    with an unbounded position parameter $r \in \RR$ is equal to zero for $n > 1$.
\end{proposition}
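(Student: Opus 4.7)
The plan is to exploit the translation-invariance structure of the problem in the case $M=1$: when $\pos=(r)$ and $\charges=(z)$, the explicit formula \eqref{eq:mono_zeta} gives $\zeta_{r,z}=z$, which is independent of $r$, so that $u_r = \S{z}{r}$ is simply the Slater function $\S{z}{0}$ translated by $r$. This should reduce the problem to verifying that the icdf set lives in a $2$-dimensional subspace of $\L^2(0,1)$.

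More precisely, I would first observe that $\S{z}{r}(x)=\S{z}{0}(x-r)$, from which $\cdf_{u_r}(x)=\cdf_{u_0}(x-r)$ and therefore
\begin{equation*}
    \icdf_{u_r}(s) = \icdf_{u_0}(s) + r, \qquad s\in[0,1].
\end{equation*}
This identity (which is just the statement that translating a measure translates its inverse cdf) is the entire content of the argument.

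From it, I would conclude that the set $\T:=\{\icdf_{u_r}:\; r\in\RR\}$ is contained in the two-dimensional linear subspace $V:=\vect\{\icdf_{u_0},\,\mathbf{1}_{[0,1]}\}\subset \L^2(0,1)$. Consequently, for any $n\geq 2$, one may pick any $n$-dimensional subspace $V_n\supset V$ of $\L^2(0,1)$, and the orthogonal projection satisfies $\dP_{V_n}\icdf_{u_r}=\icdf_{u_r}$ for every $r\in\RR$. Taking the supremum over $r$ in the definition of $\dinfn(\T,\L^2(0,1))$ yields $\dinfn(\T,\L^2(0,1))=0$ for all $n\geq 2$, which is the claim.

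There is really no obstacle to overcome here; the only thing to make explicit in the write-up is the elementary translation identity for the icdf, and the fact that adding a constant to $\icdf_{u_0}$ keeps the resulting function in a fixed $2$-dimensional span. The proposition is essentially a sanity check showing that the Wasserstein/icdf viewpoint trivialises the single-Dirac case in sharp contrast with the $n^{-3/2}$ decay proved in Theorem~\ref{thrm_linear_comp} for the $\L^2(\RR)$ width.
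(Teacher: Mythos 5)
Your argument is correct and is exactly the paper's proof, just written out in more detail: the paper likewise notes that the solutions are translates of one another, so the icdf set lies in $\vect\{\icdf_{u_0},\mathds{1}_{(0,1)}\}$, forcing the $n$-width to vanish for $n>1$.
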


\begin{proof}
    The solutions $\msol$ are translations one to another, hence the set of $\icdf$
    of solutions is a subset of the $2$-dimensional vector space
    $\vect\{ \icdf_{u_0}, \mathds{1}_{(0, 1)} \}$.
\end{proof}

In the rest of Section~\ref{sec:32}, we consider symmetric systems with $M = 2$, i.e. $\pos = (-r, r)$ and $\charges = (z, z)$.
For simplicity of notation, we denote by $\ssol = \sol$ the ground state of the above system
and $\szeta = \zetasol$ which is given by~\eqref{eq:sym2_zetapis}.

\bigskip 

\paragraph{\bfseries Transport approximation for $r\leq R$}

We first consider the set of icdf of solutions where the position parameter $r$ is bounded by some positive $R>0$. Let us introduce the set 
\begin{equation*}
    \T_R^- := \{ \icdf_{\ssol},~ r\in[0,R] \} \subset L^2(0,1).
\end{equation*}
We show a better Kolmogorov $n$-width decay than its counterpart for the original solution set.

\begin{theorem} \label{thrm:transp_before}
	There exists a constant $C > 0$ independent of $R$ such that for all $n\in\mathbb{N}^*$,
	\begin{equation*}
		\dinfn(\T_R^-, \L^2(0, 1)) \leqslant Ce^{5\zeta_R R} n^{-\frac{5}{2}}.
	\end{equation*}
\end{theorem}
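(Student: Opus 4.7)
The plan is to construct, for each $n$, a fixed subspace $V_n \subset L^2(0,1)$ of dimension $O(n)$ such that every $\icdf_{\ssol}$ with $r\in[0,R]$ is approximated by an element of $V_n$ with $L^2$-error at most $Ce^{5\szeta_R R} n^{-5/2}$. The main tool is Lemma~\ref{lm:proj_error} applied with $p=2$, which gives the target $n^{-5/2}$ rate. The delicate point is that the formula~\eqref{eq:def_icdfsym} exhibits three different types of non-smoothness that must each be handled separately: a logarithmic blow-up at the endpoints $s = 0, 1$, $C^1$-kinks at the $r$-dependent points $s_r$ and $1-s_r$ (coming from the jumps of $\ssol'$), and large, exponentially $r$-dependent, higher derivatives on the middle branch.

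First I would exploit the fact that for $r\in[0, R]$ we have $s_r = \tfrac{1}{4}(1 + e^{-2\szeta r}) \in (1/4, 1/2]$, so the two kinks always lie inside $[1/4, 3/4]$, and on the boundary intervals $[0, 1/4]$ and $[3/4, 1]$ the icdf reduces to the logarithmic branch of~\eqref{eq:def_icdfsym}. Consequently $\icdf_{\ssol}$ restricted to $[0, 1/4]\cup[3/4,1]$ lies in the fixed four-dimensional subspace
\[
V_b := \vect\bigl\{\mathds{1}_{[0,1/4]},\; \ln(2s)\mathds{1}_{[0,1/4]},\; \mathds{1}_{[3/4,1]},\; \ln(2(1-s))\mathds{1}_{[3/4,1]}\bigr\},
\]
with $r$-dependent coefficients $\pm 1/\szeta$ and $\mp \ln(\cosh(\szeta r))/\szeta$; this contributes exactly zero error on the two boundary regions.

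On the middle interval $[1/4, 3/4]$ I would introduce a uniform mesh of width $h = 1/(2n)$ and let $V_{n,2}$ be the associated space of piecewise-quadratic polynomials (dimension $3n$). Since $\ssol$ is continuous and strictly positive, its cdf is $C^1$ with positive derivative, so $\icdf_{\ssol}$ is $C^1$ on $[0,1]$, piecewise $C^\infty$, with only a bounded jump in the second derivative at $s_r$ and $1-s_r$; in particular $\icdf'_{\ssol}$ is absolutely continuous across the kinks. Hence Lemma~\ref{lm:proj_error} applies with $p=2$ and $q=2$ (the two mesh intervals containing $s_r$ and $1-s_r$), yielding
\[
\|\icdf_{\ssol} - \dP_{V_{n,2}}\icdf_{\ssol}\|_{L^2(1/4, 3/4)} \lesssim \bigl(\|\icdf_{\ssol}^{(3)}\|_{L^\infty} + \|\icdf_{\ssol}^{(2)}\|_{L^\infty}\bigr)\,h^{5/2}.
\]
I then differentiate~\eqref{eq:def_icdfsym} on each smooth piece. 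On the two outer pieces inside $[1/4, 3/4]$, $\icdf_{\ssol}^{(k)}(s) = O(1/\szeta s^k)$, uniformly bounded because $s$ is bounded away from $0$ and $1$. On the middle piece, the chain rule through $y=e^{\szeta r}(2s-1)$ produces factors of $e^{\szeta r}$, and an explicit computation gives $\|\icdf_{\ssol}^{(2)}\|_{L^\infty}\lesssim e^{2\szeta r}/\szeta$ and $\|\icdf_{\ssol}^{(3)}\|_{L^\infty}\lesssim e^{3\szeta r}/\szeta$. Combining these with the fact that $\szeta$ is bounded between $\szeta_R$ and $2z$ on $[0,R]$, and taking $V_n := V_b \oplus V_{n,2}$ of total dimension $3n+4$, gives a uniform bound of the form $Ce^{k\szeta_R R} n^{-5/2}$; the exponent $k=5$ stated in the theorem then follows by tracking the prefactors (and is a slightly coarse upper bound). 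Absorbing the $+4$ into $n$ yields the claimed estimate on $\dinfn(\T_R^-, L^2(0,1))$.

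The main obstacle is the very first step: without splitting off the logarithmic endpoints, $\|\icdf_{\ssol}^{(2)}\|_{L^\infty(0,1)}$ is infinite, so Lemma~\ref{lm:proj_error} cannot be applied directly on $[0,1]$. Once the endpoint behaviour is captured exactly by $V_b$, the remaining work is bookkeeping—producing $L^\infty$ bounds on the $\asinh$ branch that are sharp enough to realize the exponent $e^{5\szeta_R R}$, and verifying carefully that the kink regularity hypothesis of Lemma~\ref{lm:proj_error} is satisfied.
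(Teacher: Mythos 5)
Your proposal is correct and follows essentially the same route as the paper's proof: the same fixed four-dimensional space capturing the logarithmic branches on $(0,\tfrac14)\cup(\tfrac34,1)$, the same piecewise-quadratic space on a uniform mesh of $[\tfrac14,\tfrac34]$, and the same application of Lemma~\ref{lm:proj_error} with $p=2$ and $q=2$ at the kinks $s_r$ and $1-s_r$. Your derivative bounds on the $\asinh$ branch ($e^{2\szeta r}$ and $e^{3\szeta r}$) are in fact slightly sharper than the paper's ($e^{3\szeta r}$ and $e^{5\szeta r}$), and a fortiori yield the stated $e^{5\zeta_R R}$ factor.
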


The proof of Theorem~\ref{thrm:transp_before} is postponed to Section~\ref{sec:proof_thrm_transp_before}.

\bigskip 
\paragraph{\bfseries Transport approximation for $r \geq R$} 

We now state an asymptotic result that holds for position parameters $r \geqslant R$.
Introducing the set 
\begin{equation*}
    \T_R^+ := \{ \icdf_{\ssol},~ r\geq R\} \subset L^2(0,1), 
\end{equation*}
we have the following result.
\begin{theorem} \label{thrm:transp_after}
There exists a constant $C > 0$ independent of $R$ such that for all $n\in\mathbb{N}^*$,
	\begin{equation*}
		{\rm d}_{\infty, 4}(\T_R^+, \L^2(0, 1)) \leqslant C R e^{-\frac{1}{2} \zeta_R R}.
	\end{equation*}
\end{theorem}

The proof of Theorem~\ref{thrm:transp_after} is postponed to Section~\ref{sec:proof_thrm_transp_after}.

\bigskip
\paragraph{\bfseries Transport approximation for $r \in \RR^+$}
Now consider the unbounded set of 
inverse cumulative distribution function of solutions
\begin{equation*}
    \T := \{ \icdf_{\ssol},~ r\in\mathbb{R^+} \}.
\end{equation*}
We finally state our final result on the Kolmogorov $n$-width decay of $\mathcal T$.

\begin{theorem}\label{thrm_final}
	For all $\varepsilon > 0$, there exists a constant $C_\varepsilon>0$ such that for all $n \in \mathbb{N}^*$,
	\begin{equation*}
		\dn(\T, \L^2(0, 1)) \leqslant C_\varepsilon n^{\frac{25}{\frac{21}{2} - \frac{\varepsilon}{z}} - \frac{5}{2}}.
	\end{equation*}
\end{theorem}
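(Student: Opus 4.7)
The plan is to combine Theorems~\ref{thrm:transp_before} and~\ref{thrm:transp_after} by introducing a truncation parameter $R = R(n)$ which I will let grow logarithmically in $n$. For $R>0$ to be fixed later, I would take the $(3n+8)$-dimensional trial subspace $W = V \oplus V_n \oplus V_R$ of $L^2(0,1)$, where $V\oplus V_n$ is the $(3n+4)$-dimensional space constructed in the proof of Theorem~\ref{thrm:transp_before} (built from $R$ and $n$), and $V_R$ is the $4$-dimensional space from~\eqref{eq:def_vec_VR}. By Theorem~\ref{thrm:transp_before}, the projection error onto $W$ is bounded by $Ce^{5\zeta_R R} n^{-5/2}$ uniformly for $r\in[0,R]$. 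For $r \geq R$, I would use a sharper $r$-dependent version of the bound implicit in the proof of Theorem~\ref{thrm:transp_after}: that proof in fact establishes $\|\icdf_{\ssol} - g_r^{-1}\|_{L^2(0,1/2)}^2 \leq C r^2 e^{-\zeta_r r}$, and $g_r^{-1}$ differs from a suitable element of $V_R$ only by a term proportional to $(1/\zeta_r - 1/\zeta_R)\ln(4s)$ whose $L^2$-norm is of order $e^{-2zR}$. Together these give $\|\icdf_{\ssol} - \dP_W \icdf_{\ssol}\|_{L^2(0,1)} \leq C'\, r\, e^{-\zeta_r r/2}$ for $r\geq R$.

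Splitting the $L^2$ Kolmogorov width integral at $r=R$ and using $\zeta_r \geq z$ on the tail, I would then write
\[
\dn(\T, L^2(0,1))^2 \;\leq\; C^2 R\, e^{10\zeta_R R}\, n^{-5} \;+\; \int_R^{+\infty} C'^2\, r^2\, e^{-\zeta_r r}\,\dd[r] \;\lesssim\; R\, e^{10\zeta_R R}\, n^{-5} \;+\; R^2\, e^{-zR}.
\]
Choosing $R = R(n)$ of the form $\alpha \log n$, with $\alpha = \alpha(z,\varepsilon)$ tuned appropriately, then produces a bound of the form $n^{-\gamma(\varepsilon)}$ matching the statement. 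The parameter $\varepsilon$ in the stated exponent serves to absorb (i) the logarithmic prefactor in $R$ arising from the two $R$-factors above, and (ii) the fact that $\zeta_R = z + O(e^{-zR})$ only tends to $z$: for any $\varepsilon'>0$ and $R$ sufficiently large, $e^{5\zeta_R R} \leq e^{(5z+\varepsilon')R}$, and substituting this bound into the balance gives precisely the form $\frac{25}{\frac{21}{2}-\frac{\varepsilon}{z}} - \frac{5}{2}$ in the exponent.

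The main obstacle, in my view, is the tail integral on $[R,+\infty)$: the uniform-in-$r$ bound $CR\, e^{-\zeta_R R/2}$ stated in Theorem~\ref{thrm:transp_after} is independent of $r$ and therefore \emph{not} $L^2$-integrable on $[R,+\infty)$. One must therefore revisit that proof, extract the sharper $r$-dependent estimate from its intermediate inequalities, and carefully control the residual gap between $g_r^{-1}$ and its best approximation in $V_R$ coming from the scaling difference $|1/\zeta_r - 1/\zeta_R|$, which happily is exponentially small in $R$ and does not spoil the tail decay. Once this sharpened tail estimate is in place, optimising over $R$ is a short calculation producing the stated exponent.
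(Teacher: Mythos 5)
Your proposal follows the same overall strategy as the paper's proof: approximate on the fixed finite-dimensional space $(V\oplus V_n)+V_R$, invoke Theorem~\ref{thrm:transp_before} on $r\in[0,R]$ and Theorem~\ref{thrm:transp_after} on $r\geqslant R$, and choose $R=R(n)$ logarithmic in $n$ to balance $e^{cR}n^{-5/2}$ against $e^{-c'R}$, the parameter $\varepsilon$ absorbing the polylogarithmic prefactors and the gap between $\zeta_R$ and $z$. The one genuine difference is your treatment of the parameter variable: the paper simply bounds $\sup_{r\in\RR^+}\Vert\icdf_{\ssol}-\dP_{(V\oplus V_n)+V_R}\icdf_{\ssol}\Vert_{\L^2(0,1)}$ by the maximum of the two regime estimates (i.e.\ it controls an $\L^\infty$-in-$r$ quantity), whereas you integrate the squared error over the unbounded set $\RR^+$, which forces you to replace the uniform tail bound $CRe^{-\frac12\zeta_RR}$ (indeed not square-integrable in $r$ on $[R,+\infty)$) by the $r$-dependent intermediate estimate $\int_0^{1/2}|\icdf_{\ssol}-g_r^{-1}|^2\,\dd[s]\leqslant Cr^2e^{-\szeta r}$ from the proof of Theorem~\ref{thrm:transp_after}. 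This refinement is legitimate and, if one insists on reading $\dn$ as the $\L^2$ width over $r\in\RR^+$, actually necessary, since $|\mathbf{Z}|=+\infty$ and \eqref{eq:ineq} is unavailable; the paper sidesteps the issue by working with the supremum. Two smaller points. First, the ``residual gap'' you worry about between $g_r^{-1}$ and $V_R$ does not exist: for every $r\geqslant R$ one has $g_r^{-1}=\frac{\zeta_R}{\szeta}\,g_R^{-1}+\bigl(\frac{\zeta_R}{\szeta}R-r\bigr)\mathds{1}_{(0,\frac12)}$, which lies exactly in $V_R$ as defined in \eqref{eq:def_vec_VR} --- the rescaling of the logarithmic profile is absorbed by the span coefficient and the translation by the indicator function; this is precisely why the paper may bound the projection error by $\Vert\icdf_{\ssol}-g_r^{-1}\Vert$. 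Second, if you genuinely use $e^{5\zeta_RR}\leqslant e^{(5z+\varepsilon')R}$ for large $R$ rather than the cruder bound $\zeta_R\leqslant 2z$ underlying the paper's $e^{10zR}$, your balance yields a decay rate close to $n^{-5/22}$, strictly faster than the stated $n^{-5/42}$-type rate; so your argument proves a slightly stronger bound that implies the theorem, rather than reproducing its exponent ``precisely'' as you claim.
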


The proof of Theorem~\ref{thrm_final} is postponed to Section~\ref{sec:proof_thrm_final}.

\subsection{On the mixture Wasserstein transport metric: case \texorpdfstring{$\mathbb{M} = \mathcal {\rm SM}(\mathbb{R})$}{M = P2(R)} with \texorpdfstring{$\delta = {\rm MW}_2$}{d = W2}}

We now provide theoretical results related to this mixture distance and comment on the theoretical results.

\subsubsection{Preliminary discussion}

We first specify the definitions of metric Kolmogorov widths considered in the mixture Wasserstein framework of our theoretical analysis.

\medskip

First the computation of an exact mixture Wasserstein barycenter using formula \eqref{eq:bar_mixtures} requires, for any collection of barycentric weights $\boldsymbol{\lambda}\in \Lambda_n$ and of Slater mixtures $\boldsymbol{m}\in {\rm SM}(\mathbb{R})^n$, the computation of a solution $w^*({\bm m}; {\bm \lambda})$ to the minimization problem~\eqref{eq:mMW2}. 
In our model order reduction context, this appears to be too costly from a computational point of view. 
For this reason we rather rely 
on the use of {\itshape approximate } mixture barycenters, the computation and definition of which do not require the resolution of multiple problems of the form ~\eqref{eq:mMW2}. More precisely, denoting by $\overline{\boldsymbol \lambda}:=(1/n, 1/n, \ldots, 1/n)\in \Lambda_n$, for any (possibly nonunique) solution $w^*\left({\bm m}; \overline{{\bm \lambda}}\right)$ of~\eqref{eq:mMW2}, we define (recall formula~\eqref{eq:bar_mixtures} for the exact mixture Wasserstein barycenter)
\begin{equation}\label{eq:approxBary}
        \appbarMW\left({\bm m}\right) :=
        \sum_{\kbar \in \mathbf{K}} \left(w^*({\bm m}; \overline{{\bm \lambda}})\right)_{\kbar} \barW\left(m^1_{k^1}, \dots, m^n_{k^n}\right).
\end{equation}
Note that it is easy to check that the approximate mixture barycenter remains an interpolation between the mixtures $m^1, \ldots, m^n$ in the following sense: if ${\bm \lambda} = (\delta_{n_0, i})_{1\leq i \leq n}$ for some $1\leq n_0 \leq n$, then $ \appbarMW\left({\bm m}\right) = m^{n_0}$.

\medskip

In addition, we observed in our numerical tests that restricting the set of weights $\boldsymbol{\lambda}$ to the set $\Lambda_n$ of barycentric weights is quite restrictive. Additionally, the definition of approximate mixture barycenters can easily be extended to the case where the collection of weights $\boldsymbol{\lambda}$ belongs to the larger set 
    \begin{equation}
    \Omega_n(\boldsymbol{m}) = \left\{ \bm \lambda \in \RR^n, ~ \forall \kbar \in \{1, \dots, K^1\} \times \dots\times \{1, \dots, K^n\}, ~ \sum_{i = 1}^n \frac{\lambda_i}{\zeta^i_{k^i}} > 0 \right\}.
    \end{equation}
Indeed, this requires to extend the definition of Wasserstein barycenters between a collection of Slater distributions, which we do via \eqref{eq:barySlater} using \eqref{eq:formula}, since the quantities defined in \eqref{eq:formula} are still well-defined for $\boldsymbol{\lambda}\in\Omega_n(\boldsymbol{m})$. In the following, for all $\boldsymbol{\lambda}\in \Omega_n(\boldsymbol{m})$, we denote by $\overline{\mathcal B}_{\distMW}^{\bm \lambda}({\bm m})$ the set of approximate mixture Wasserstein barycenters $\appbarMW\left({\bm m}\right)$ which can be written under the form~\eqref{eq:approxBary}.

\bigskip

We are now in position to consider two following quantities, which are related to (but different from) the metric Kolmogorov widths in the mixture Wasserstein space. We thus define the {\itshape extended approximate metric $\L^\infty$ Kolmogorov width}
    \begin{equation}
        \label{eq:kolmo_mw2}
        \dtildeinfn (\mathcal{E}, SM(\RR)) = \inf_{{\bm m}\in SM(\RR)^n}
            \sup_{z\in\mathbf{Z}}~\inf_{\bm\lambda\in \Omega_n({\bm m})} ~\inf_{b \in \overline{\mathcal B}_{\distMW}^{\bm \lambda}({\bm m})} \distMW (u(z), b).
    \end{equation}
and the {\itshape extended approximate metric $\L^2$ Kolmogorov width}
    \begin{equation*}
        \dtildedn(\mathcal{E}, SM(\RR)) := \inf_{{\bm m}\in SM(\RR)^n}
            \left(\int_{z\in\mathbf{Z}}~ ~\inf_{\bm\lambda\in \Omega_n({\bm m})} ~\inf_{b \in \overline{\mathcal B}_{\distMW}^{\bm \lambda}({\bm m})} \distMW (u(z), b)^2 \right)^{1/2}.
    \end{equation*}

\subsubsection{Main result}

In this part, we also focus on the $M = 2$ symmetric case, i.e. where $\pos = (-r, r)$ and with fixed equal
charges $\charges = (z, z)$. Once again, we denote by $\M_\charges$ the set of solutions.
In that case, the extended approximate metric Kolmogorov $n$-width relative to the mixture distance $\distMW$~\eqref{eq:mixture_distance} defined in~\eqref{eq:kolmo_mw2} is zero for $n \ge 2$.
This means that in that case, any solution can be exactly be obtained from a collection only two Slater mixtures.

\begin{theorem}\label{thrm_mixture}
    Let $\mathrm{SM}(\RR) \subset \P_2(\RR)$ be the metric space of mixtures of Slater distributions defined in~\eqref{eq:SMR} endowed
    with its metric $\distMW$ defined in~\eqref{eq:mixture_distance}.
    For all $n > 1$, there holds
    \begin{equation*}
         \dtildeinfn(\M_\charges, \mathrm{SM}(\RR))  = \; \dtildedn(\M_\charges, \mathrm{SM}(\RR)) = 0. 
    \end{equation*}
\end{theorem}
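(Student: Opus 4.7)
The plan is to show that two snapshots already suffice for exact reconstruction of every solution in $\M_\charges$ using mixture Wasserstein barycenters with weights in the extended set $\Omega_2$.

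Fix any two distinct positions $0 \leq r_1 < r_2$ and consider the snapshots $u_{r_1}, u_{r_2} \in \M_\charges$, each of the form $u_{r_i} = \tfrac{1}{2} \S{\zeta_{r_i}}{-r_i} + \tfrac{1}{2} \S{\zeta_{r_i}}{r_i}$. First, I would identify the optimal multi-marginal coupling in~\eqref{eq:mMW2} between these two mixtures. Since the intra-snapshot scale parameters coincide, the $\zeta$-contribution in~\eqref{eq:WassSlater} is independent of the coupling, and minimization reduces to the position part. Comparing the diagonal coupling $w^*_{11} = w^*_{22} = 1/2$ (matching the leftmost Slater with the leftmost, and likewise on the right) with the antidiagonal one via $(r_2 - r_1)^2 < (r_1 + r_2)^2$ shows that the diagonal coupling is optimal in $\Lambda_2$. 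This combinatorial choice is the one inherited by the extension of~\eqref{eq:bar_mixtures} to $\Omega_2$.

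With that coupling, formula~\eqref{eq:bar_mixtures} combined with the extended single-Slater barycenter formula (cf.\ Remark~\ref{rem:extended_slater_bary}) gives, for weights $\bm\lambda = (\lambda_1, \lambda_2) \in \Omega_2$, the mixture barycenter
\[
b^{\bm\lambda} = \tfrac{1}{2} \S{\zeta^*}{-r^*} + \tfrac{1}{2} \S{\zeta^*}{r^*}, \qquad r^* := \lambda_1 r_1 + \lambda_2 r_2, \qquad \frac{1}{\zeta^*} := \frac{\lambda_1}{\zeta_{r_1}} + \frac{\lambda_2}{\zeta_{r_2}}.
\]
To enforce $b^{\bm\lambda} = u_r$ for a prescribed target $r \geq 0$, it suffices to solve the $2 \times 2$ linear system $r^* = r$, $1/\zeta^* = 1/\zeta_r$ for $(\lambda_1, \lambda_2)$. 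Its determinant is $r_1/\zeta_{r_2} - r_2/\zeta_{r_1}$, which is nonzero as soon as $r_1 \zeta_{r_1} \neq r_2 \zeta_{r_2}$; moreover the resulting $\bm\lambda$ automatically satisfies the admissibility condition $\lambda_1/\zeta_{r_1} + \lambda_2/\zeta_{r_2} > 0$ defining $\Omega_2$, since this quantity equals $1/\zeta_r > 0$ by construction.

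Finally, I would establish the non-degeneracy $r_1 \zeta_{r_1} \neq r_2 \zeta_{r_2}$ by showing that $f(r) := r \zeta_r$ is strictly increasing on $[0, \infty)$. Implicit differentiation of~\eqref{eq:zeta_eq} yields $\zeta_r' = -2\zeta_r(\zeta_r - z)/(1 + 2r(\zeta_r - z))$, hence
\[
f'(r) = \zeta_r + r \zeta_r' = \frac{\zeta_r}{1 + 2r(\zeta_r - z)} > 0.
\]
Consequently $u_r = b^{\bm\lambda}$ exactly for every $r \geq 0$, which immediately gives $\dtildeinfn(\M_\charges, \SM(\RR)) = \dtildedn(\M_\charges, \SM(\RR)) = 0$ for $n \geq 2$. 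The main subtlety I anticipate is the precise interpretation of the mixture barycenter for $\bm\lambda \in \Omega_2$ (in particular, that the optimal coupling identified in $\Lambda_2$ remains the canonical one used in the extension); once this definitional point is settled, the proof is essentially two-dimensional linear algebra together with the elementary monotonicity of $r \mapsto r \zeta_r$.
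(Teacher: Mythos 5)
Your proof is correct and follows the same skeleton as the paper's: identify the diagonal coupling as the optimal one between two symmetric two-component Slater mixtures, apply the extended Slater barycenter formula componentwise, and reduce exact reconstruction of $u_r$ to a $2\times 2$ linear system in $(\lambda_1,\lambda_2)$ whose solution automatically lies in $\Omega_2$ because $\lambda_1/\zeta_{r_1}+\lambda_2/\zeta_{r_2}=1/\zeta_r>0$. The one genuine difference is your choice of base elements. The paper exploits the fact that the infimum in the width is taken over all of $\SM(\RR)^2$, not over snapshots, and simply hand-picks two symmetric mixtures with parameters $(r^1,\zeta^1)=(1,1)$ and $(r^2,\zeta^2)=(2,2)$, so the determinant of the linear system is trivially $-3/2\neq 0$ and no further analysis is needed. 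You instead insist on using two actual solutions $u_{r_1},u_{r_2}\in\M_\charges$, which forces you to prove the non-degeneracy condition $r_1\zeta_{r_1}\neq r_2\zeta_{r_2}$; your monotonicity argument for $r\mapsto r\zeta_r$ via implicit differentiation of~\eqref{eq:zeta_eq} is correct (the denominator $1+2r(\zeta_r-z)\geq 1$ since $\zeta_r\geq z$). This extra work buys a slightly stronger and practically more relevant statement — that two snapshots of the solution manifold itself suffice, which is what the greedy algorithm actually uses — at the cost of an additional lemma the paper avoids. One small point to tidy up: with $r_1=0$ the strict inequality $(r_2-r_1)^2<(r_1+r_2)^2$ fails (it requires $r_1r_2>0$), so the optimal coupling is not unique there; you should simply take $0<r_1<r_2$ (the degenerate case is harmless anyway since the two components of $u_0$ coincide). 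Your closing caveat about the interpretation of the barycenter for $\bm\lambda\in\Omega_2$ is well taken, but the paper handles it exactly as you do: the coupling is computed once from the two-marginal problem (where, for $N=2$, it is independent of $\bm\lambda\in\Lambda_2$) and then frozen in the extension.
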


The proof of Theorem~\ref{thrm_mixture} is postponed to Section~\ref{sec:proof_thrm_mixture}.

\subsection{Link with other types of Kolmogorov widths}

We now compare the proposed extended metric Kolmogorov widths with two other notions of generalized Kolmogorov widths.

\subsubsection{Link with nonlinear Kolmogorov widths}

Let us point out that the metric Kolmogorov width $    \dinfn(\mathcal{E}, \mathcal P_2(\mathbb{R}))$ can be related to nonlinear Kolmogorov widths~\cite{temlyakov1998nonlinear,devore1998nonlinear}. Indeed this quantity can be rewritten as
\begin{equation}\label{eq:nonlinear}
\mathop{\inf}_{\boldsymbol{m} \in \mathbb{M}^n} \sup_{z\in\mathbf{Z}} ~ \delta\left( u(z), D_{\boldsymbol{m}} \circ C_{\boldsymbol{m}}(u(z)) \right),
\end{equation}
with $\mathbb{M} = \mathcal P_2(\mathbb{R})$, $\delta  = W_2$, and where for all $\boldsymbol{m}\in \mathbb{M}^n$, $D_{\boldsymbol{m}}$ refers to a particular decoder map and $C_{\boldsymbol{m}}$ a particular encoder map defined as
$$
D_{\boldsymbol{m}}: \Lambda_n \ni \boldsymbol{\lambda} \mapsto \bary_{W^2}({\bm m})\in \mathcal P_2(\mathbb{R})
\quad \mbox{ and } \quad
C_{\boldsymbol{m}}: \mathcal P_2(\mathbb{R}) \ni u \mapsto \mathop{\rm argmin}_{\boldsymbol{\lambda} \in \Lambda_n} W_2(u, \bary_{W^2}({\bm m}))\in \Lambda_n.
$$

However, in the case of  $   \dtildeinfn (\mathcal{E}, {\rm SM}(\RR)) $, the situation is more subtle since there is in general no uniqueness of approximate mixture Wasserstein barycenters. It is therefore not obvious to rewrite $\dtildeinfn (\mathcal{E}, {\rm SM}(\RR))$ under the form (\ref{eq:nonlinear}) with $\mathbb{M} = {\rm SM}(\RR)$, $\delta  = MW_2$ for some decoder maps $D_{\boldsymbol{m}}$ and encoder maps $C_{\boldsymbol{m}}$ parametrized by $\boldsymbol{m}\in \mathbb{M}^n$.

\subsubsection{Link with Kolmogorov $(n, m)$-
width}

The extended metric Kolmogorov width can also be linked to 
the Kolmogorov $(n,m)$-width introduced in~\cite{rim2023manifold} as follows.
Denoting by $\mathbb{T}$ the set of bijective maps from $\mathbb{R}$ to $\mathbb{R}$, the Kolmogorov $(n,m)$-width then reads as:
$$
d_{n,m}(\mathcal{E}, \mathbb{H})
            := \inf_{\substack{V_n\subset \mathbb{H}\\\dim V_n=n\\
            W_m \subset \mathbb{T}\\
            \dim\; {\rm Span}\{W_m\}}=m\\
            }\sup_{z\in \mathbf{Z}}~ \inf_{\substack{v_n\in V_n \\ T_m \in {\rm Span}\{W_m\} \cap \mathbb{T}}} \Vert u(z)-v_n\circ T_m^{-1}\Vert.
$$

In the present setting, it is more convenient to measure errors with respect to the Wasserstein metric. This is our motivation for introducing the quantity
$$
d_{1,m,W_2}(\mathcal{E}, \mathcal P_2(\mathbb{R}))
            := \inf_{\substack{v \in \mathcal P_2(\mathbb{R})\\
            W_m\subset \mathbb{T}\\
            \dim \; {\rm Span}\{W_m\}=m\\
            }}\sup_{z\in \mathbf{Z}}~ \inf_{T_m \in \rm Span }\{W_m\} \cap \mathbb{T} W_2(u(z), v\circ T_m^{-1}).
$$
In the case when $\boldsymbol{m} = (m_1, \ldots, m_n)\in \mathcal P_2(\mathbb{R})$ is a family of $n$ probability measures which are absolutely continuous with respect to the Lebesgue measure and the support of which is equal to the whole space $\mathbb{R}$, then it holds that, for all $\boldsymbol{\lambda} = (\lambda_1, \ldots, \lambda_n)\in \Lambda_n$,  
$$
\bary_{W_2}({\boldsymbol m}) =  \left( \sum_{i=1}^n \lambda_i T_i \right) \# m_1,
$$
where for all $1\leq i \leq n$, $T_i$ denotes the optimal transport map between $m_1$ and $m_i$. Thus, in the case of the exact Wasserstein metric, if the probability measures are regular enough and have full support in~$\mathbb{R}$, $\dinfn(\mathcal{E}, \mathcal P_2(\mathbb{R}))$ is equal to $d_{1,n,W_2}(\mathcal{E}, \mathcal P_2(\mathbb{R}))$. However, when the probability measures are not regular enough, or in the case of the mixture Wasserstein metric, the existence of optimal transport maps is no longer guaranteed, which makes difficult to draw a link between the Kolmogorov
width considered in the present work, and the $(n,m)$-widths of~\cite{rim2023manifold}.

\section{Nonlinear reduced basis method}
\label{sec:nonlinearRB}

Motivated by the fast decay of the generalized Kolmogorov $n$-width for the Wasserstein mixture distance, we now propose a nonlinear reduced basis method for this problem. The method, as is common in reduced order modeling, is based on an 
offline phase followed by an online phase. In the
offline phase a few representative snapshots are selected thanks to a greedy algorithm. In the online phase, for any new set of parameters, the energy of the system is minimized over the set of barycenters of selected snapshots, using a quasi-Newton minimization algorithm started at several initial points. Let us emphasize here that, despite the fact that the theoretical results stated in Section~\ref{sec:kolmogorov} only hold for the one-dimensional toy problem introduced in Section~\ref{sec:modelproblem}, the numerical strategy presented in this section can be used to build nonlinear reduced order models for more general problems.

\subsection{Greedy algorithm} \label{sec:greedy}

We first present the greedy algorithm used in the offline phase.
Let $\charges \in \left(\RR_+^*\right)^M$ be fixed positive charges, $\M^I_\charges = \{ \sol, ~ \pos \in I^M \}$ with $I \subset \RR$ an interval
be a set of solutions of~\eqref{prob:schr_main} and $\M_{tr} \subset \M^I_\charges$ a finite  training set of already computed solutions
called snapshots.
The aim here is to select the most representative snapshots in $ \M_{tr}$, so that any solution $u \in \M^I_\charges$ can be efficiently approximated with only a few snapshots.
The main idea in this greedy algorithm is to select at each iteration the snapshot in $\M_{tr}$ the approximation of which as a mixture barycenter of previously selected snapshots leads to the highest error.
Since the proposed algorithm is generic to any training set where the elements can be represented by mixtures equipped with a mixture distance, we use the notation $m$ for the elements in $\M_{tr}$ instead of~$u$.
Recall from Section~\ref{sec:OT} that we write a mixture as $m = \displaystyle \sum_{k = 1}^K \pi_k m_k$, so for the rest of this section, we denote an element of $\M_{tr}$ as $m$.
In our case, these mixtures are solutions to problem~\eqref{prob:schr_main}, which means that their elements, denoted by $m_k$ are Slater functions with  a scale parameter $\zeta$ independent of $k$ and position parameter $r_k$.
In the following, mixtures written as $m^i$ where $i$ is an integer follow the same rules of notation, their parameters being $\left(\pi_{k^i}^i\right)_{k^i=1}^{K^i}$ for the weights, $\zeta^i$ for the common scale parameters and $\left(r_{k^i}^i\right)_{k^i=1}^{K^i}$ for the position parameters.
The proposed greedy algorithm is presented below (Algorithm~\ref{algorithm:greedy}).

\begin{algorithm}
    \caption{\textsc{Greedy algorithm}}
    \label{algorithm:greedy}
    \begin{algorithmic}
        \STATE {\bf Input:} $\M_{tr}$, training set; $N$, number of elements to select  
            \STATE{Select $m^1$ and $m^2$ solutions to $\displaystyle \mathop{\argmax}_{(m^1, m^2) \in \M_{tr}} \distMW(m^1, m^2)^2$.}
            \STATE{$\mathcal{B} := \{m^1, m^2 \}$}
        \FOR{$n = 2, \dots, N-1$}
            \STATE{
                Select
                \begin{equation}
                    \label{eq:greedy_eq}
                    m^{n+1} \in \argmax_{m \in \M_{tr}} \inf_{\bm \lambda \in \Omega_n(m^1,\ldots, m^n)} \distMW\left(m, \appbarMW(m^1, \dots, m^n)\right)^2,
                \end{equation}
                where
                    \begin{equation*}
                        \Omega_n(m^1,\ldots, m^n) = \left\{ \bm\lambda \in \RR^n, ~
                        \sum_{i=1}^n \frac{\lambda_i}{\zeta^i} > 0 \right\}.
                    \end{equation*}
            } 
            \STATE{$\mathcal{B} = \mathcal{B} \cup \{ m^{n+1} \}$}
        \ENDFOR
        \STATE {\bf Output:} Reduced basis $\mathcal{B} \subset \M_{tr}$ 
    \end{algorithmic}
\end{algorithm}

The keystone of Algorithm~\ref{algorithm:greedy} is the resolution of problem~\eqref{eq:greedy_eq}, and more precisely for $m^1,\ldots, m^n\in\M_{tr}$ and $m\in \M_{tr}$, the resolution of the following minimization problem
\begin{equation} \label{prob:min_greedy}
    \inf_{\bm \lambda \in \Omega_n(m^1,\ldots, m^n)}\distMW\left(m, \appbarMW(m^1, \dots, m^n)\right)^2.
\end{equation}
In practice, we start by solving problem~\eqref{eq:mMW2} with ${\bm \lambda} = \overline{{\bm \lambda}} = (1/n, \ldots, 1/n)$ to obtain the barycenters weights $w^*({\bm m}, \overline{{\bm \lambda}})$ appearing in~\eqref{eq:approxBary} for the calculation of $\appbarMW(m^1, \dots, m^n)$.
It can be computed using any linear programming solver as problem~\eqref{eq:mMW2} is a linear problem with linear constraints. In the sequel, for the sake of simplicity, we denote by $w^* := w^*({\bm m}, \overline{{\bm \lambda}})$.
Note that in the representation of the barycenters~\eqref{eq:bar_mixtures}, one can in fact only consider the indices $\kbar = (k^1,k^2,\ldots,k^n) \in \mathbf{K} \subset \{1, \dots, K^1\} \times \dots \times \{1, \dots, K^n\}$ for which $\wstar_{\kbar}$ is non zero (see Remark~\ref{rem:sparsity})  to reduce the dimensionality of the problem.

Now, by the definition of the distance $\distMW$, we can say that
\begin{equation*}
    \distMW\left(m, \appbarMW(m^1, \dots, m^n)\right)^2
    = \min_{w \in \Pi(\bm \pi, w^*)} \sum_{\kbar \in \mathbf{K}} \sum_{k = 1}^K
    w_{\kbar, k} \distW\left(m_k, \barW(m^1_{k^1}, \dots, m^n_{k^n})\right)^2,
\end{equation*}
where the weights $w$ are matrices with non-negative terms indexed by $\kbar \in \mathbf{K}$ and $k \in \{1, \dots, K\}$ in the set
\begin{equation}
    \label{eq:forthevertices}
   \Pi(\bm \pi, w^*) := 
   \left\{ 
   w \in (\RR_+)^{|\mathbf{K}| \times K}, \;
   \forall k \in \{1, \dots, K\}, ~ \sum_{\kbar \in \mathbf{K}} w_{\kbar, k} = \pi_k,
    \forall \kbar \in \mathbf{K}, ~ \sum_{k = 1}^{K} w_{\kbar, k} = w^*_{\kbar}
    \right\}.
\end{equation}
Note that the minimization set $\Pi(\bm \pi, \wstar)$ does not depend on the parameter $\bm \lambda$ as the previously computed weights of barycenter $\wstar$ do not depend on $\bm \lambda$ either. We also have, recalling that the parameters for the mixture $m$ are $r_k$ and $\zeta$
\begin{equation*}
    \distW\left(m_k, \barW(m^1_{k^1}, \dots, m^n_{k^n})\right)^2 = \left( r_k - \sum_{i = 1}^n\lambda_i r^i_{k^i} \right)^2
        + 2\left( \frac{1}{\zeta} - \sum_{i = 1}^n \frac{\lambda_i}{\zeta^i} \right)^2
        = {\bm\lambda}^\intercal A_{\kbar} {\bm\lambda} + b_{\kbar, k}^\intercal {\bm\lambda} + c_k,
\end{equation*}
where
\begin{equation*}
    A_{\kbar} = \mathbf{r}_{\kbar}^\intercal \mathbf{r}_{\kbar} + 2\bm\zeta^\intercal \bm\zeta,
    \qquad
    b_{\kbar, k} = -2\left( r_k \mathbf{r}_{\kbar} + \frac{2}{\zeta} \bm\zeta \right),
    \qquad \text{and} \qquad
    c_k = r_k^2 + \frac{2}{\zeta^2},
\end{equation*}
with $\mathbf{r}_{\kbar} = \left( r^1_{k^1}, \dots, r^n_{k^n} \right)$ and $\bm\zeta = \left( \frac{1}{\zeta^1}, \dots, \frac{1}{\zeta^n} \right)$.
In particular the matrices $A_{\kbar}$ are non-negative for any $\kbar \in \mathbf{K}$.
Hence, problem~\eqref{prob:min_greedy} reduces to
\begin{equation} \label{prob:min_greedy:double_quad_form}
    \inf_{\bm \lambda \in \Omega_n(m^1,\ldots, m^n)} \min_{w \in \Pi(\bm\pi, w^*)}
    {\bm\lambda}^\intercal A {\bm\lambda} + b_w^\intercal {\bm\lambda} + c
    =  \min_{w \in \Pi(\bm\pi, w^*)} \inf_{\bm \lambda \in \Omega_n(m^1,\ldots, m^n)}
    {\bm\lambda}^\intercal A {\bm\lambda} + b_w^\intercal {\bm\lambda} + c,
\end{equation}
where 
\begin{equation} \label{eq:coeff_proj_mini}
    A = \sum_{\kbar \in \mathbf{K}} w^*_{\kbar} A_{\kbar},
    \qquad
    b_w = \sum_{\kbar \in \mathbf{K}} \sum_{k = 1}^K w_{\kbar, k} b_{\kbar, k},
    \qquad\text{and}\qquad
    c = \sum_{k = 1}^K \pi_k c_k.
\end{equation}
The matrix $A$ is also non-negative as a sum of non-negative matrices.
Note that since the matrix $A$ does not depend neither on $\bm \lambda$ and on the mixture $m$, we can indeed compute $A$ at each update of $\mathcal{B}$, just like the weights $\wstar$. In all our numerical tests, we have numerically checked that the matrix $A$ is in fact positive definite.
Then, the solution to the minimization problem
\begin{equation*}
    \min_{\bm \lambda \in \RR^n} {\bm\lambda}^\intercal A {\bm\lambda} + b_w^\intercal {\bm\lambda} + c
\end{equation*}
is  ${\bm \lambda}_w = -\frac{1}{2}A^{-1} b_w$,
and we can also check here a posteriori that the solution ${\bm \lambda}_w\in \Omega_n(\boldsymbol{m})$, so that the solution of $\inf_{\bm \lambda \in \Omega_n(\boldsymbol{m})}
    {\bm\lambda}^\intercal A {\bm\lambda} + b_w^\intercal {\bm\lambda} + c$ is also ${\bm \lambda}_w$, which is always the case in the tested examples. If however it turned out not to be the case, since $\Omega_n(\boldsymbol{m})$ is a convex set, it is possible to directly solve the minimization problem on $\Omega_n$ using quadratic programming. In the case when $A$ might not be invertible (which never occured in the numerical tests performed in this work), the inverse of $A$ would naturally be replaced by its pseudo-inverse in the expression of ${\bm \lambda}_w$ above, and in the expressions below.

Hence, by putting ${\bm \lambda}_w$ back in problem~\eqref{prob:min_greedy:double_quad_form},
we have that problem~\eqref{prob:min_greedy} is equivalent to
\begin{equation*}
    \min_{w \in \Pi(\bm\pi, \wstar)} - \frac{1}{4} b_w^\intercal A^{-1} b_w + c \; = \min_{w \in \Pi(\bm\pi, \wstar)} w^\intercal \left(- \frac{1}{4} B^\intercal A^{-1} B \right) w + c,
\end{equation*}
by choosing a vectorization for the weights $w$ and where $B$ is such that $Bw = b_w$,
which is a concave quadratic minimization problem because the matrix $- \frac{1}{4} B^\intercal A^{-1} B$ is negative since $A$ is positive.
The matrix $- \frac{1}{4} B^\intercal A^{-1} B$ of this problem has a size $K |\mathbf{K}|$. For a hint on the size $|\mathbf{K}|$, see Remark~\ref{rem:sparsity}.
The solution of the problem is in fact a vertex of the polytope $\Pi(\bm\pi, \wstar)$, thanks to the convexity of the polytope and the concavity of the problem~\cite{Floudas1995-ue}.
We summarize the whole procedure to solve problem~\eqref{eq:greedy_eq} in Algorithm~\ref{algorithm:offline_minimization} below.

\begin{algorithm}
    \caption{\textsc{Offline projection minimization}}
    \label{algorithm:offline_minimization}
    \begin{algorithmic}
        \STATE {\bf Input:} $\mathcal{B} = \{m^1, \dots, m^n\}$, selected elements
        \STATE{Compute $\wstar$ as in problem~\eqref{eq:mMW2}.}
        \STATE{Compute the matrix $A$ as in~\eqref{eq:coeff_proj_mini}, check that it is positive definite, and compute $A^{-1}$.}
        \STATE{
            Select
            \begin{equation*}
                m^{n+1} \in \argmin_{m \in \M_{tr}} \min_{w \in \Pi(\bm\pi, \wstar)} - \frac{1}{4} b_w^\intercal A^{-1} b_w + c,
            \end{equation*}
            where $b_w$ and $c$ are given in~\eqref{eq:coeff_proj_mini}.
        } 
        \STATE {\bf Output:} $m^{n+1}$
    \end{algorithmic}
\end{algorithm}

\begin{remark}
    In our implementation, we took advantage of the concave setting of the problem and searched the solution directly among the vertices of the polytope $\Pi(\bm\pi, \wstar)$ to ensure global optimality.
    In practice, global optimization packages such as Gurobi could be used.
\end{remark}

\begin{remark}
    Algorithm~\ref{algorithm:greedy} is in fact more general and can be used wherever the elements of the training set can be represented by mixtures. For example, we can consider a setting where the solutions are Slater mixtures with different scale parameters $\zeta$.
    In this scenario, the set of admissible weights for the barycenters become a bit more complex and reads
    \begin{equation*}
        \Omega_n(\boldsymbol{m}) = \left\{ \bm\lambda \in \RR^n, ~\forall \kbar \in \{1, \dots, K^1\} \times \dots \times \{1, \dots, K^n\}, ~ \sum_{i=1}^n \frac{\lambda_i}{\zeta^i_{k^i}} > 0 \right\}.
    \end{equation*}
    More generally, one can consider any mixtures for which a Wasserstein mixture distance is well-defined, as presented in~\cite{Dusson2023-ah}, which includes e.g. Gaussian mixtures, 
     upon modifying the set $\Omega_n(\boldsymbol{m})$ with the correct parameters of the distributions to ensure admissibility of barycenters.
\end{remark}

\subsection{Online algorithm}

Once the reduced basis is computed, we want to efficiently compute approximations of solutions, given a new position for the nuclei. 
Since the projection minimization algorithm used in the offline phase requires  a high-fidelity estimate of the exact solution, it is not a viable option for the online phase.
 Here we instead take advantage of the structure of our problem, which is an energy minimization problem~\eqref{eq:energy_min}, and we minimize the energy of the new system over the set of barycenters of the elements in the reduced basis.
More precisely, assume that we selected $N$ mixtures solutions $m^1, \dots, m^N$ in the offline phase and we want to obtain an approximation to the solution with molecular parameters $\pos$.
We consider the following optimization problem 
\begin{equation}
    \label{eq:minEonline}
     \inf_{\bm \lambda \in \Omega_N(m^1,\ldots,m^N)} \frac{\Esol\left( \appbarMW(m^1, \dots, m^N) \right)}{\left\|\appbarMW(m^1, \dots, m^N)\right\|^2_{L^2(\mathbb{R})} }, 
\end{equation} 
where $\Omega_N(m^1,\ldots,m^N)$ is the extended set of admissible barycenters, as in the greedy algorithm.
For clarity, we write from now on $\Esol(\bm \lambda)$ instead of $ \frac{\Esol\left( \appbarMW(m^1, \dots, m^N) \right)}{\left\|\appbarMW(m^1, \dots, m^N)\right\|^2_{L^2(\mathbb{R})}}$.
Note that the energy functional can be easily computed with the following formula 
\begin{align}\label{eq:Elambda}
        & \Esol(\bm \lambda) = \left[ {\sum_{\kbar \in \bm K}\sum_{\lbar \in \bm K} \wstar_{\kbar} \wstar_{\lbar} (1 + \zetabar|\rbar - \rbar[\lbar]|) e^{- \zetabar|\rbar - \rbar[\lbar]|}}  \right]^{-1} . \\
        & \left( \frac{\zetabar^2}{2} \sum_{\kbar \in \bm K}\sum_{\lbar \in \bm K} \wstar_{\kbar} \wstar_{\lbar} (1 - \zetabar|\rbar - \rbar[\lbar]|) e^{- \zetabar|\rbar - \rbar[\lbar]|} 
        -  \zetabar \sum_{m = 1}^M z_m \sum_{\kbar \in \bm K} \wstar_{\kbar} e^{- \zetabar|\rbar - r_m|} \right) \nonumber
   \end{align}
where $\zetabar = \left[ \sum_{i = 1}^N \frac{\lambda_i}{\zeta^i} \right]^{-1}$ is the scale parameter and $\rbar = \sum_{i = 1}^N \lambda_i r^i_{k^i}$ is the position parameter of a Slater component of a barycenter with weights $\bm \lambda$, and $w^*$ is the solution to problem~\eqref{eq:mMW2} for the reduced basis, and can be computed offline.

\begin{remark}
Let us point out that the minimization problem~(\ref{eq:minEonline}) is in general non-convex. Numerical solutions obtained by the minimization algorithms used here then depends on the initialization. 
\end{remark}

Since the energy functional as a function of $\bm \lambda$ is nonconvex, and in practice exhibits many local minima, 
solving problem~\eqref{eq:minEonline} requires to use a global optimization algorithm, preferably very robust to ensure that the global minimizer is found, to guarantee repeatability of the results.
The natural optimization procedure is detailed here, and detailed in Algorithm~\ref{algorithm:online}. We use a quasi-Newton minimization algorithm (LBFGS) with evenly distributed starting points using a Sobol sequence on a representative set $B_N = [-B, B]^N \cap \Omega_N$ of values of $\bm \lambda$. 

\begin{algorithm}
    \caption{\textsc{Online optimization}}
    \label{algorithm:online}
    \begin{algorithmic}
        \STATE {\bf Input: }Reduced basis $m^1, \dots, m^N$, $B_N = [-B, B]^N \cap \Omega_N(m^1,\ldots, m^N)$, starting points $\bm \lambda_1, \dots, \bm \lambda_L$ in $B_N$
        \FOR{$l = 1, \ldots, L$ }
            \STATE{Compute  $\bm \lambda^*_l, E^*_l$ minimizer and energy solution found by optimizing $\Esol(\bm \lambda)$ for $\bm \lambda\in\Omega_N(m^1,\ldots, m^N)$ with starting point $\bm \lambda_l$ with a LBFGS algorithm} 
        \ENDFOR
        \STATE {\bf Output:} Minimizer $\bm \lambda^* = \argmin_{l=1,\ldots,L} E^*_l$
    \end{algorithmic}
\end{algorithm}

Note that the LBFGS algorithm requires the explicit computation of the gradient of the energy, which can easily be computed from formula~\eqref{eq:Elambda}. Also, to ensure that a solution in the constraint set $\Omega_N(m^1,\ldots, m^N)$ is found, points outside of $\Omega_N(m^1,\ldots, m^N)$ are penalized by
the function 
$\bm \lambda \longmapsto C + [\zetabar]^{-10}$,
where $C$ is a large positive constant to ensure a return in the domain $\Omega_N\Omega_N(m^1,\ldots, m^N)$ if a point outside of the constraint set is reached. Note that as the energy function explodes to $+\infty$ at the border of the domain, this is unlikely to happen.

\begin{remark} \label{remark:smoothing}
    To avoid failures in line searches in the LBFGS algorithm caused by the low regularity of the function $\Esol$, we actually consider a smoothed version of $\Esol$.
    To do so, we replace the absolute value (and its derivative the sign function) responsible for the low regularity in a small interval $[-\varepsilon, \varepsilon]$ with $\varepsilon > 0$, by a cosine function $x \longmapsto - \frac{2\varepsilon}{\pi} \cos\left(\frac{\pi}{2\varepsilon}x\right) + \varepsilon$. This new function is of class $\mathcal C^2$ on $\RR$. 
\end{remark}

\section{Numerical results}
\label{sec:num}

\begin{figure} \label{fig:example_solutions}
    \centering
    \includegraphics[width=.8\textwidth]{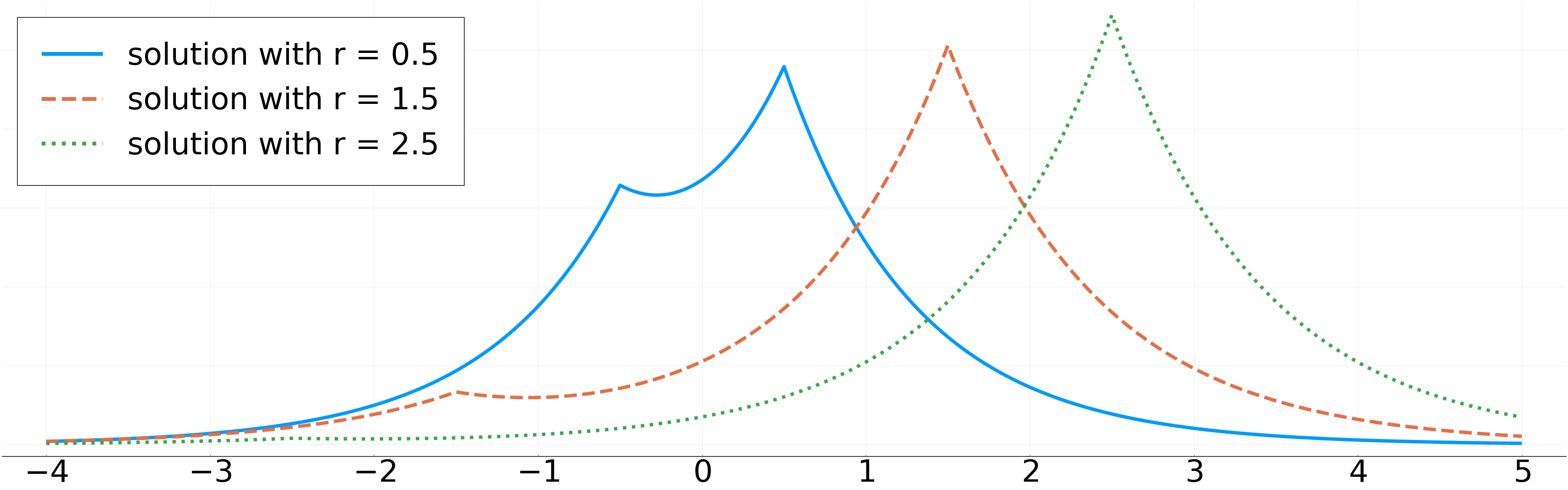}
    \vspace*{8pt}
    \caption{Three example solutions in $\M_{tr}$.}
\end{figure}

In this section, we present the numerical results obtained with the offline and online algorithms presented above. 
The code for generating the figures in the following can be found at \url{https://github.com/dussong/NonLinearReducedBasisOT}.
We focus on a system with two nuclei, i.e. $M=2$, and with charges $\charges = (0.8,1.1)$ and $\pos = (-r, r)$ for $r\in \RR_+$. For the training set, the $r$'s
are equally distributed on the interval $[0.5, 3]$ with $\sharp\M_{tr} = 251$. On Figure~\ref{fig:example_solutions}, we plot the exact solutions called snapshots for three examples, namely $r = 0.5, 1.5, 2.5$.

\subsection{Offline phase}
\label{sec:num:offline}

\begin{figure}
    \centering
    \includegraphics[width=.8\textwidth]{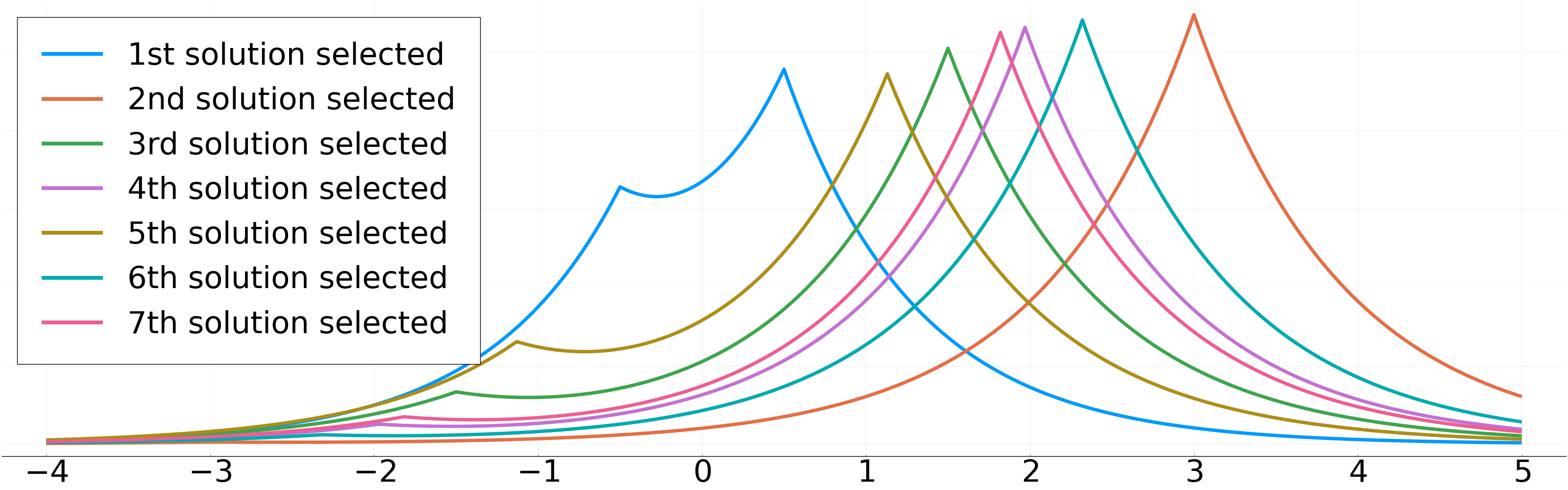}
    \vspace*{8pt}
    \caption{First seven elements selected in the reduced basis in the offline phase.}
    \label{fig:basis7}
\end{figure}

\begin{figure}
    \centering
    \includegraphics[width=0.6\textwidth]{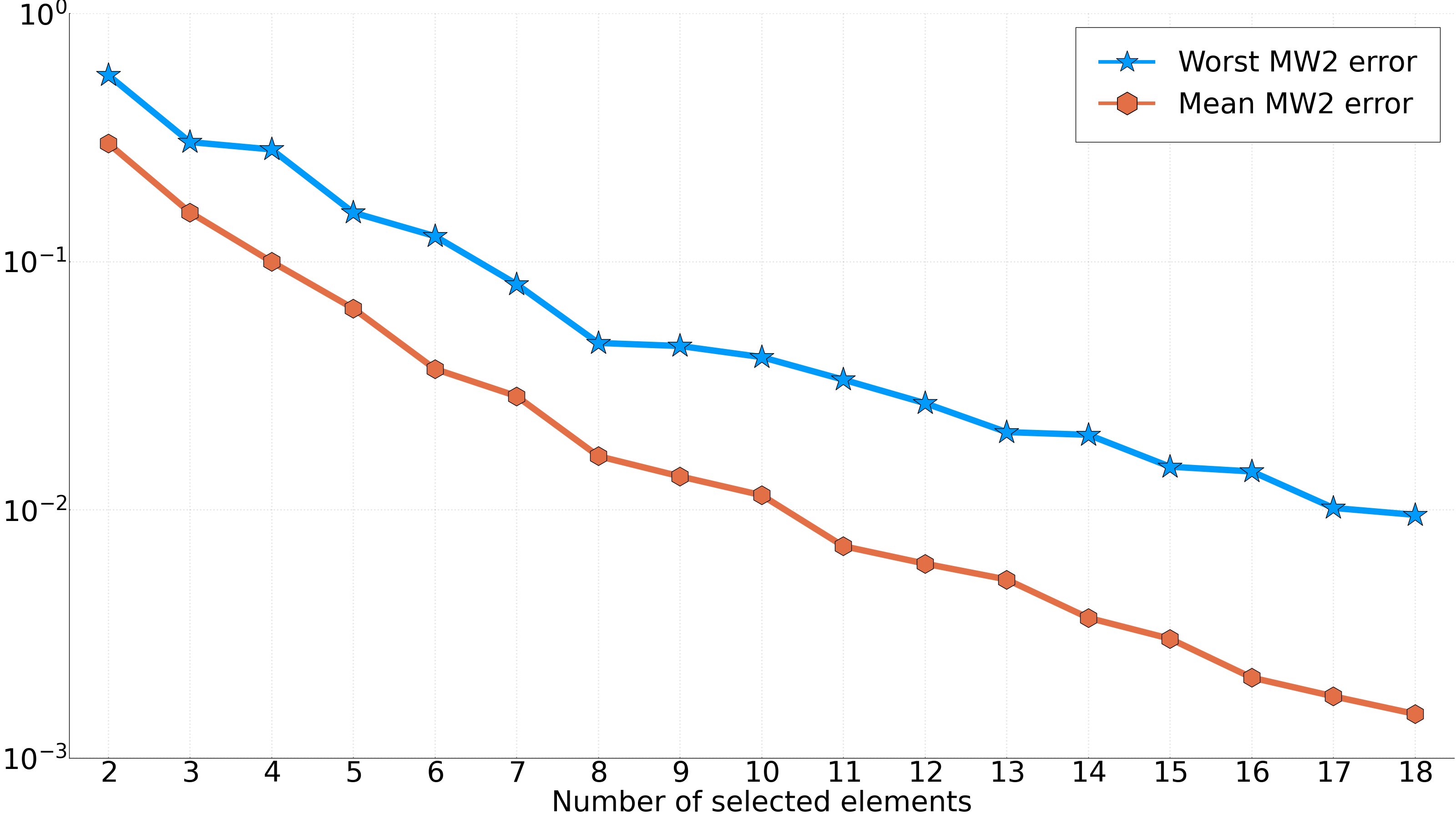}
    \vspace*{8pt}
    \caption{Decay of the projection error in the offline phase.}
    \label{fig:proj_decay}
\end{figure}

\begin{figure}[!htb]
    \centering
    \includegraphics[width=.6\textwidth]{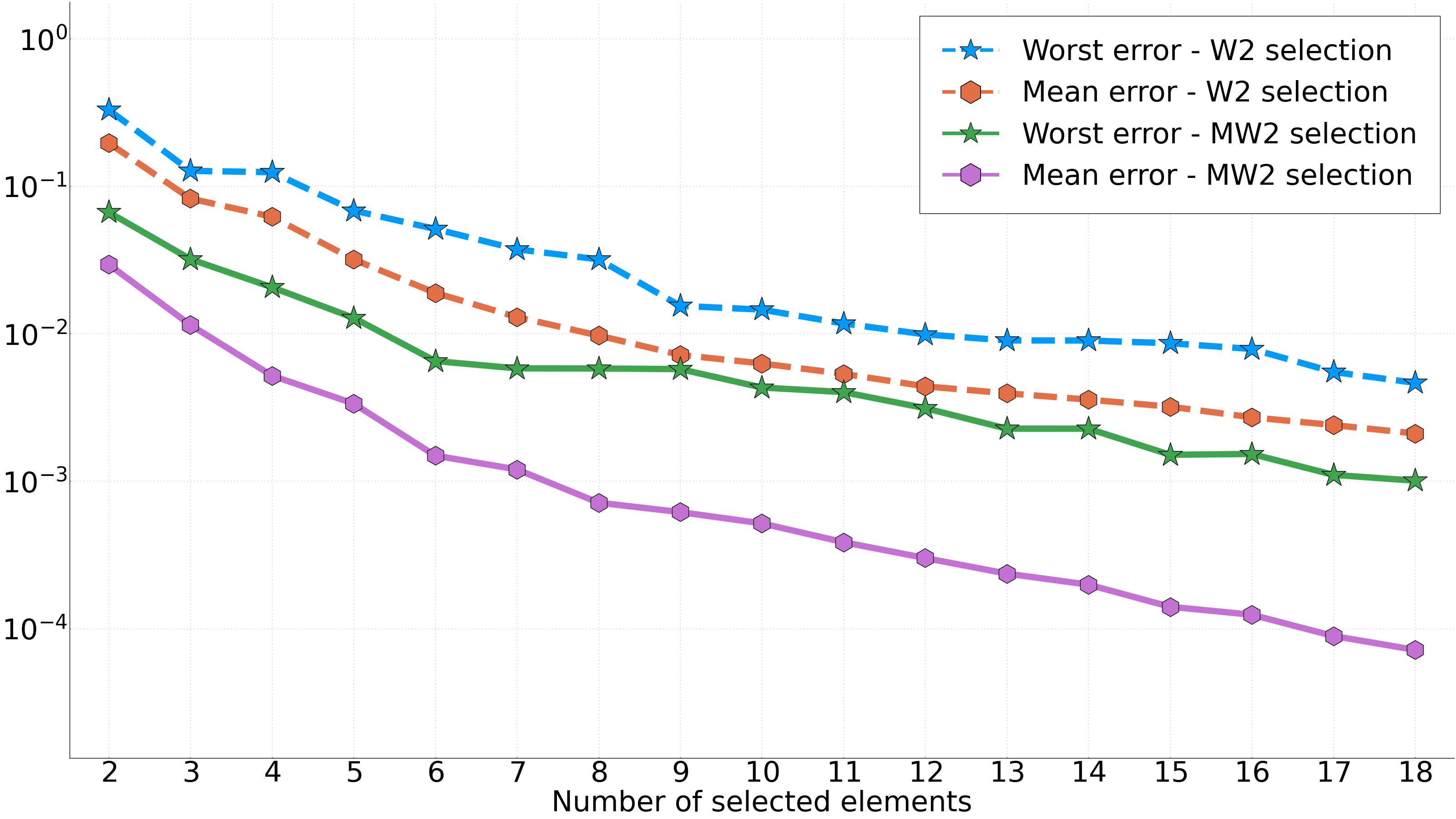}
    \vspace*{8pt}
    \caption{Decay of the projection error in $\distW$-distance in the offline phase for $\distMW$-distance and $\distW$-distance greedy selection.}
    \label{fig:proj_decay_w2}
\end{figure}

\begin{figure}
    \centering
    \includegraphics[width=0.35\textwidth]{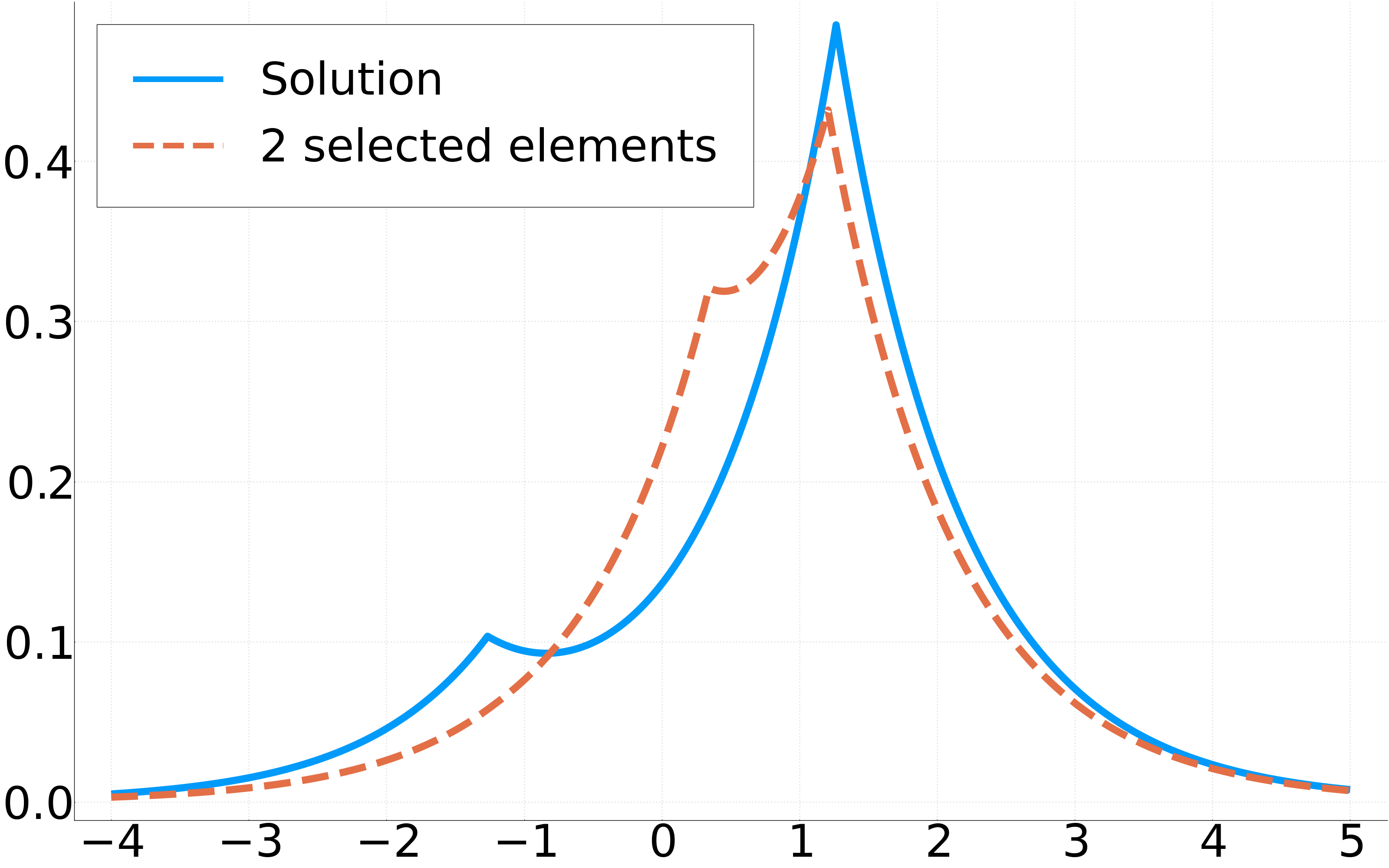}
    \includegraphics[width=0.35\textwidth]{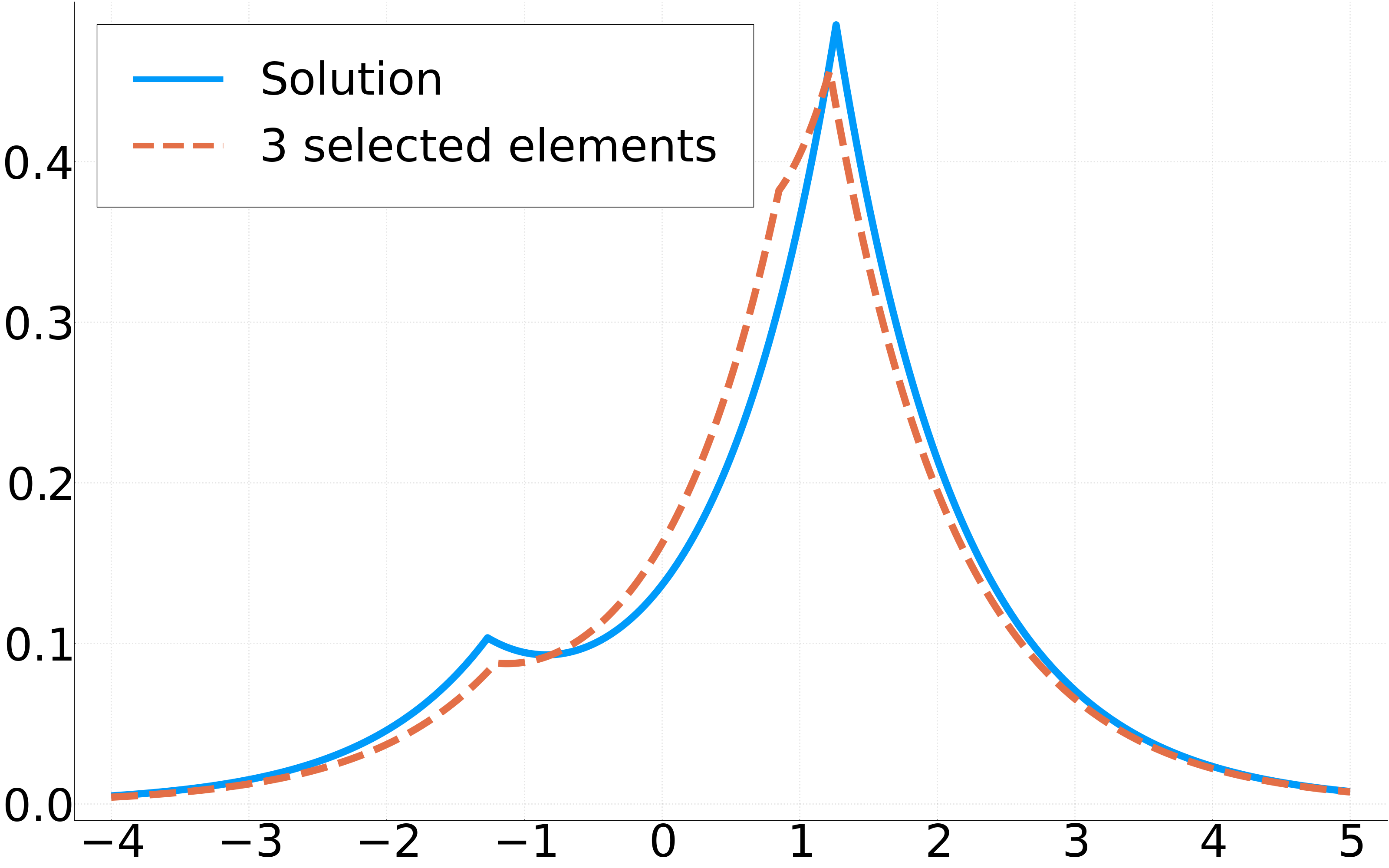}
    \includegraphics[width=0.35\textwidth]{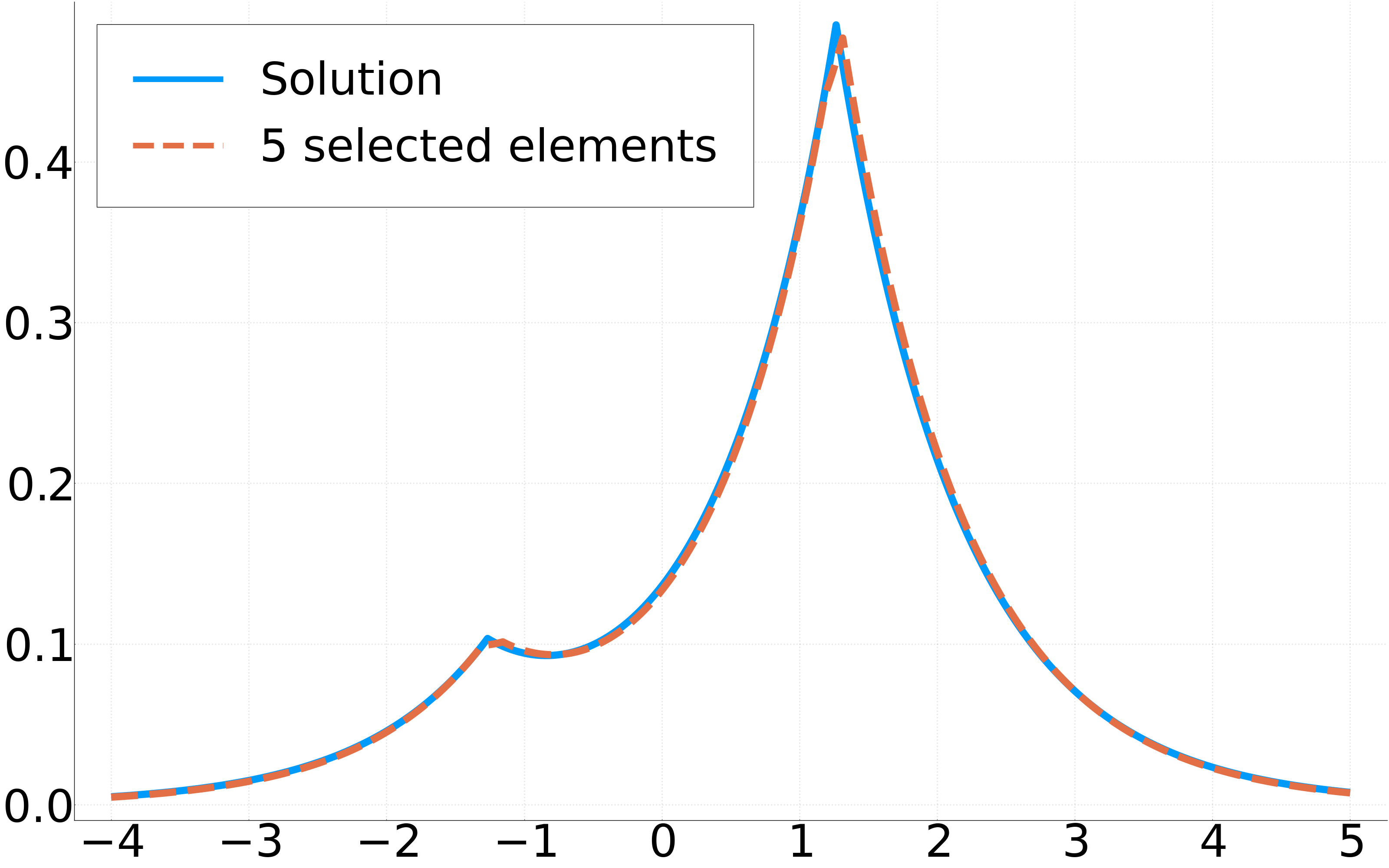}
    \includegraphics[width=0.35\textwidth]{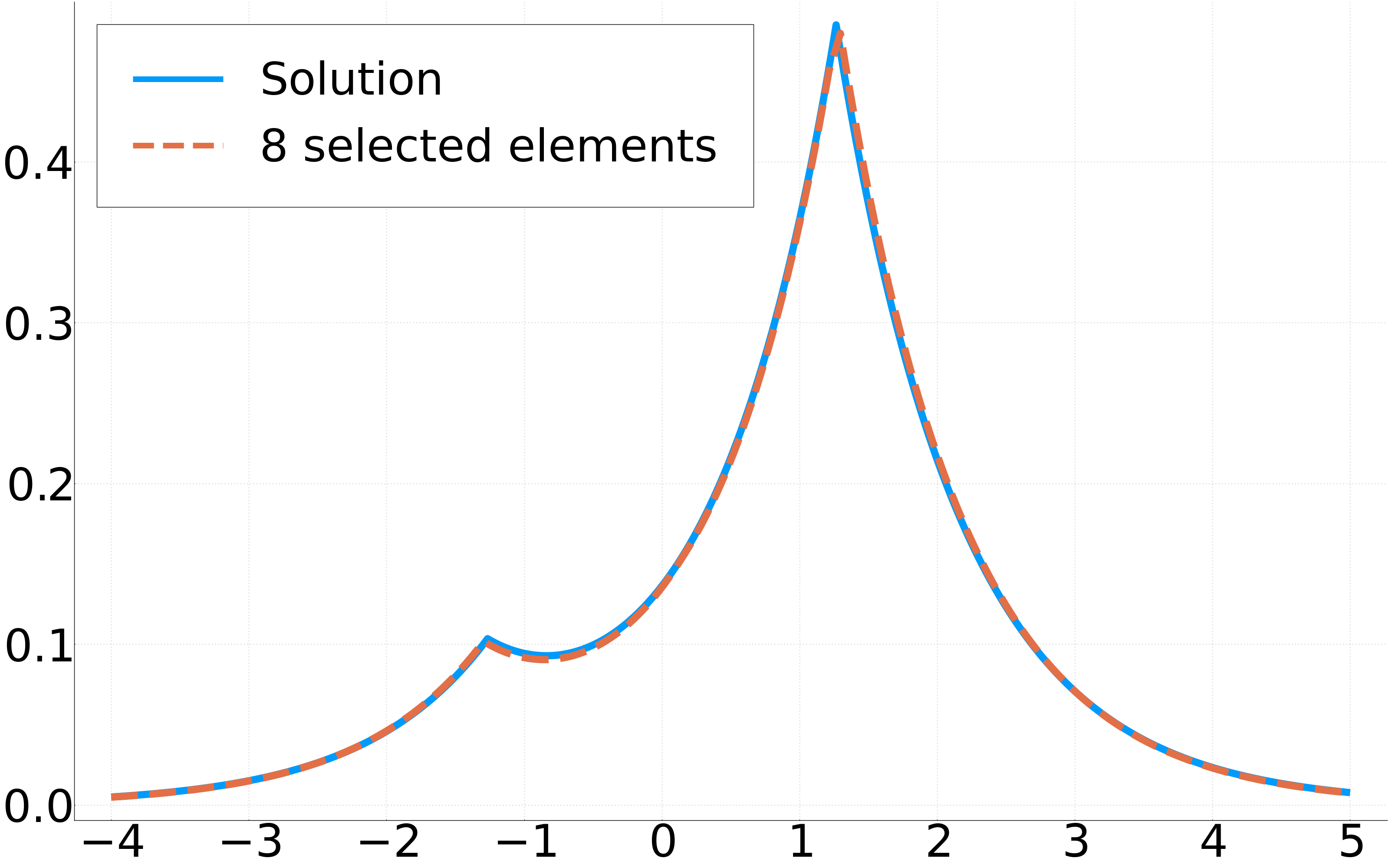}
    \vspace*{8pt}
    \caption{Example of projections on bases with 2,3,5, and 8 elements for $r = 1.266$.}
    \label{fig:proj_example}
\end{figure}

We now present the results obtained by running the offline algorithm presented in Section~\ref{sec:greedy}. We present in Figure~\ref{fig:basis7} the 7 first selected snapshots. We observe that the two first selected snapshots correspond to the extreme parameters $0.5$ and $2.5$, then the next ones are relatively well distributed across the parameter space.
In Figure~\ref{fig:proj_decay}, we plot the decrease of the projection error in $\distMW$-norm over the training set. We provide both the mean error on the training set and the maximum error. We observe that this projection error decreases very fast and seems exponentially decreasing. Moreover, we gain about two orders of magnitude between 2 and 15 added snapshots on the mean error.

In terms of computational cost, the most expensive part is the listing of the vertices of the constraint space~\eqref{eq:forthevertices}, which increases exponentially with the number of selected snapshots. However, the code can be trivially parallelized, and is indeed running on multicores. Also, in the future, a global optimization solver such as Gurobi could be used instead of the listing of the vertices, possibly loosing the global optimality of the found minimizer but gaining a lot in computational efficiency.

In Figure~\ref{fig:proj_decay_w2} we compare a greedy selection of snapshots using the 
$\distMW$-distance and the $\distW$-distance, measuring all projection errors  
in $\distW$-distance. We observe that the selection in $\distMW$-distance clearly outperforms the
$\distW$-distance-based selection, showing that the $\distMW$-distance is more adapted to the considered partial differential equation.

In Figure~\ref{fig:proj_example}, we provide a few examples of projection on the reduced basis for a snapshot with parameter $r = 1.266$, which is in $\M_{\mathbf{z}}$ but not in the training set $\M_{tr}$. We observe that the projections cannot be visually distinguished from the exact solution already when the reduced basis contains only 5 elements.

\subsection{Online phase}

In this section we provide results on the online optimization algorithm.
First, recall that the energy minimization problem~\eqref{eq:minEonline} is a global optimization problem so that Algorithm~\ref{algorithm:online} may not necessarily return the global minimizer of the problem, possibly overestimating the presented error results compared to the exact ones.
In practice, the parameters of Algorithm~\ref{algorithm:online} were chosen as follows. We used a $L = 2000$ elements Sobol sequence covering the set $B_N = [-2, 2]^N\cap\Omega_N(m^1,\ldots, m^N)$ as starting points $\bm \lambda_1, \dots, \bm \lambda_L$.

In Figure~\ref{fig:energy_heatmap} we plot an example of energy landscape $\bm \lambda \mapsto \Esol\left( \appbarMW(m^1, m^2) \right)$ heatmap with $\mathbf{z} = (0.8, 1.1)$ and $\mathbf{r} = (-r, r)$ for $r = 2.15$,
where $m^1$ and $m^2$ are the first two selected snapshots in the offline phase (see Figure~\ref{fig:basis7}). The white part corresponds to the outside of the domain $\Omega_2$.
We already observe several local minima. Moreover, the energy is nonsmooth due to the absolute values appearing in the formula, see Remark~\ref{remark:smoothing} for more information on the smoothing technique.

\begin{figure}[!t]
    \centering
    \includegraphics[width=.5\textwidth]{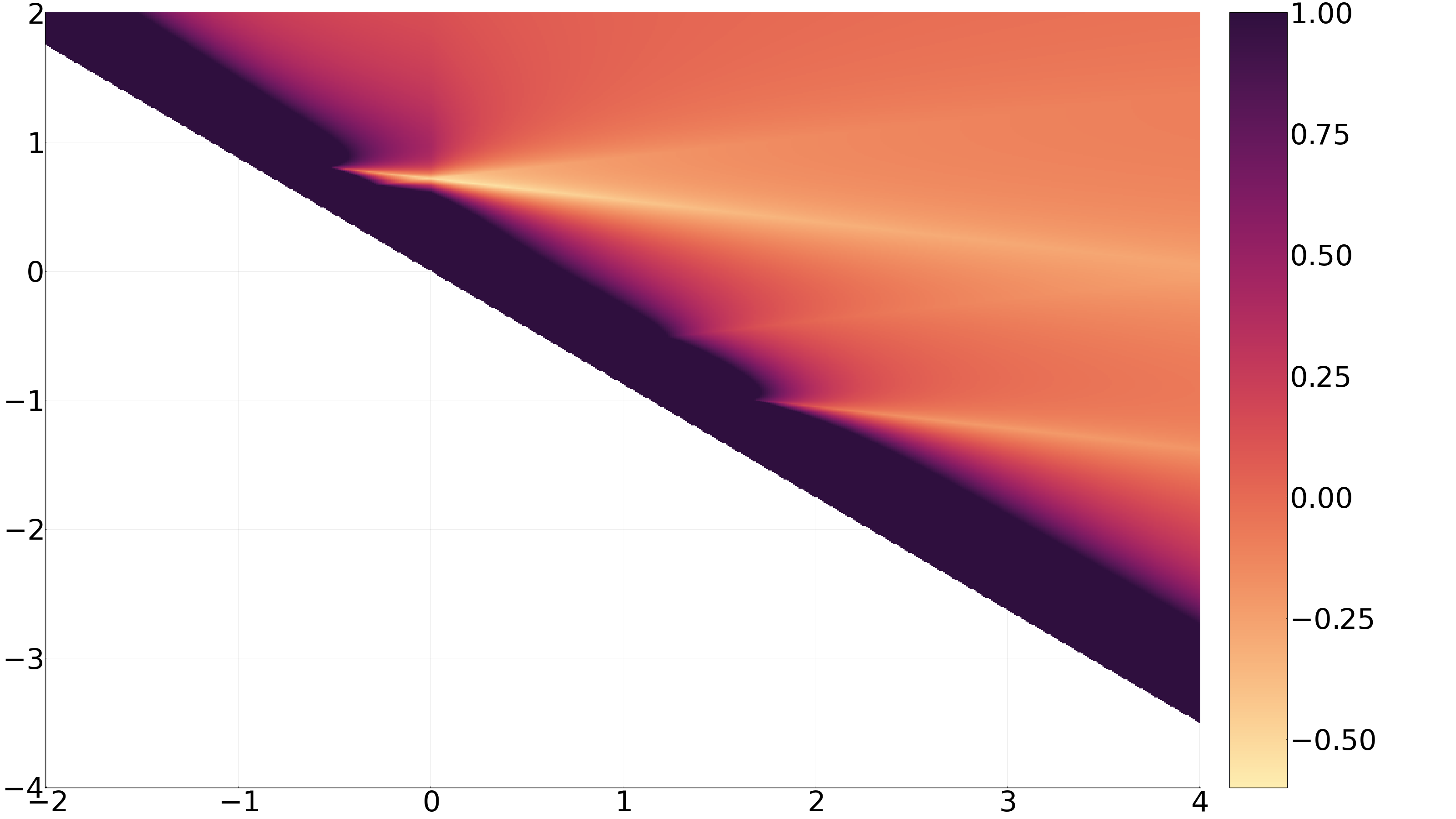}
    \vspace*{8pt}
    \caption{Heatmap of an energy functional (for $r = 2.15$) at barycenters between first two selected elements.}
    \label{fig:energy_heatmap}
\end{figure}

We now provide the plot of the error in energy as a function of the number of selected snapshots in Figure~\ref{fig:energy_decay}.
We provide both the maximum error and the mean error over a test set of $51$ equally distributed elements for $r \in [0.5, 3]$. 
We observe that the energy maximum error decreases by three orders of magnitude from 2 to 8 snapshots, which is particularly encouraging. Adding more elements in the reduced basis does not seem to improve significantly the results. 
This may  either be due to the increasing difficulty of solving the global optimization problem in larger dimension, but also to the smoothing of the energy functional that is used to avoid convergence problems.

\begin{figure}
    \centering
    \includegraphics[width=0.5\textwidth]{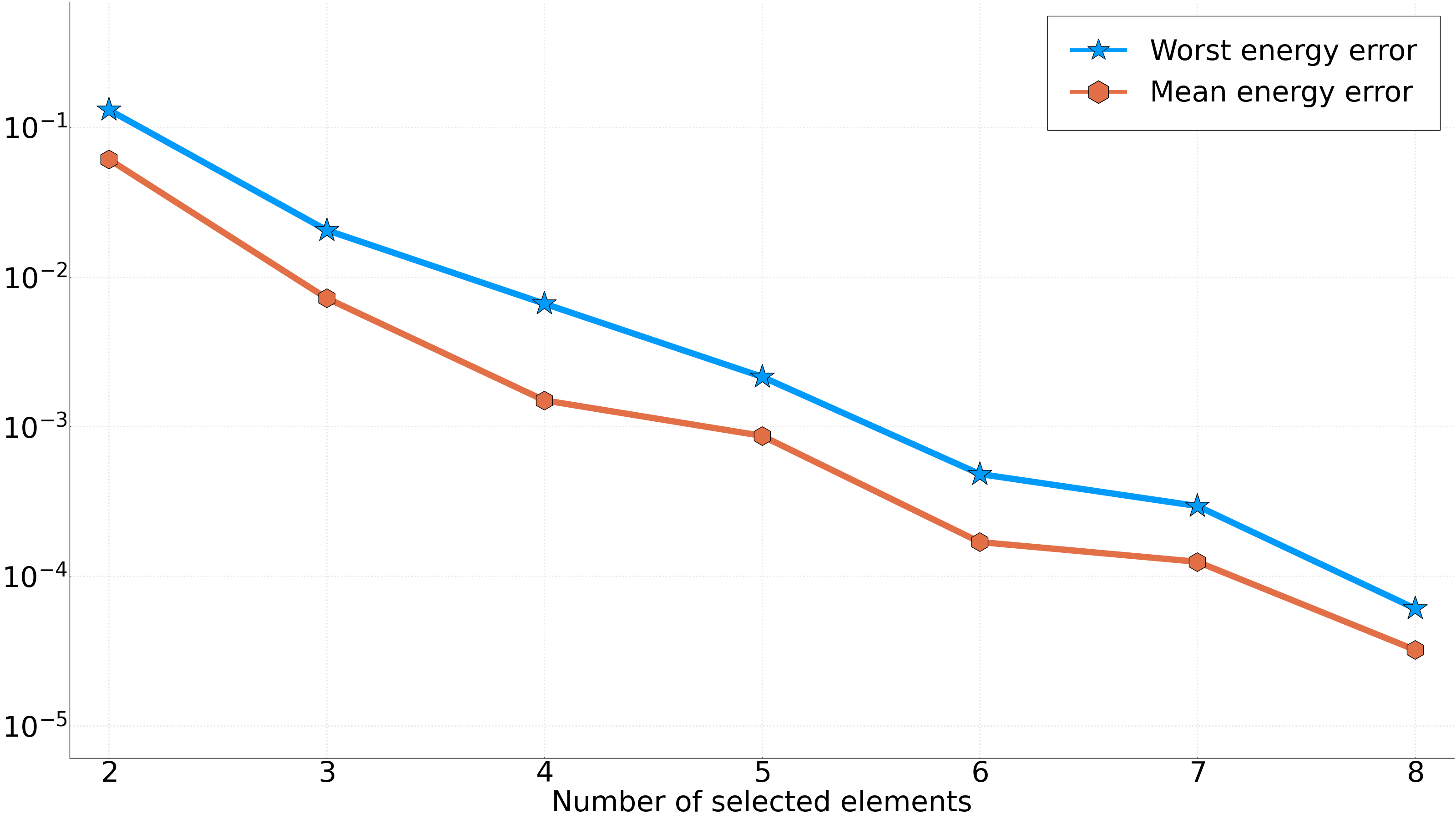}
    \vspace*{8pt}
    \caption{Decay of the energy error in the online phase for 51  equally distributed elements for $r \in [0.5, 3]$. }
    \label{fig:energy_decay}
\end{figure}

\begin{figure}[!htb]
    \centering
    \includegraphics[width=0.4\textwidth]{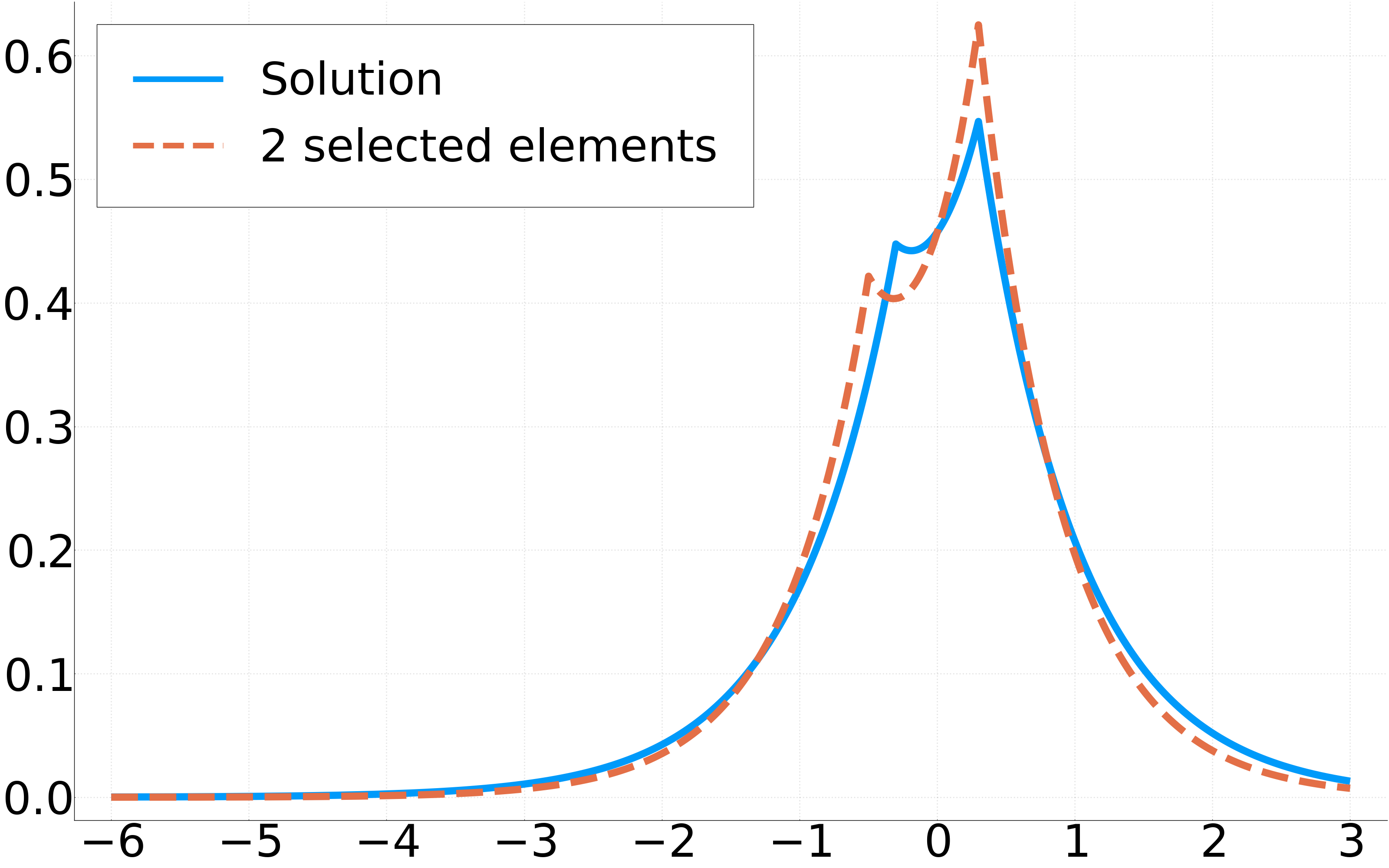}
    \includegraphics[width=0.4\textwidth]{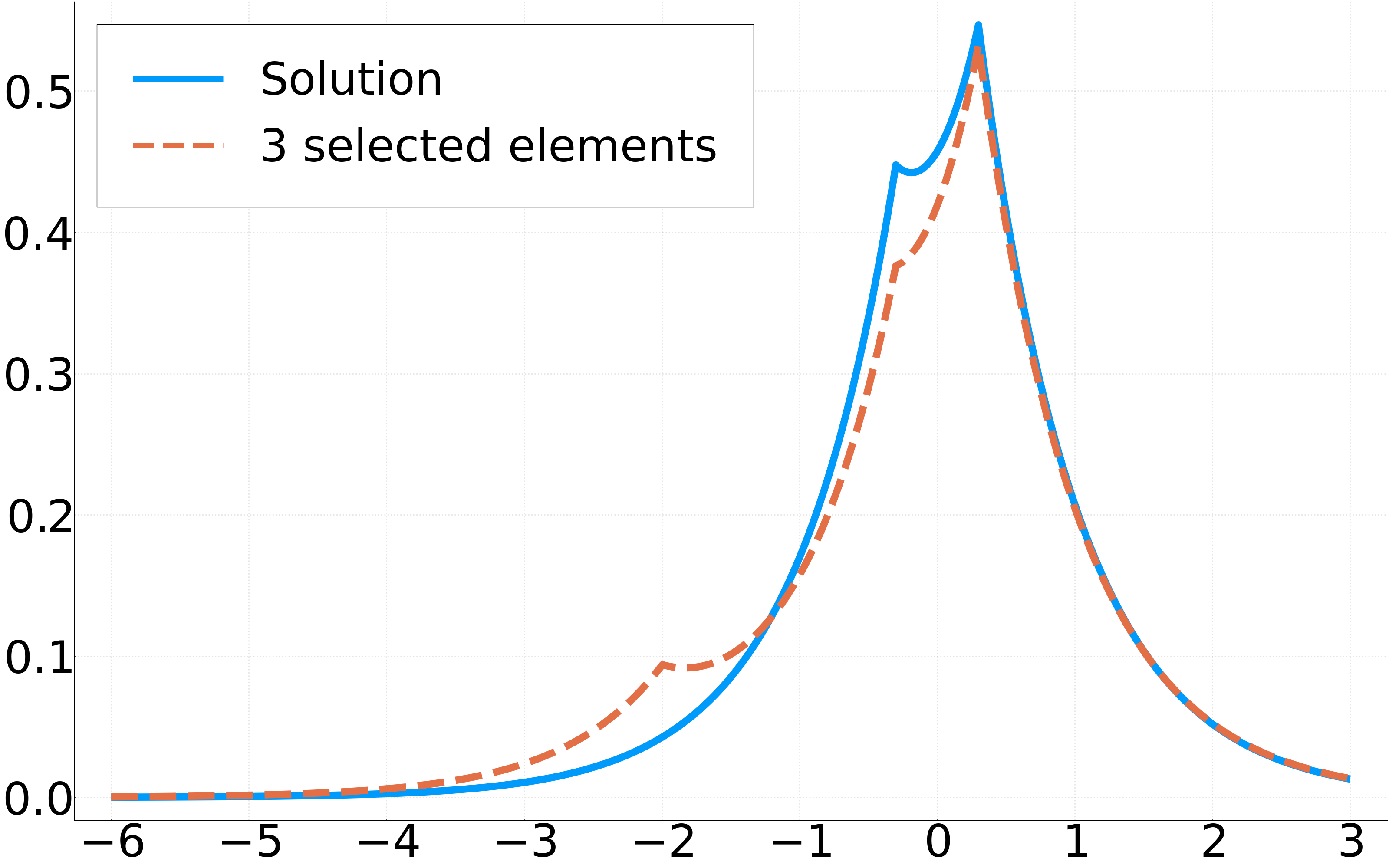}
    \includegraphics[width=0.4\textwidth]{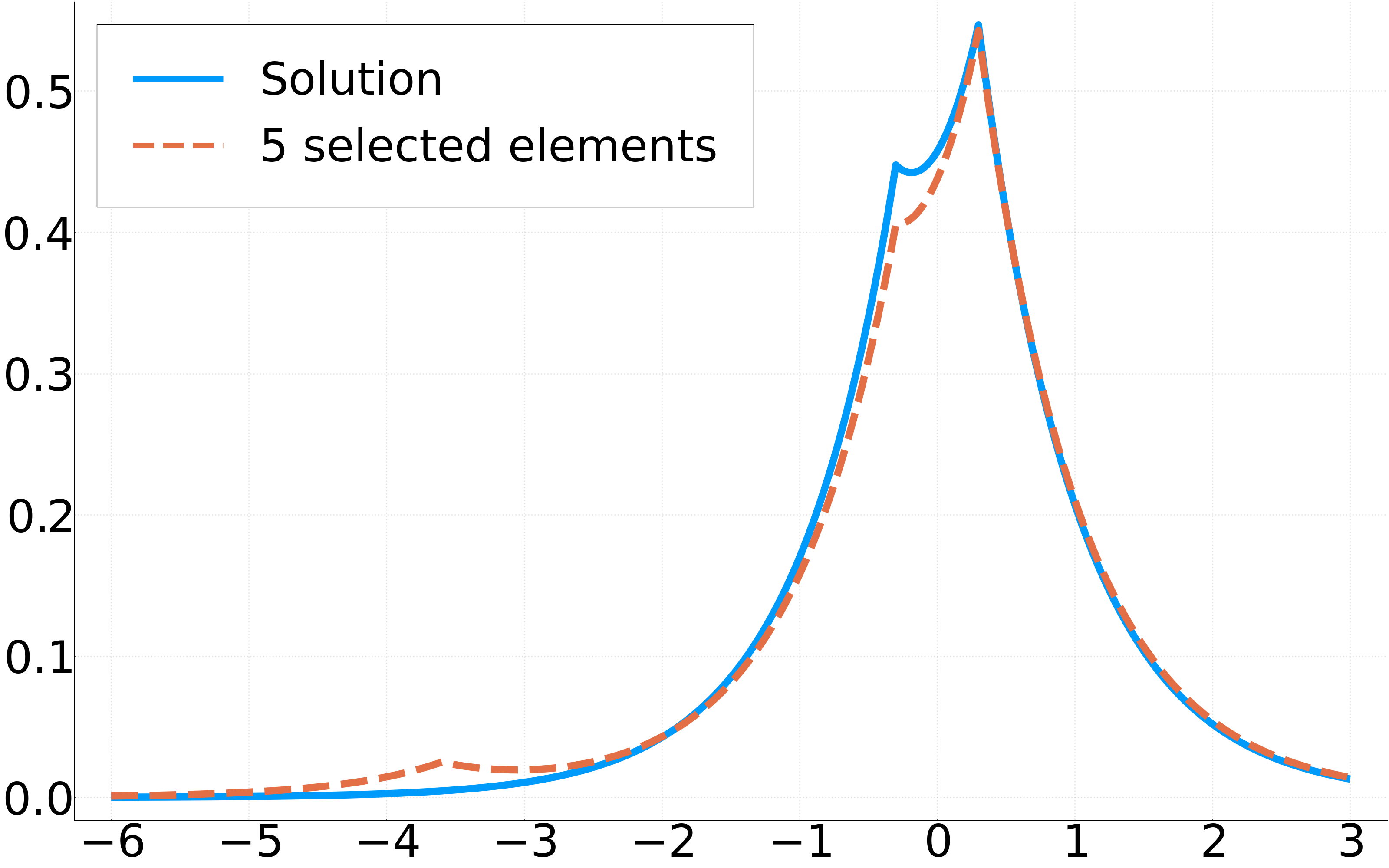}
    \includegraphics[width=0.4\textwidth]{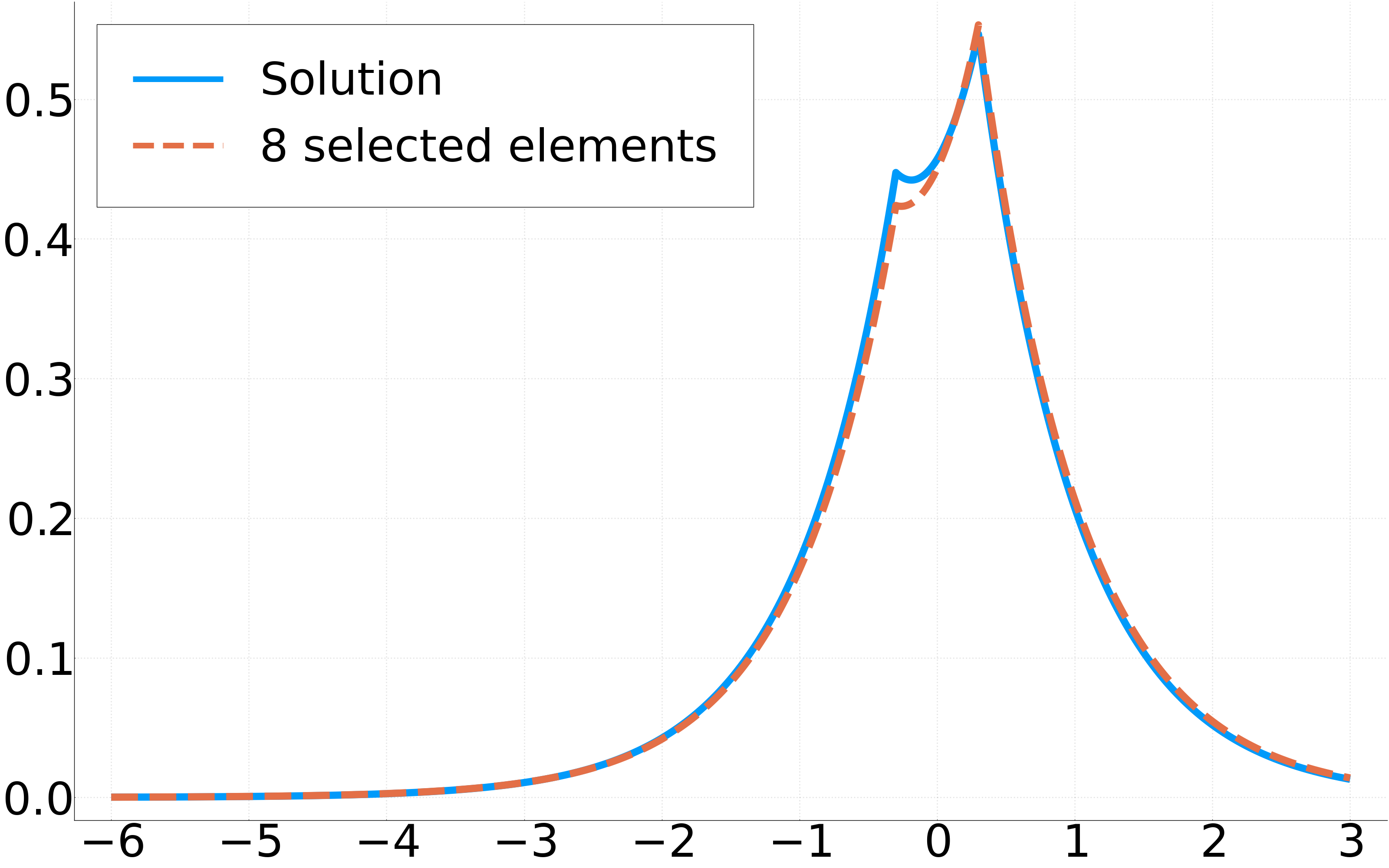}
    \vspace*{8pt}
    \caption{Extrapolation: example of energy projections for $r = 0.3$.}
    \label{fig:inter_proj_example}
\end{figure}

\begin{figure}[!htb]
    \centering
    \includegraphics[width=0.4\textwidth]{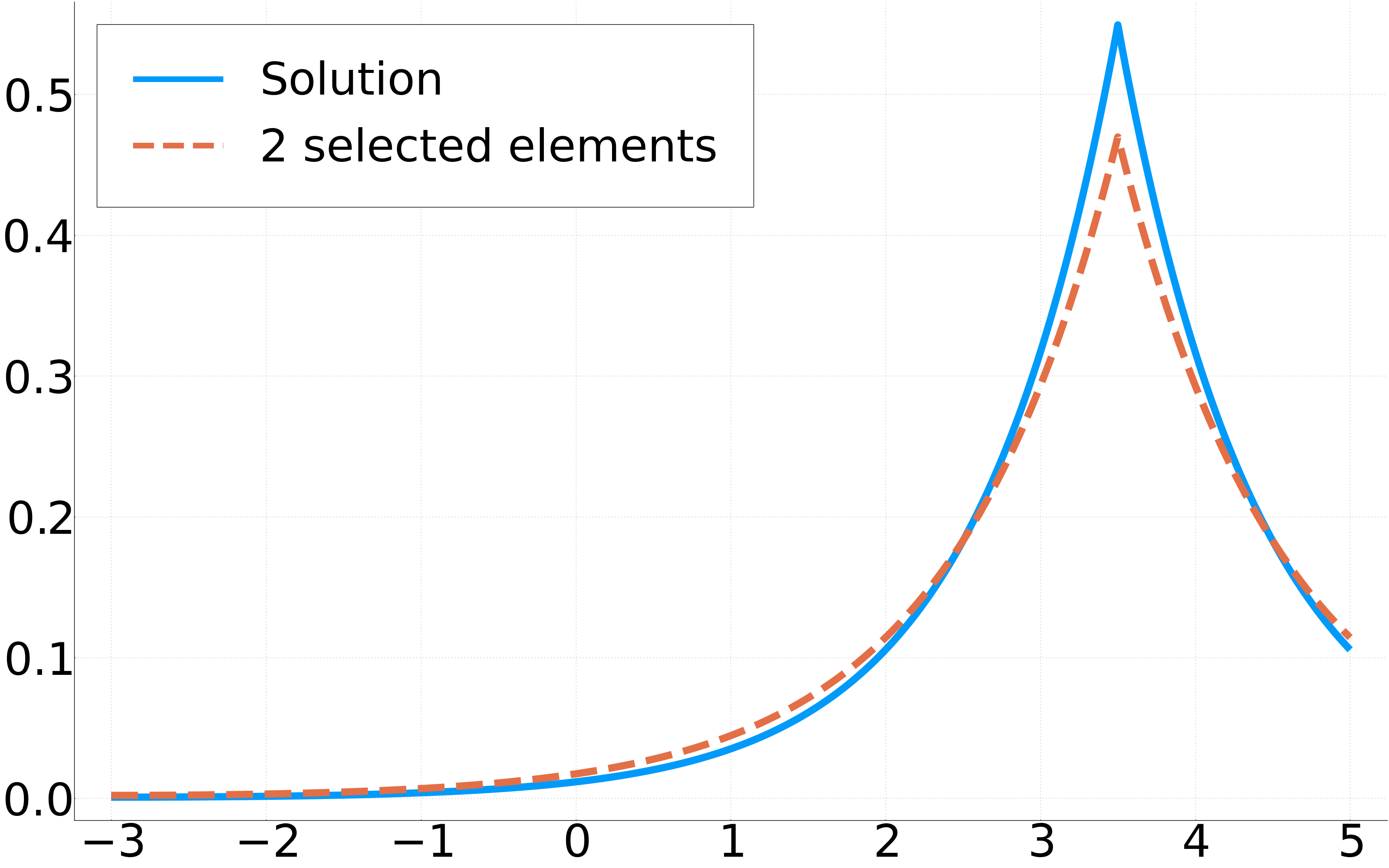}
    \includegraphics[width=0.4\textwidth]{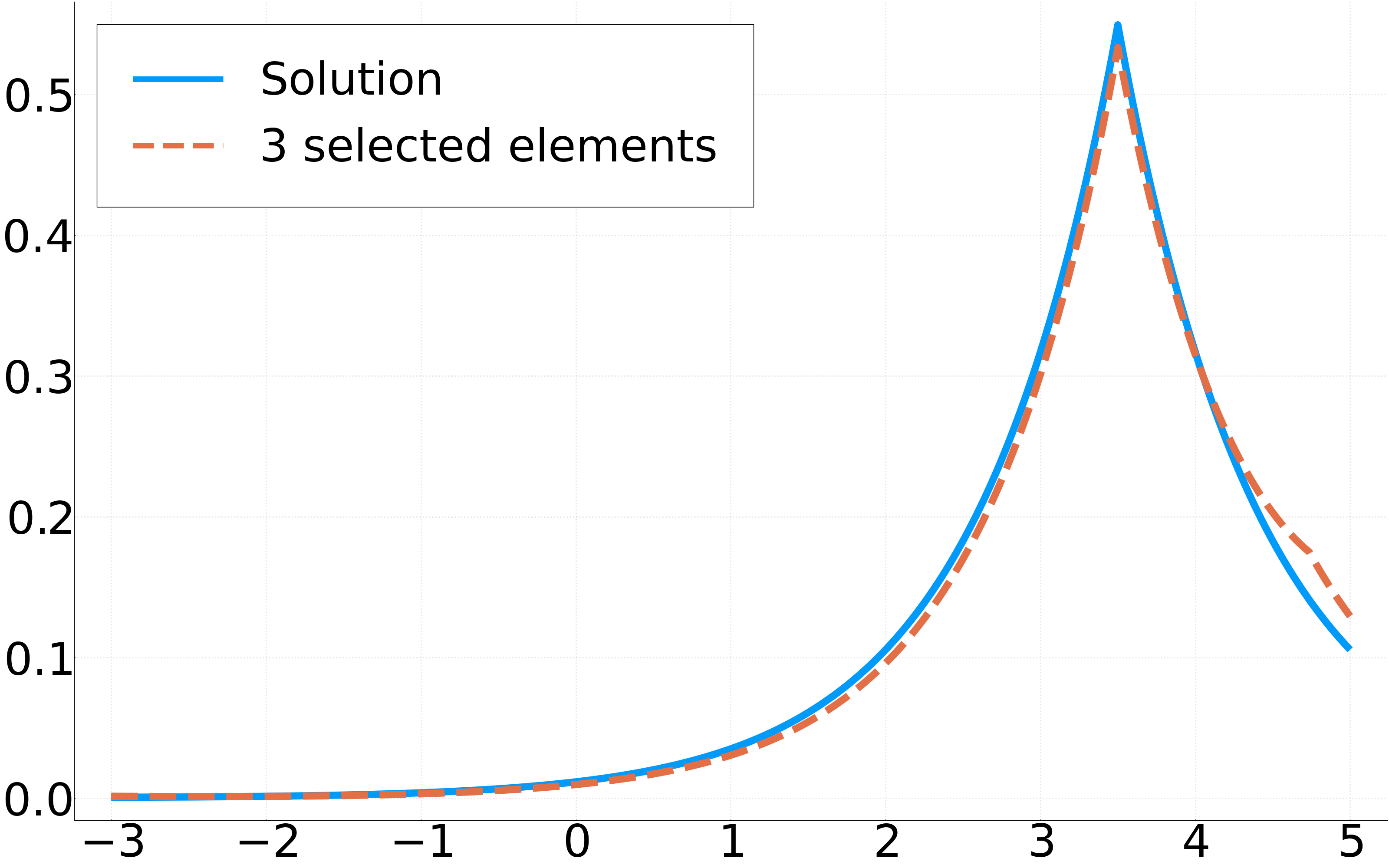}
    \includegraphics[width=0.4\textwidth]{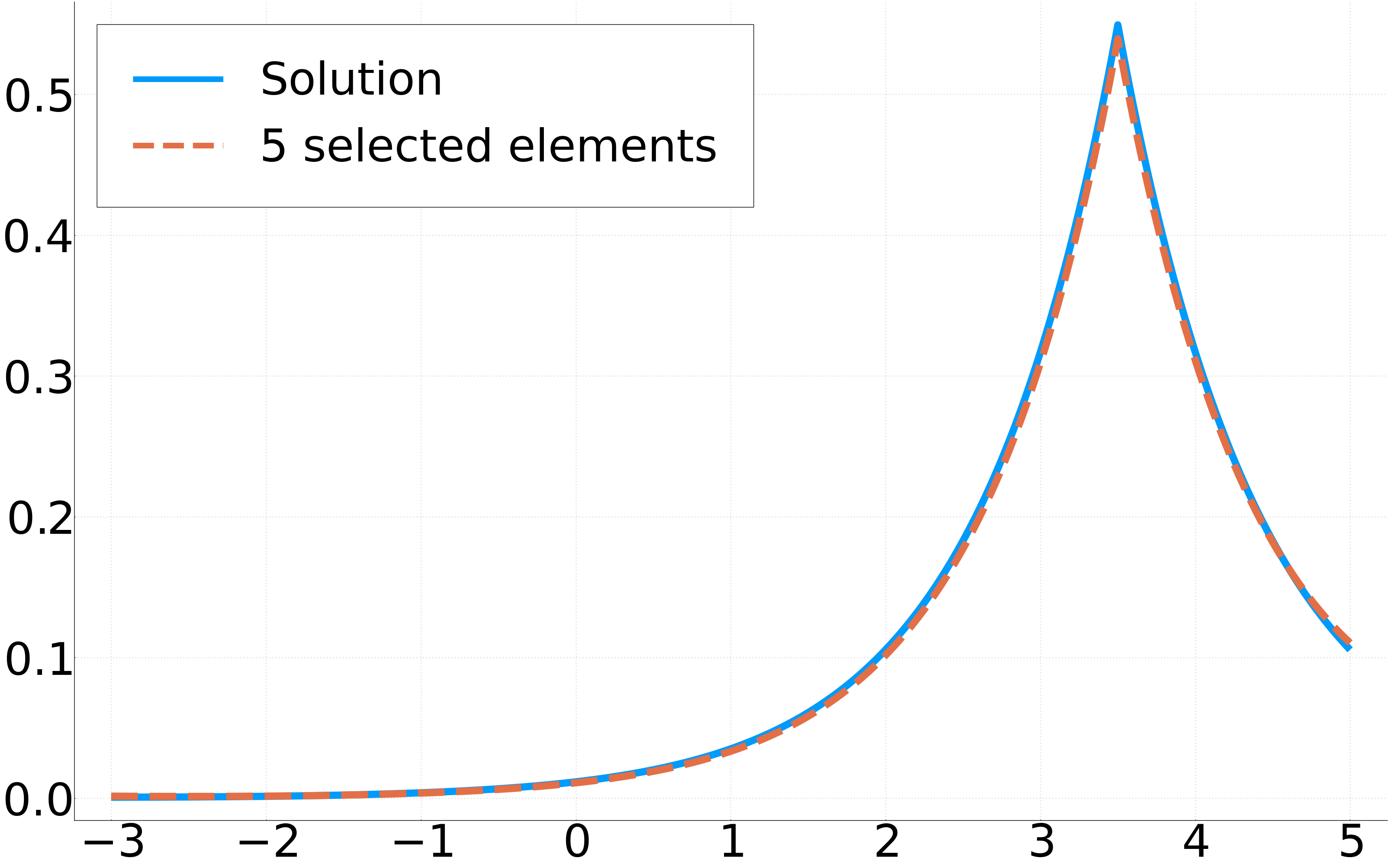}
    \includegraphics[width=0.4\textwidth]{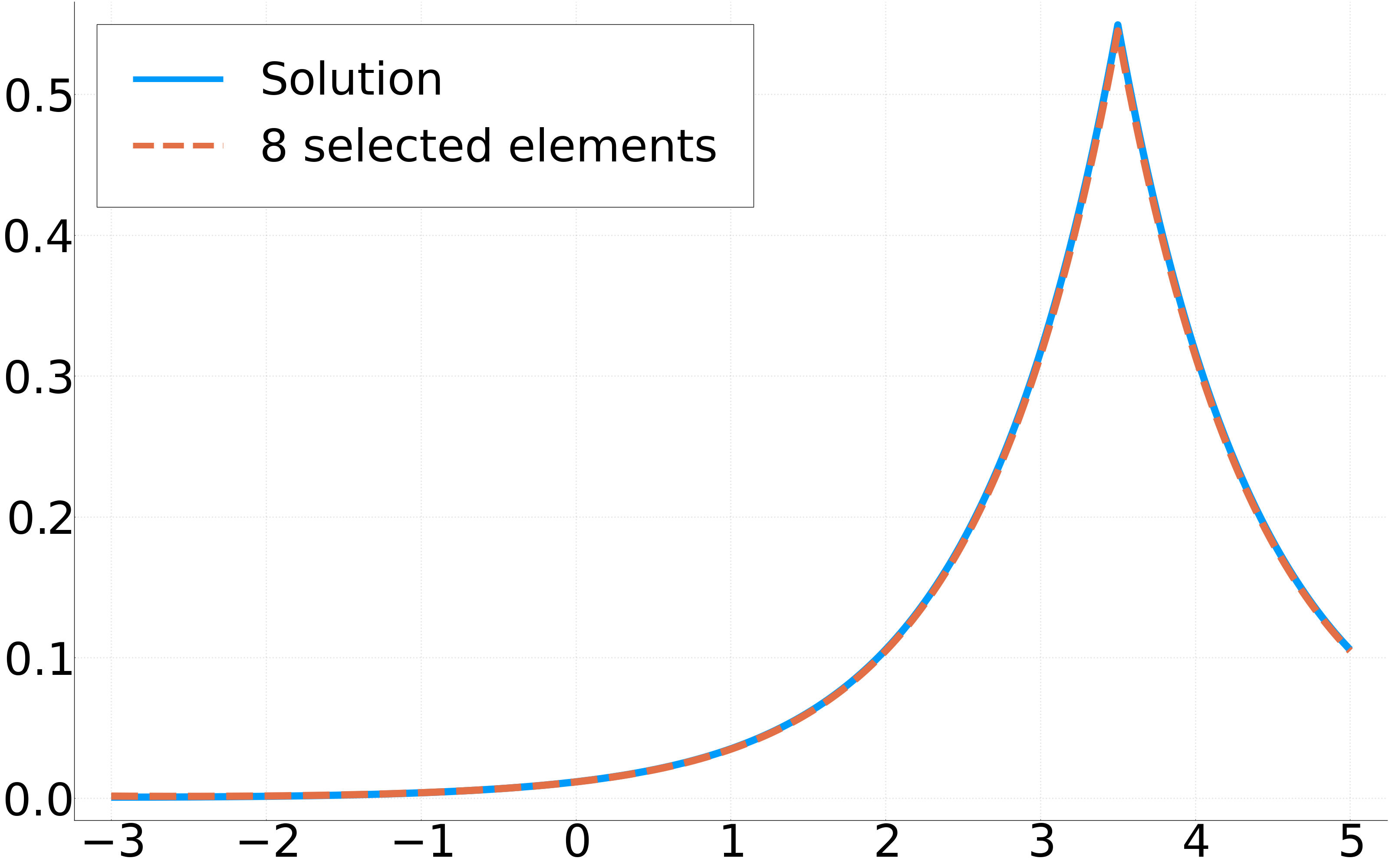}
    \vspace*{8pt}
    \caption{Extrapolation: example of energy projections for $r = 3.5$.}
    \label{fig:exter_proj_example}
\end{figure}

\begin{figure}[!htb]
    \centering
    \includegraphics[width=.5\textwidth]{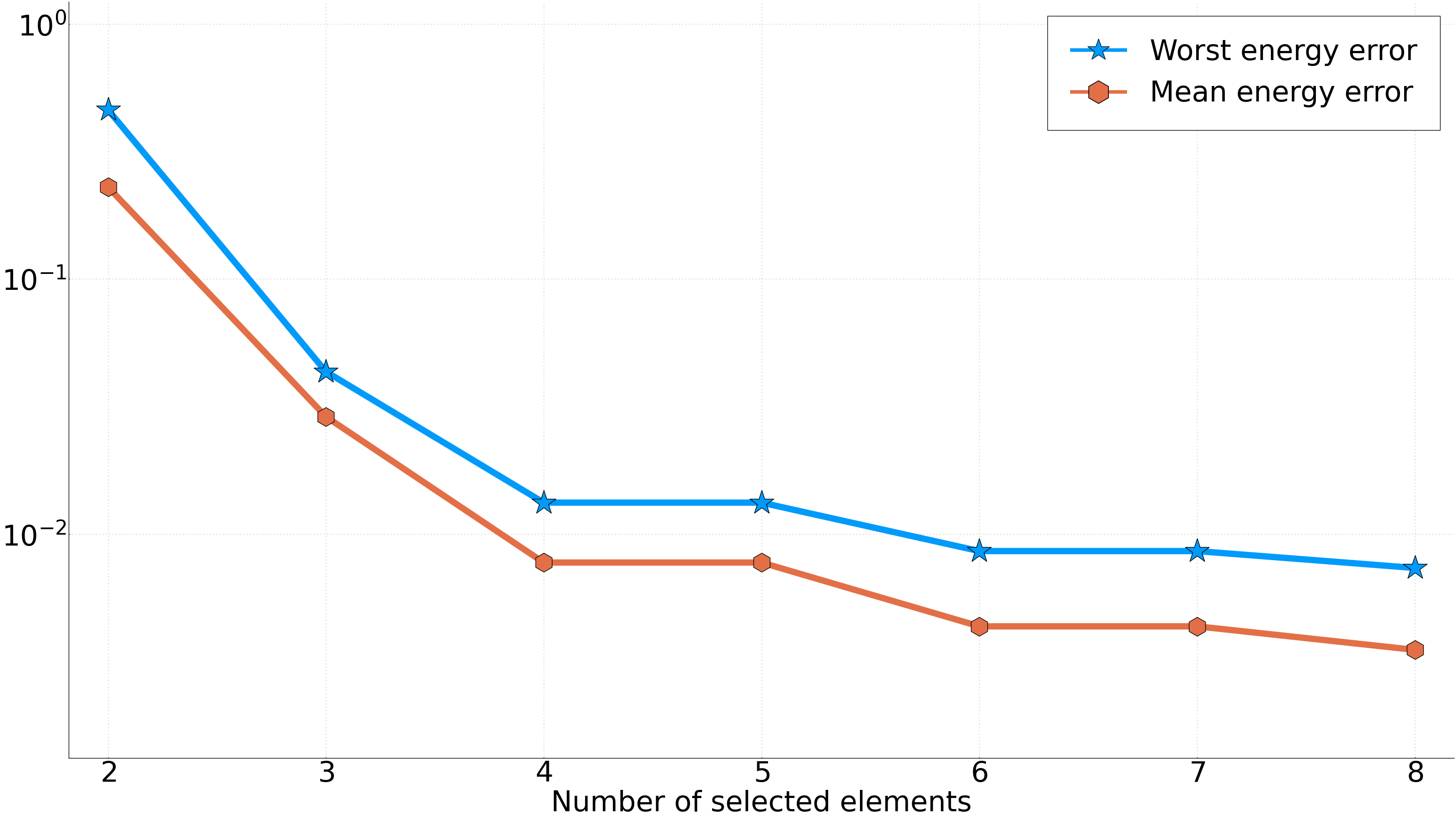}
    \vspace*{8pt}
    \caption{Extrapolation: decay of the energy error over a set of $17$ equally distributed elements for $r = 0, \dots, 0.48$.}
    \label{fig:inter_decay}
\end{figure}

\begin{figure}[!htb]
    \centering
    \includegraphics[width=.5\textwidth]{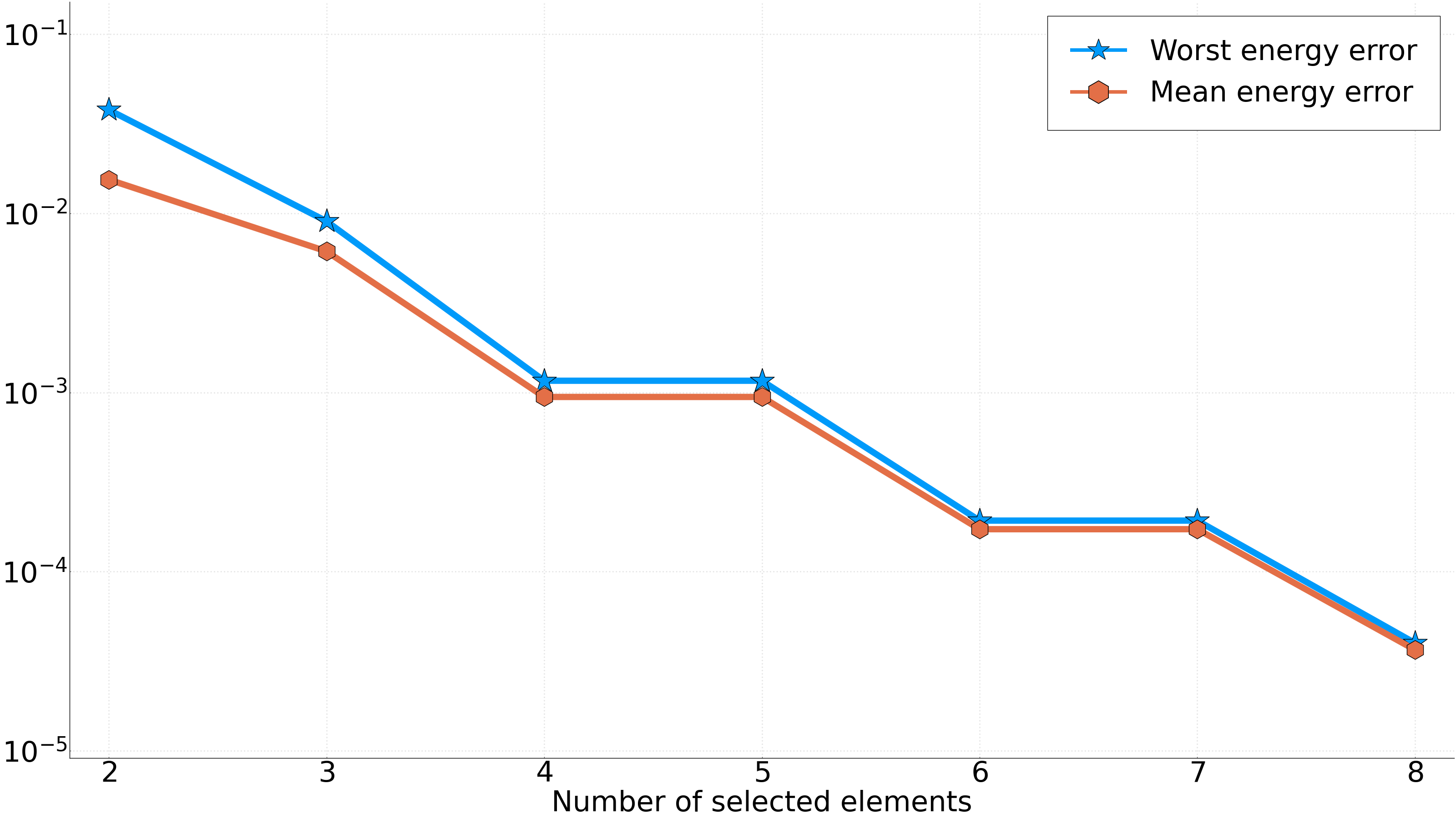}
    \vspace*{8pt}
    \caption{Extrapolation example: decay of the energy error over a set of $21$ equally distributed elements for $r\in [3.05, \dots, 4]$.}
    \label{fig:exter_decay}
\end{figure}

Finally, we provide extrapolation examples. 
On Figures.~\ref{fig:inter_proj_example} and~\ref{fig:exter_proj_example}, we show the projection of the solution on the reduced basis for 2,3,5, and 8 snapshots. We observe that 8 snapshots seems sufficient to obtain a satisfactory barycenter projection of the exact solution onto the reduced basis.
More generally, we plot on Figure~\ref{fig:inter_decay} the decay of the online error on  $17$ equally distributed elements ranging from $r = 0$ to $r= 0.48$. On Figure~\ref{fig:exter_decay}, we plot the energy online error on $21$ equally distributed elements with $r = 3.05, \dots, 4$. We observe that we obtain very accurate results with only a few snapshots in the reduced basis, although the solutions are not in the parameter range of the training set, showing the nice extrapolation capabilities of the method.

\section{Proofs}\label{sec:proofs}

The aim of this section is to gather the proofs of the theoretical results stated in Section~\ref{sec:kolmogorov}.

\subsection{Preliminary lemma}

Before going into the statement of the different theorems, we provide the following basic lemma, which bounds the error between a function with a lack of regularity at a few points and its best piecewise polynomial approximation.
It will be used to obtain upper bound of Kolmogorov $n$-widths in Theorems~\ref{thrm_linear_comp} and~\ref{thrm:transp_before}.
\begin{lemma} \label{lm:proj_error}
    Let $f$ be a real-valued function defined over a compact interval $I = [a, b]$,
    and let $a = x_0 < \dots < x_n = b$ be a mesh of maximal size $h$ on $I$.
    Suppose moreover that $f$ is of class $\C^{p+1}$ on intervals
    $I_k = [x_k, x_{k+1}]$ except a few, named $I_{k_1}, \dots I_{k_q}$ where it is of class
    $\C^{p-1}$ with $f^{(p-1)}$ absolutely continuous. Then, defining
    \begin{equation*}
        V_{n,p} = \vect\{x \mapsto x^i\mathds{1}_{I_k}(x)\}_{\substack{i = 0, \dots, p
            \\ k = 0, \dots, n-1}},
    \end{equation*}
    a vector space of dimension $n(p+1)$, we have the following projection error
    \begin{align*}
        \Vert f - \dP_{V_{n,p}}f \Vert_{\L^2(I)} & \leqslant \left( \sqrt{b-a}
            \frac{\Vert f^{(p+1)}\Vert_{\L^\infty(I \backslash ( I_{k_1} \cup\dots\cup I_{k_q} ) )}}{(p+1)!} \right. \\
            & \qquad\qquad\qquad\qquad + \left. \sqrt{q} \frac{\Vert f^{(p)}\Vert_{\L^\infty(I_{k_1} \cup\dots\cup I_{k_q})}}{p!} \right) h^{p+\frac{1}{2}}.
    \end{align*}
\end{lemma}

\begin{proof}
    We denote by $f_{n,p}$ the element of $V_{n,p}$ obtained as the $p$-th order
    Taylor polynomial at $x_k$ on each interval $I_k$ for $k\notin \{k_1, \dots, k_q\}$, i.e.
    \begin{equation} \label{eq:proof_lemma_taylor_exp}
        \forall x \in I_k, ~ f_{n,p}(x) = \sum_{i = 0}^p \frac{f^{(i)}(x_k)}{i!} (x - x_k)^i,
    \end{equation}
    and as the
    $(p-1)$-th Taylor polynomial at $x_k$ on $I_k$ for the others intervals $I_k$, which is the same
    as in~\eqref{eq:proof_lemma_taylor_exp} but with a sum up to $p -1$. On intervals $I_k$, for $k\notin \{k_1, \dots, k_q\}$,
    the remainder can be written in the Lagrange form
    \begin{equation*}
        f(x) - f_{n,p}(x) = \frac{f^{(p+1)}(\xi)}{(p+1)!}(x - x_k)^{p+1},
    \end{equation*}
    where $\xi \in I_k$. This yields
    \begin{equation} \label{eq:proof_lm:notk0}
        \Vert f - f_{n,p} \Vert_{\L^2(I_k)}^2 \leqslant
            h \left(\frac{\Vert f^{(p+1)} \Vert_{\L^\infty(I_k)}}{(p+1)!}\right)^2 h^{2(p+1)}.
    \end{equation}
    On the remaining intervals $I_k$ for $k \in \{k_1, \dots, k_q\}$, as $f^{(p-1)}$ is absolutely continuous,
    $f^{(p)}$ is defined almost everywhere and we can use the integral form of the remainder and write
    \begin{equation*}
        f(x) - f_{n,p}(x) = \int_{x_k}^x
            \frac{f^{(p)}(t)}{(p-1)!}(x - t)^{p-1} \dd[t].
    \end{equation*}
    It then follows that for all $k \in \{k_1, \dots, k_q\}$
    \begin{equation} \label{eq:proof_lm:k0}
        \Vert f - f_{n,p}\Vert_{\L^2(I_{k})}^2 \leqslant
            h \left(\frac{\Vert f^{(p)} \Vert_{\L^\infty(I_k)}}{p!}\right)^2 h^{2p}.
    \end{equation}
    Combining~\eqref{eq:proof_lm:notk0} and~\eqref{eq:proof_lm:k0}, we have
    \begin{align*}
        \Vert f - f_{n,p} \Vert_{\L^2(I)}^2
        & \leqslant \sum_{k\notin \{k_1, \dots, k_q\}} h \left(\frac{\Vert f^{(p+1)} \Vert_{\L^\infty(I_k)}}{(p+1)!}\right)^2 h^{2(p+1)}
        + \sum_{k\in \{k_1, \dots, k_q\}} h \left(\frac{\Vert f^{(p)} \Vert_{\L^\infty(I_k)}}{p!}\right)^2 h^{2p} \\
        & \leqslant \left( (b-a) \left(\frac{\Vert f^{(p+1)} \Vert_{\L^\infty(I\backslash (I_{k_1} \cup\dots\cup I_{k_q}))}}{(p+1)!}\right)^2
         +  q \left(\frac{\Vert f^{(p)} \Vert_{\L^\infty(I_{k_1} \cup\dots\cup I_{k_q})}}{p!} \right)^2 \right) h^{2p+1},
    \end{align*}
    from which we easily obtain the result.
\end{proof}

\subsection{Proof of Theorem~\ref{thrm_linear_comp}}\label{sec:proof_thrm_linear_comp}

\begin{proof}[Proof of Theorem~\ref{thrm_linear_comp}]
\bfseries Step 1: \normalfont  We first prove the lower bound of~\eqref{eq:thrm:linear_comp:deltan}.
Since $L^2(-R,R)$ can be seen as a subset of $L^2(\mathbb{R})$ (by extending functions by $0$ out of $(-R,R)$), it immediately holds that
    \begin{equation} \label{eq:prrof_linear_comp:injection}
        \ddn(\M_z^R, \L^2(\RR)) \geqslant \ddn(\M_z^R, \L^2(-R, R)).
    \end{equation}
    Let us then prove that there exists a constant $\tilde{c}_R>0$ such that 
    $$
     \tilde{c}_R n^{-\frac{3}{2}} \leqslant \ddn(\M_z^R, \L^2(-R,R)). 
    $$
We denote by $K$ the kernel
    \begin{equation*}
        K:  (-R,R)^2 \ni (x,y) \longmapsto \int_{-R}^R \msol(x) \msol(y) \dd[r],
    \end{equation*}
    and introduce the integral operator defined by
    \begin{equation*}
		T_K:
		\begin{array}{ccc}
			\L^2(-R,R) & \longrightarrow & \L^2(-R,R) \\
			\varphi & \longmapsto & \displaystyle \int_{(-R,R)}K(\cdot, y)\varphi(y)\dd[y].
		\end{array}
    \end{equation*}
The operator $T_K$ is compact since $K\in\L^2((-R,R)^2)$, self-adjoint
    because of the symmetry $K(x, y) = K(y, x)$, and non-negative. Indeed, let $f \in \L^2(-R,R)$. We have
    \begin{align*}
        \langle T_K f, f \rangle_{L^2(-R,R)}
        & = \int_{(-R,R)^2} \left( \int_{-R}^R \msol(x) \msol(y) \dd[r] \right) f(x) f(y) \dd[x]\dd[y] \\
        & = \int_{-R}^R \langle \msol, f \rangle^2_{L^2(-R,R)} \dd[r] \geqslant 0.
    \end{align*}
Thus, from the spectral theorem, there exists a Hilbert basis $(\varphi_k)_{k\in\mathbb{N}^*}$ and a non-increasing sequence of non-negative real numbers
    $(\sigma_k)_{k\in\mathbb{N}^*}$ going to $0$ as $k$ goes to $+\infty$ satisfying
    \begin{equation*}
        \forall k\in\mathbb{N}^*, ~ T_K\varphi_k = \sigma_k\varphi_k.
    \end{equation*}
    Moreover, from~\cite[(1.46)]{Cohen2015-ol}, we can link the $L^2$ Kolmogorov $n$-width to the eigenvalues via the following formula
	\begin{equation} \label{eq:proof_linear_comp:eigen_width}
		\ddn(\M_z^R, \L^2(-R,R))  = \sqrt{\sum_{k=n+1}^{+\infty}\sigma_k}.
	\end{equation}

    Then, we will use the following intermediate lemma, which is proved below.
    \begin{lemma}\label{lm:lambda}
        The spectrum of $T_K$ is equal to the set $\displaystyle \bigcup_{l\in \mathbb{N}^*} \{ \lambda_l, \mu_l\}$, where for all $l\in \mathbb{N}^*$,  
        $$
            \lambda_l = \frac{4z^4}{(2a_l^2 + z^3)^2} \quad \mbox{ and } \quad \mu_l = \frac{4 z^4}{(2 b_l^2 + z^3)^2},
        $$
        with $a_l \in \left( \frac{(l-1)\pi}{R}, \frac{(l-1)\pi}{R}+\frac{\pi}{2R} \right)$ the $l$-th positive zero
        of the function \\ $x \longmapsto x\sin(Rx) - z\cos(Rx)$ and $b_l \in \left( \frac{\pi}{2R} + \frac{(l-1)\pi}{R}, \frac{l\pi}{R} \right)$ the $l$-th positive zero of the function $x \longmapsto x\cos(Rx) + z\sin(Rx)$.
        For all $l\in \mathbb{N}^*$, let us denote by
        \begin{equation*}
            \varphi_l: x \longmapsto \cos(a_lx) \quad \mbox{ and } \quad \psi_l: x \longmapsto \sin(b_lx). 
        \end{equation*}
        Then, $\varphi_l$ (respectively $\psi_l$) is an eigenvector of $T_K$ with corresponding eigenvalue $\lambda_l$ (respectively $\mu_l$). In addition, it holds that $\{ \varphi_l, \psi_l \}_{l\in \mathbb{N}^*}$ is an orthogonal basis of $L^2(-R,R)$.
    \end{lemma}
  An immediate consequence of Lemma~\ref{lm:lambda} is that for all $l\in \mathbb{N}^*$, 
    $$
    \sigma_{2l-1} =  \lambda_l \quad \mbox{ and } \quad \sigma_{2l} = \mu_l. 
    $$
    In particular, for all $k\in \mathbb{N}^*$, $\sigma_k \geq \lambda_k$ and it thus holds that
    $$
     \ddn(\M_z^R, \L^2(-R, R))^2 = \sum_{k=n+1}^{+\infty}\sigma_k \geqslant \sum_{k=n+1}^{+\infty}\lambda_k.
     $$
 In particular, it can be easily seen that the sequence $(\lambda_k)_{k\in \mathbb{N}^*}$ is decreasing and that there exists a
    constant $\tilde{c}_R > 0$ such that $\lambda_k \geqslant \tilde{c}_R k^{-4}$. Therefore, combining~\eqref{eq:prrof_linear_comp:injection} and~\eqref{eq:proof_linear_comp:eigen_width}, we obtain that
    \begin{equation*}
 \dinfn(\M_z^R, \L^2(\RR)) = \ddn(\M_z^R, \L^2(\RR)) \geqslant \tilde{c}_R n^{-\frac{3}{2}},
    \end{equation*}
    which proves the lower bound of~\eqref{eq:thrm:linear_comp:deltan}. \\
    
    {\bfseries Step 2:} We now prove the upper bound of~\eqref{eq:thrm:linear_comp:dn}.
    For $n \in \NN^*$, let us take $r \in [-R, R]$ and $x_0 = -R < \dots < x_n = R$ the equidistant subdivision of $I = [-R, R]$.
    We also define the $2n$-dimensional vector space
    \begin{equation*}
        V_n = \vect\left\{ x \mapsto x^i\mathds{1}_{I_k}(x) \right\}_{\substack{i = 0, 1\\k = 0, \dots, n-1}},
    \end{equation*}
    where $I_k = [x_k, x_{k+1}]$, and denote by $k_r$ the index such that $r \in I_{k_r}$.
    We also define the vector space
    \begin{equation*}
        V = \vect\left\{ x \mapsto e^{zx}\mathds{1}_{(-\infty, -R)}, x \mapsto e^{-zx}\mathds{1}_{(R, +\infty)} \right\}.
    \end{equation*}
    Using these vector spaces, it is clear from the shape of the solution $\msol$ (see~\eqref{eq:mono_zeta}) that
    \begin{equation*}
        \Vert \msol - \dP_{V\oplus V_n}\msol\Vert_{\L^2(\RR)} = \Vert \msol - \dP_{V_n}\msol\Vert_{\L^2(I)}.
    \end{equation*}
    Moreover, applying Lemma~\ref{lm:proj_error} with $p = 1$ and $f = \msol$ which is twice differentiable over
    the intervals $I_k$ for $k \neq k_r$ and absolutely continuous on $I_{k_r}$, we have
    \begin{equation*}
        \Vert \msol - \dP_{V_n}\msol\Vert_{\L^2(I)} \leqslant (2R)^\frac{3}{2} \left(
            \frac{\sqrt{2R}}{2} \Vert \msol''\Vert_{\L^\infty(I \backslash I_{k_r})}
            + \Vert \msol'\Vert_{\L^\infty(I_{k_r})} \right) n^{-\frac{3}{2}}.
    \end{equation*}
    And since for $x \in \RR$,
    \begin{equation*}
        | \msol'(x) | = \frac{z^2}{2} e^{-z|x - r|} \leqslant \frac{z^2}{2} \text{~~~and~~~}
        | \msol''(x) | = \frac{z^3}{2} e^{-z|x - r|} \leqslant \frac{z^3}{2},
    \end{equation*}
    there exists a positive constant $C_R$ depending on $R$ such that
    \begin{equation*}
        \Vert \msol - \dP_{V_n}\msol\Vert_{\L^2(I)} \leqslant C_R n^{-\frac{3}{2}}.
    \end{equation*}
    We can conclude by writing that
    \begin{equation*}
        \dinfn[2n+2](\M_z^R, \L^2(\RR)) \leqslant \sup_{r\in[-R,R]}\Vert \msol - \dP_{V\oplus V_n}\msol \Vert_{\L^2(\RR)}
        \leqslant C_R n^{-\frac{3}{2}}.
    \end{equation*}
    {\bfseries Step 3:}  The two remaining bounds are easily deduced remarking that for any $n$-dimensional subspace $V_n \subset \L^2(\RR)$, we have
    \begin{equation*}
        \left( \int_{-R}^R \Vert \msol - \dP_{V_n}\msol\Vert_{\L^2(\RR)}^2 \dd[r] \right)^\frac{1}{2}
            \leqslant \sqrt{2R} \sup_{r \in [-R, R]} \Vert \msol - \dP_{V_n}\msol \Vert_{\L^2(\RR)}.
    \end{equation*}
    which concludes the proof.
\end{proof}  

We provide now the proof of Lemma~\ref{lm:lambda}.

 \begin{proof}[Proof of Lemma~\ref{lm:lambda}]
        Using the definition~ of the kernel $K$ and rearranging the integrals we remark that, for all $k,l\in \mathbb{N}^*$,
        \begin{align*}
            \langle \varphi_k, T_K \varphi_l \rangle
            &= \int_{-R}^R \left(\int_{-R}^R \msol(x)\cos(a_kx)\dd[x]\right)\left(\int_{-R}^R \msol(y)\cos(a_ly)\dd[y]\right) \dd[r] \nonumber \\
            &= \langle T_S\cos(a_k\cdot), T_S\cos(a_l\cdot)\rangle, \label{eq:TKtoTS}
        \end{align*}
        where $T_S$ is the compact and self-adjoint operator defined by 
        \begin{equation}
            T_S:
            \begin{array}{ccc}
                \L^2(-R,R) & \longrightarrow & \L^2(-R,R) \\
                \varphi & \longmapsto & \displaystyle \left( r \mapsto \int_{-R}^R \msol(x) \varphi(x)\dd[x] \right).
            \end{array}
        \end{equation}
        The self-adjointness of $T_S$ stems from the fact that $u_r(x) = u_x(r)$ for all $(x,r)\in (-R,R)$. Similarly, it holds that for all $k,l\in \mathbb{N}^*$, 
        $$
          \langle \varphi_k, T_K \psi_l \rangle  = \langle T_S\varphi_k, T_S\psi_l\rangle \quad \mbox{ and } \quad   \langle \psi_k, T_K \psi_l \rangle  = \langle T_S\psi_k, T_S\psi_l\rangle. 
        $$
        
        For $k\in \mathbb{N}^*$, we now compute $T_S\cos(a_k\cdot)$:
        \begin{equation} 
        \label{eq:TSI+-1}
            T_S\cos(a_k\cdot)(r) = \frac{z}{2} \left(\int_{-R}^r e^{z(x-r)}\cos(a_kx)\dd + \int_r^R e^{-z(x-r)}\cos(a_kx)\dd \right)
        \end{equation}
        We compute the two integrals using two integrations by parts. First
        \begin{align*}
            I^-(r) &:= \int_{-R}^r e^{z(x-r)}\cos(a_kx)\dd \\
            &= \frac{1}{a_k}\left[\sin(a_kx) e^{z(x-r)}\right]_{-R}^r - \frac{z}{a_k}\int_{-R}^r e^{z(x-r)}\sin(a_kx)\dd \\
            &= \frac{\sin(a_k r)}{a_k} + \frac{\sin(a_kR)}{a_k}e^{-z(R+r)} + \frac{z}{a_k^2}\left[\cos(a_kx) e^{z(x-r)}\right]_{-R}^r
                   - \frac{z^2}{a_k^2} I^-(r) \\
            &= \frac{\sin(a_k r)}{a_k} + \frac{z}{a_k^2}\cos(a_k r) + \frac{a_k\sin(a_kR)-z\cos(a_kR)}{a_k^2}e^{-z(R+r)} 
                   - \frac{z^2}{a_k^2} I^-(r) \\
            &= \frac{\sin(a_k r)}{a_k} + \frac{z}{a_k^2}\cos(a_k r) - \frac{z^2}{a_k^2} I^-(r),
        \end{align*}
        noting that $a_k\sin(a_kR)-z\cos(a_kR) = 0$. In the same manner we can consider the other integral $I^+(r):= \int_r^R e^{-z(x-r)}\cos(a_kx)\dd = I^-(-r)$ to find
        \begin{equation*}
            I^+(r) = -\frac{\sin(a_k r)}{a_k} + \frac{z}{a_k^2}\cos(a_k r) - \frac{z^2}{a_k^2} I^+(r).
        \end{equation*}
        Hence, continuing from~\eqref{eq:TSI+-1},
        \begin{equation*}
            T_S\cos(a_k\cdot)(r) = \frac{z}{2} \left(\frac{2z}{a_k^2}\cos(a_k r) - \frac{z^2}{a_k^2} T_S\cos(a_k\cdot)(r) \right),
        \end{equation*}
        which means that
        \begin{equation*}
            T_S\cos(a_k\cdot)(r) = \frac{2 z^2}{2 a_k^2 + z^3}\cos(a_k r).
        \end{equation*}
 Similarly, for $k\in \mathbb{N}^*$, we now compute $T_S\sin(b_k\cdot)$:
        \begin{equation} \label{eq:TSI+-}
            T_S\sin(b_k\cdot)(r) = \frac{z}{2} \left(\int_{-R}^r e^{z(x-r)}\sin(b_kx)\dd + \int_r^R e^{-z(x-r)}\sin(b_kx)\dd \right)
        \end{equation}
        We compute the two integrals using two integrations by parts. First
        \begin{align*}
            J^-(r) &:= \int_{-R}^r e^{z(x-r)}\sin(b_kx)\dd \\
            &= \frac{1}{b_k}\left[-\cos(b_kx) e^{z(x-r)}\right]_{-R}^r + \frac{z}{b_k}\int_{-R}^r e^{z(x-r)}\cos(b_kx)\dd \\
            &= -\frac{\cos(b_k r)}{b_k} + \frac{\cos(b_kR)}{b_k}e^{-z(R+r)} + \frac{z}{b_k^2}\left[\sin(b_kx) e^{z(x-r)}\right]_{-R}^r
                   - \frac{z^2}{b_k^2} J^-(r) \\
            &= - \frac{\cos(b_k r)}{b_k} + \frac{z}{b_k^2}\sin(b_k r) + \frac{b_k\cos(b_kR)+z\sin(b_kR)}{b_k^2}e^{-z(R+r)} 
                   - \frac{z^2}{b_k^2} J^-(r) \\
            &= \frac{-\cos(b_k r)}{b_k} + \frac{z}{b_k^2}\sin(b_k r) - \frac{z^2}{b_k^2} J^-(r),
        \end{align*}
        noting that $b_k\cos(b_kR)+z\sin(b_kR) = 0$. In the same manner we can consider the other integral $J^+(r):= \int_r^R e^{-z(x-r)}\sin(b_kx)\dd = -J^-(-r)$ to find
        \begin{equation*}
            J^+(r) = \frac{\cos(b_k r)}{b_k} + \frac{z}{b_k^2}\sin(b_k r) - \frac{z^2}{b_k^2} J^+(r).
        \end{equation*}
        Hence, continuing from~\eqref{eq:TSI+-}
        \begin{equation*}
            T_S\sin(b_k\cdot)(r) = \frac{z}{2} \left(\frac{2z}{b_k^2}\sin(b_k r) - \frac{z^2}{b_k^2} T_S\sin(b_k\cdot)(r) \right),
        \end{equation*}
        which means that
        \begin{equation*} 
            T_S\sin(b_k\cdot)(r) = \frac{2 z^2}{2 b_k^2 + z^3}\sin(b_k r).
        \end{equation*}

Hence, for all $k,l\in \mathbb{N}^*$, the functions $\varphi_k$ and $\psi_l$ are eigenvectors of $T_S$ with respective distinct eigenvalues
$\sigma_k = \dfrac{2 z^2}{2 a_k^2 + z^3}$ and $\tau_l = \frac{2 z^2}{2 b_k^2 + z^3}$. Thus, denoting by $\lambda_k = \sigma_k^2$ and by $\mu_k = \tau_k^2$ for all $k\in \mathbb{N}^*$, we obtain that $\displaystyle \bigcup_{k\in \mathbb{N}^*} \{\lambda_k, \mu_k\} \subset \sigma(T_K)$ where $\sigma(T_K)$ is the spectrum of $T_K$. It can also be easily checked that $\{\varphi_k, \psi_k\}_{k\in \mathbb{N}^*}$ forms an orthogonal family of functions of $L^2(-R,R)$.

\medskip

It remains to prove that 
${\rm Span}\{ \phi_k, \psi_k\}_{k\in \mathbb{N}^*}$ is dense in $L^2(-R,R)$. 
From~\cite[Theorem~2]{hammersley1953non}, it is clear that $(\psi_k)_{k\in \mathbb{N}^*}$ is an orthogonal basis of the set of odd functions of $L^2(-R,R)$. 
From~\cite{Verblunsky1954-ej}, it holds similarly that $(\phi_k)_{k\in\mathbb{N}^*}$ is an orthogonal basis of the set of even functions of $L^2(-R,R)$, hence the desired result. In particular, we then have that $\displaystyle \bigcup_{k\in \mathbb{N}^*} \{\lambda_k, \mu_k\} = \sigma(T_K)$. 
\end{proof}

\subsection{Proof of Theorem~\ref{thrm:transp_before}}\label{sec:proof_thrm_transp_before}

\begin{proof}[Proof of Theorem~\ref{thrm:transp_before}]

First note that from the definition of the solution~\eqref{eq:sol} alongside its special form in this case~\eqref{eq:sym2_zetapis},
we can explicitly compute the cumulative distribution function as well as inverse cumulative distribution function as
 \begin{equation*}
    \cdf_{\ssol}(x) =
    \begin{cases}
        \frac{1}{2} \cosh(\szeta r)e^{\szeta x}, & x < -r, \\
        \frac{1}{2} \left(1 + e^{-\szeta r}\sinh(\szeta x) \right), & -r \leqslant x < r, \\
        1 - \frac{1}{2} \cosh(\szeta r)e^{-\szeta x}, & x \leqslant -r, \\
    \end{cases}
\end{equation*}
and
\begin{equation} \label{eq:def_icdfsym}
    \icdf_{\ssol}(s) =
    \begin{cases}
        \frac{1}{\szeta} ( \ln(2s) - \ln(\cosh(\szeta r)) ) , & 0 < s < s_r, \\
        \frac{1}{\szeta} \asinh\left( e^{\szeta r} (2s-1) \right), & s_r \leqslant s < 1 - s_r, \\
        -\frac{1}{\szeta} ( \ln(2(1 - s)) - \ln(\cosh(\szeta r)) ) , & 1 - s_r < s \leqslant 1, \\
    \end{cases}
\end{equation}
with 
\begin{equation*}
    s_r = \frac{1}{2} \cosh(\szeta r) e^{-\szeta r} = \frac{1}{4}\left(1 + e^{-2\szeta r} \right) = \frac{1}{4z} \szeta,
\end{equation*}
thanks to~\eqref{eq:zeta_eq}.

    Now, for $n \in \NN^*$, we introduce $\frac{1}{4} = s_0 < \dots < s_n = \frac{3}{4}$ the equidistant
    subdivision of $I := \left[\frac{1}{4}, \frac{3}{4}\right]$, and $V$, $V_n$ the vector spaces
    respectively defined by
    \begin{equation} \label{eq:proof_transp_before:def_V}
        V = \vect \left\{
                s \mapsto \ln(2s)\mathds{1}_{\left(0, \frac{1}{4}\right)}(s),
                ~ s \mapsto \mathds{1}_{\left(0, \frac{1}{4}\right)}(s),
                 s \mapsto \ln(2(1 - s))\mathds{1}_{\left(\frac{3}{4}, 1\right)}(s),
                ~ s \mapsto \mathds{1}_{\left(\frac{3}{4}, 1\right)}(s) \right\},
    \end{equation}
    and
    \begin{equation} \label{eq:proof_transp_before:def_Vn}
        V_n = \vect\left\{s \mapsto s^i\mathds{1}_{I_k}(s)\right\}_{
            \substack{i = 0, 1, 2 \\ k = 0, \dots, n-1}},
    \end{equation}
    where $I_k$ is the interval $[s_k, s_{k+1}]$. Since

    \begin{equation} \label{eq:proof_transp_before:dn}
		\dinfn[3n+4](\T_R^-, \L^2(0, 1)) \leqslant \sup_{r\in[0,R]} 
		      \Vert \icdf_{\ssol} - \dP_{V \oplus V_n}\icdf_{\ssol} \Vert_{\L^2(0, 1)},
	\end{equation}
    we are interested in estimating the error
    $\Vert \icdf_{\ssol} - \dP_{V\oplus V_n}\icdf_{\ssol} \Vert_{\L^2(0, 1)}$ for all $r\in [0,R]$.

    First, for $s \in \left(0, \frac{1}{4}\right) \cup \left(\frac{3}{4}, 1\right)$,
    it is clear that for all $r\in [0,R]$, the error
    $| \icdf_{\ssol}(s) - \dP_{V \oplus V_n}\icdf_{\ssol}(s) |$
    is equal to $0$ from the definition of $V$.

    On the remaining interval $I$, we use Lemma~\ref{lm:proj_error} with $p = 2$ and $f = \icdf_{\ssol}$ which is three times differentiable
    on the intervals $I_k$ for $k \notin \{k_r$, $\tilde{k}_r \}$ where $k_r$, $\tilde{k}_r$ are the indices such that $s_r \in I_{k_r}$ and
    $1 - s_r \in I_{\tilde{k}_r}$, and $f'$ is absolutely continuous on the intervals $I_{k_r}$ and $I_{\tilde{k}_r}$. We have
    
    \begin{equation} \label{eq:proof_transp_before:lm_eq}
        \Vert \icdf_{\ssol} - \dP_{V\oplus V_n}\icdf_{\ssol} \Vert_{\L^2(I)} \leqslant \left(
            \frac{1}{6\sqrt{2}} \Vert \icdf_{\ssol}^{(3)}\Vert_{\L^\infty(I \backslash (I_{k_r} \cup I_{\tilde{k}_r}))}
            \frac{\sqrt{2}}{2}  \Vert \icdf_{\ssol}^{(2)}\Vert_{\L^\infty(I_{k_r} \cup I_{\tilde{k}_r})}
            \right) (2n)^{-\frac{5}{2}}.
    \end{equation}

    From~\eqref{eq:def_icdfsym}, we have
    \begin{equation*}
        \icdf_{\ssol}^{(2)}(s) =
        \begin{cases}
            - \frac{1}{\szeta s^2} , & \frac{1}{4} \leqslant s < s_r, \\
            - \frac{4}{\szeta} \frac{2s-1}{\left( e^{-2\szeta r} + (2s-1)^2\right)^\frac{3}{2}},
                & s_r \leqslant s \leqslant \frac{1}{2}, \\
        \end{cases}
    \end{equation*}
    and
    \begin{equation*}
        \icdf_{\ssol}^{(3)}(s) =
        \begin{cases}
            \frac{2}{\szeta s^3} , & \frac{1}{4} \leqslant s < s_r, \\
            \frac{8}{\szeta} \frac{2(2s-1)^2 - e^{-2\szeta r}}{\left( e^{-2\szeta r} + (2s-1)^2\right)^\frac{5}{2}},
                & s_r \leqslant s \leqslant \frac{1}{2}. \\
        \end{cases}
    \end{equation*}

    Given these formulas, we can easily bound $\icdf_{\ssol}^{(2)}$ and $\icdf_{\ssol}^{(3)}$ on $I$.
    On $\left[ \frac{1}{4}, s_r \right]$, we have
    \begin{equation*}
        |\icdf_{\ssol}^{(2)}(s)| = \frac{1}{\szeta s^2} \leqslant \frac{16}{\szeta} \leqslant \frac{16}{z}, \; \; \text{ and } \; \;
        |\icdf_{\ssol}^{(3)}(s)| = \frac{2}{\szeta s^3} \leqslant \frac{128}{\szeta} \leqslant \frac{128}{z},
    \end{equation*}
    since $\szeta \geqslant z$ by~\eqref{eq:zeta_eq}.
    On $\left[ s_r, \frac{1}{2}\right]$, we have
    \begin{equation*}
        |\icdf_{\ssol}^{(2)}(s)| = \frac{4}{\szeta} \frac{1-2s}{\left((2s-1)^2 + e^{-2\szeta r}\right)^\frac{3}{2}}
            \leqslant  \frac{4}{\szeta} e^{3\szeta r} (1-2s)
            \leqslant  \frac{2}{\szeta} e^{3\szeta r}
            \leqslant  \frac{2}{z} e^{3\zeta_R R},
    \end{equation*}
    and
    \begin{equation*}
        |\icdf_{\ssol}^{(3)}(s)| = \frac{8}{\szeta} \frac{\left|2(2s-1)^2 - e^{-2\szeta r}\right|}{
            \left( (2s-1)^2 +  e^{-2\szeta r}\right)^\frac{5}{2}}
        \leqslant  \frac{8}{\szeta} e^{5\szeta r}
        \leqslant  \frac{8}{z} e^{5\zeta_R R}
    \end{equation*}
    since 
    \begin{equation*}
        \left|2(2s-1)^2 - e^{-2\szeta r}\right|
        \leqslant\max\left( e^{-2\szeta r}, 2(2s_r-1)^2 - e^{-2\szeta r} \right)
        \leqslant 1.
    \end{equation*}
    The estimate for the remaining part $\left[\frac{1}{2}, \frac{3}{4}\right]$ follows noting that $\icdf(s-1/2) = - \icdf(s+1/2)$. It then follows from~\eqref{eq:proof_transp_before:lm_eq} and the above bounds for the second and third
    derivatives of $\icdf_{\ssol}$ that
    \begin{equation} \label{eq:proof_transp_before:final_bound}
        \Vert \icdf_{\ssol} - \dP_{V\oplus V_n}\icdf_{\ssol} \Vert_{\L^2(0, 1)} \leqslant
            \frac{1}{4z} \left( \frac{2}{3} e^{5\zeta_R R}  + e^{3\zeta_R R} \right)
            n^{-\frac{5}{2}} \leqslant C e^{5\zeta_R R} n^{-\frac{5}{2}},
    \end{equation}
    for $e^{3\zeta_R R} \geqslant 8$ and $e^{5\zeta_R R} \geqslant 16$,
    with $C> 0$ a real constant independent of $R$.
    We conclude the proof by combining~\eqref{eq:proof_transp_before:dn} and~\eqref{eq:proof_transp_before:final_bound}.
\end{proof}

\subsection{Proof of Theorem~\ref{thrm:transp_after}}\label{sec:proof_thrm_transp_after}

\begin{proof}[Proof of Theorem~\ref{thrm:transp_after}]
Let $R>0$. For any $r \geqslant R$, we define the functions $g_r$ and $h_r$ as
\begin{equation*}
    g_r = \frac{1}{2} \cdf_{\S{\szeta}{-r}}, \hspace{2cm} h_r = \frac{1}{2} \cdf_{\S{\szeta}{r}},
\end{equation*}
the cumulative distribution functions of the two Slater distributions of $u_r$, which clearly yields
\begin{equation*}
    \cdf_{\ssol} = g_r + h_r.
\end{equation*}
Moreover, we have
\begin{equation*}
    g_r(x) =
    \begin{cases}
        \frac{1}{4} e^{\szeta (x + r)}, & x < -r, \\
        \frac{1}{2} \left(1 - \frac{1}{2} e^{-\szeta (x + r)} \right), & x \geqslant -r,
    \end{cases}
    \quad h_r(x) =
    \begin{cases}
        \frac{1}{4} e^{\szeta (x - r)}, & x < -r, \\
        \frac{1}{2} \left(1 - \frac{1}{2} e^{-\szeta (x - r)} \right), & x \geqslant -r,
    \end{cases}
\end{equation*}
and
\begin{equation*}
    g_r^{-1}(s) =
    \begin{cases}
        \frac{1}{\szeta}\ln(4s) - r, & 
        \hspace{-3mm} s \in \left(0, \frac{1}{4} \right), \\
        - \frac{1}{\szeta}\ln(2(1-2s)) - r, & 
        \hspace{-3mm} s \in \left[\frac{1}{4}, \frac{1}{2} \right),
    \end{cases}
    \quad h_r^{-1}(s) =
    \begin{cases}
        \frac{1}{\szeta}\ln(4s) + r, & 
        \hspace{-3mm} s \in \left(0, \frac{1}{4} \right), \\
        - \frac{1}{\szeta}\ln(2(1-2s)) + r, & 
        \hspace{-3mm} s \in \left[\frac{1}{4}, \frac{1}{2} \right).
    \end{cases}
\end{equation*}

Consider now the vector space $V_R$
    \begin{equation} \label{eq:def_vec_VR}
        V_R= \vect\left\{ \mathds{1}_{\left(0, \frac{1}{2} \right)}, g_R^{-1},
        \mathds{1}_{\left(\frac{1}{2}, 1 \right)}, \left(\frac{1}{2} + h_R\right)^{-1} \right\}.
    \end{equation}
Our aim is to prove that there exists a positive constant $C$ independent of $r$ and $R$ such that for all $r \geqslant R$, the following bound for the projection error holds:
	\begin{equation*}
		\Vert \icdf_{\ssol} - \dP_{V_R} \icdf_{\ssol} \Vert_{\L^2(0, 1)} \leqslant C R e^{-\frac{1}{2} \zeta_R R},
	\end{equation*}
    which will yield the desired result. It roughly states that solutions $u_r$ with a large position parameter $r$ are close to the mean of the two solutions with $M = 1$ centered in $-r$ and $r$ with a fixed charge $z$.

    By parity with respect to $s=1/2$ of $\icdf_{\ssol}$ and remarking
    that $V_R$ contains functions on $\left(0, \frac{1}{2}\right)$ and their exact translations
    on $\left(\frac{1}{2}, 1\right)$, we only need to focus on the left interval $\left(0, \frac{1}{2}\right)$.
    Now, as $g_r^{-1}$ is in $V_R$, we bound the projection by
    \begin{align}
		\Vert \icdf_{\ssol} - \dP_{V_R} \icdf_{\ssol} \Vert_{\L^2(0, \frac{1}{2})}^2
            & \leqslant \Vert \icdf_{\ssol} - g_r^{-1} \Vert_{\L^2(0, \frac{1}{2})}^2 \nonumber \\
            & = \int_0^\frac{1}{2} \left| \icdf_{\ssol}(s) - g_r^{-1}(s) \right|^2 \dd[s]. \label{eq:proof_transp_after:int_err}
	\end{align}
    Next, we consider separately the integrals on $(0, g_r(0)]$ and $\left(g_r(0), \frac{1}{2} \right]$.
    To estimate the first part, we remark that $g_r^{-1} - \icdf_{\ssol}$ is positive and increasing on
    the interval $\left(0, \frac{1}{2} \right]$. Hence
    \begin{equation*}
        \int_0^{g_r(0)} \left| \icdf_{\ssol}(s) - g_r^{-1}(s) \right|^2 \dd[s] \leqslant |\icdf_{\ssol}(g_r(0))|
            \int_0^{g_r(0)} \left( g_r^{-1}(s) - \icdf_{\ssol}(s) \right) \dd[s].
    \end{equation*}
    Using inverses, we have that
    \begin{equation*}
        \int_0^{g_r(0)} \left( g_r^{-1}(s) - \icdf_{\ssol}(s) \right) \dd[s] \leqslant
            \int_{-\infty}^0 \left( \cdf_{\ssol}(x) - g_r(x) \right) \dd[x]
            = \int_{-\infty}^0 h_r(x) \dd[x]
            = \frac{1}{4\szeta} e^{-\szeta r},
    \end{equation*}
    and
    \begin{equation*}
        |\icdf_{\ssol}(g_r(0))| = \frac{1}{\szeta} \asinh\left(\frac{1}{2}\right).
    \end{equation*}
    From this, it follows that
    \begin{equation*}
        \int_0^{g_r(0)} \left| \icdf_{\ssol}(s) - g_r^{-1}(s) \right|^2 \dd[s] \leqslant
            \frac{1}{4\szeta^2} \asinh\left(\frac{1}{2}\right) e^{-\szeta r}.
    \end{equation*}
    We now bound the second part:
    \begin{align*}
        \int_{g_r(0)}^\frac{1}{2} \left| \icdf_{\ssol}(s) - g_r^{-1}(s)  \right|^2 \dd[s]
            & \leqslant 2 \int_{g_r(0)}^\frac{1}{2} \left| \icdf_{\ssol}(s) + r \right|^2 \dd[s]
             + \frac{2}{\szeta^2} \int_{g_r(0)}^\frac{1}{2} \left| \ln(2(1-2s)) \right|^2 \dd[s] \\
        & \leqslant 2r^2 \left(\frac{1}{2} - g_r(0) \right) + \frac{1}{2 \szeta^2}
            \int_{0}^{e^{-\szeta r}} \left| \ln(t) \right|^2 \dd[t] \\
        & \leqslant \frac{r^2}{2} e^{-\szeta r} + \frac{1}{2 \szeta^2}( \szeta^2 r^2 + 2\szeta r + 2)
            e^{-\szeta r} \\
        & = \left( r^2 + \frac{r}{\szeta} + \frac{2}{\szeta^2} \right) e^{-\szeta r}. 
    \end{align*}
    Combining these two estimates, we bound the integral in~\eqref{eq:proof_transp_after:int_err} by
    \begin{equation*}
		\int_0^\frac{1}{2} \left| \icdf_{\ssol}(s) - g_r^{-1}(s) \right|^2 \dd[s] \leqslant C r^2 e^{-\szeta r},
	\end{equation*}
    with $C$ a positive real constant independent $r$ and $R$.
    From this, we easily deduce that
    \begin{equation*}
		\Vert \icdf_{\ssol} - \dP_{V_R} \icdf_{\ssol} \Vert_{\L^2(0, 1)}
            \leqslant C R e^{-\frac{1}{2} \zeta_R R},
	\end{equation*}
    for $R$ large enough such that for all $r \geqslant R$, we have $r^2 e^{-\szeta r} \leqslant R^2 e^{-\zeta_R R}$.
\end{proof}

\subsection{Proof of Theorem~\ref{thrm_final}}\label{sec:proof_thrm_final}

\begin{proof}[Proof of Theorem~\ref{thrm_final}]
	We consider the vector spaces $V$ and $V_n$ as respectively defined in~\eqref{eq:proof_transp_before:def_V}
    and~\eqref{eq:proof_transp_before:def_Vn} and the vector space $V_R$ as defined in~\eqref{eq:def_vec_VR}.

	First of all, fixing $\varepsilon > 0$ there exists a constant $C_\varepsilon>0$ such that
	\begin{equation*}
		\forall r \geqslant R,~ \Vert\icdf_{\ssol} - \dP_{V_R}\icdf_{\ssol}\Vert_{\L^2(0, 1)}
		  \leqslant CR e^{-\frac{z}{2} R} \leqslant C_\varepsilon e^{-(\frac{z}{2} - \varepsilon)R},
	\end{equation*}
    from Theorem~\ref{thrm:transp_after} and~\eqref{eq:zeta_eq}. Also, from~\eqref{eq:proof_transp_before:final_bound} and~\eqref{eq:zeta_eq}, we have
    \begin{equation*}
		\forall r \in [0, R],~ \Vert\icdf_{\ssol} - \dP_{V\oplus V_n}\icdf_{\ssol}\Vert_{\L^2(0, 1)}
		  \leqslant Ce^{10z R} n^{-\frac{5}{2}}.
	\end{equation*}
	Hence, 
	\begin{align*}
        \dn[3n+4](\T, \L^2(0, 1))
        & \leqslant \sup_{r\in\RR^+} \Vert \icdf_{\ssol} - \dP_{(V\oplus V_n) + V_R}\icdf_{\ssol} \Vert_{\L^2(0, 1)} \\
	    & \leqslant \max\left(\sup_{r\in [0, R]} \Vert \icdf_{\ssol} - \dP_{V\oplus V_n}\icdf_{\ssol} \Vert_{\L^2(0, 1)};
	       \sup_{r\geqslant R} \Vert \icdf_{\ssol} - \dP_{V_R}\icdf_{\ssol} \Vert_{\L^2(0, 1)} \right) \\
	    & \leqslant \max\left( Ce^{10z R} n^{-\frac{5}{2}}; C_\varepsilon e^{-(\frac{z}{2}- \varepsilon)R} \right).
	\end{align*}
	Choosing $R_n$ satisfying
	$
		Ce^{10z R_n} n^{-\frac{5}{2}} = C_\varepsilon  e^{-(\frac{z}{2}- \varepsilon)R_n}
        $, that is
        \[
        R_n = \frac{\ln \frac{C_\epsilon}{C}  + \frac{5}{2}\ln n}{\frac{21}{2}z - \varepsilon},
	\]
	we obtain the result.
\end{proof}

\subsection{Proof of Theorem~\ref{thrm_mixture}}\label{sec:proof_thrm_mixture}

\begin{proof}[Proof of Theorem~\ref{thrm_mixture}]
    Let $m^1$ and $m^2$ be two symmetric mixtures of $K = 2$ elements, with
    parameters $r^1 = 1$, $r^2 = 2$, $\zeta^1 = 1$ and $\zeta^2 = 2$,
    that is
    \begin{equation*}
        m^1 = \frac{1}{2} \left( \S{\zeta^1}{-r^1} + \S{\zeta^1}{r^1} \right)
        \text{~~~and~~~}
        m^2 = \frac{1}{2} \left( \S{\zeta^2}{-r^2} + \S{\zeta^2}{r^2} \right).
    \end{equation*}
    Let us denote by $m_1^1 :=\S{\zeta^1}{-r^1}$, $m_2^1:= \S{\zeta^1}{r^1}$, $m_1^2:=  \S{\zeta^2}{-r^2}$ and $m_2^2:= \S{\zeta^2}{r^2}$ so that $ m^1 = \frac{1}{2} \left( m_1^1 + m_2^1 \right)$ and $m^2 =  \frac{1}{2} \left( m_1^2  + m_2^2\right)$. Since
    \begin{equation*}
        \distW\left(m^1_1, m^2_1\right)^2 = \distW\left(m^1_2, m^2_2\right)^2 = (r^1 - r^2)^2 + 2\left(\frac{1}{\zeta^1} - \frac{1}{\zeta^2} \right)^2
            = \frac{3}{2},
    \end{equation*}
    and
    \begin{equation*}
        \distW\left(m^1_1, m^2_2\right)^2 = \distW\left(m^1_2, m^2_1\right)^2 = (r^1 + r^2)^2 + 2\left(\frac{1}{\zeta^1} - \frac{1}{\zeta^2} \right)^2
            = \frac{19}{2},
    \end{equation*}
    it is easy to determine the only solution $w^*\left({\bm m}; \overline{{\bm \lambda}}\right)$ of~\eqref{eq:mixture_distance} with $\overline{\boldsymbol{\lambda}} = (1/2,1/2)$ and $\boldsymbol{m} = (m^1, m^2)$.
    Indeed, in this case, the distance $\distMW(m^1, m^2)$ simplifies to
    \begin{equation*}
        \distMW(m^1, m^2) = 2 \min_{\begin{array}{c}
        w_{11}, w_{12}\in \mathbb{R}_+\\
        w_{11} + w_{12} = \frac{1}{2}\\
        \end{array}} w_{11} \distW\left(m^1_1, m^2_1\right) + w_{12} \distW\left(m^1_1, m^2_2\right)
        = \min_{\begin{array}{c}
        w_{11}, w_{12}\in \mathbb{R}_+\\
        w_{11} + w_{12} = \frac{1}{2}\\
        \end{array}} 3w_{11} + 19w_{12},
    \end{equation*}
    which is clearly attained for $w_{11} = \frac{1}{2}$ and $w_{12} = 0$. Hence, the unique solution $w^*\left({\bm m}; \overline{{\bm \lambda}}\right) = \begin{pmatrix} \frac{1}{2} & 0 \\ 0 & \frac{1}{2} \end{pmatrix}$.

    \medskip
    
    Then the barycenters between $m^1$ and $m^2$ for a weight 
    \[ 
    {\bm \lambda} = (\lambda_1,\lambda_2)\in \Omega_2(m^1,m^2):=\left\{ (\lambda_1, \lambda_2)\in \mathbb{R}^2: \; \lambda_1 + \frac{1}{2}\lambda_2 > 0\right\}
    \]
    (since for all $(\lambda_1, \lambda_2)\in \mathbb{R}^2$ , $\frac{\lambda_1}{\zeta^1} + \frac{\lambda_2}{\zeta^2} = \lambda_1 + \frac{1}{2}\lambda_2$) are equal to
    \begin{equation*}
        \appbarMW(m^1, m^2) = \frac{1}{2} \left( \S{-r^{\bm \lambda}}{\zeta^{\bm \lambda}} +
            \S{r^{\bm \lambda}}{\zeta^{\bm \lambda}} \right),
    \end{equation*} 
    where
    \begin{equation*}
        r^{\bm \lambda} = \lambda_1 r^1 + \lambda_2 r^2 = \lambda_1 + 2\lambda_2
        \text{~~~and~~~}
        \zeta^{\bm \lambda} = \left[ \frac{\lambda_1}{\zeta^1} + \frac{\lambda_2}{\zeta^2} \right]^{-1} =  \left[ \lambda_1 + \frac{1}{2}\lambda_2 \right]^{-1}.
    \end{equation*}
    Let $u_r \in \M_\charges$.
    To show that $u_r$ is indeed an approximate mixture Wasserstein barycenter of $m^1$ and $m^2$, we find $\bm \lambda \in \RR^2$ such that $r^{\bm \lambda} = r$ and $\zeta^{\bm \lambda} = \szeta$,
    which amounts to solving the linear system
    \begin{equation*}
           \begin{cases}
               \lambda_1 + 2\lambda_2 = r \\
               \lambda_1 + \frac{1}{2}\lambda_2 = \szeta^{-1},
           \end{cases}
    \end{equation*}
    the unique solution $(\lambda_1, \lambda_2)$ of which is given by    
       $\displaystyle \lambda_1 = \frac{1}{3}( -r + 4\szeta^{-1} )$ and $\displaystyle\lambda_2 = \frac{2}{3}( r - \szeta^{-1} )$) and can be easily checked to belong to $\Omega_2(m^1,m^2)$. 
\end{proof}

\section{Conclusion}

In this work, we have focused our attention on a one-dimensional parametrized toy problem, which is insightful in terms of the difficulties faced by standard model order reduction methods to accelerate the resolution of parametrized electronic structure problems. We proved that the linear Kolmogorov $n$-width of solution sets for this equation decays at a slow algebraic rate with respect to $n$. We proved that modified Kolmogorov widths, based on optimal transport tools, in particular Wasserstein mixture distances, decay much faster. Motivated by this result, we proposed a modified greedy algorithm, precisely based on mixture Wasserstein distances and corresponding barycenters, which gives highly encouraging results. Our aim is now to export the ideas and concepts of the present work in order to build efficient reduced order models for realistic parametrized electronic structure problems in a forthcoming work.

The nonlinear reduced basis strategy presented in this work can, in principle, be adapted to application domains beyond electronic structure calculations. 
In particular, one could consider atomic measures different from Slater distributions with localized support—for example, Wigner distributions—in order to construct nonlinear reduced order models for problems defined on bounded subsets
of $\mathbb{R}^d$, as is often the case in mechanical engineering applications. 
However, the  present approach has some important limitations: 
(i) Outputs of interest must be functions with constant sign (at least up to some simple transformations) to relate them to probability distributions. (ii) One cannot prescribe complex boundary conditions on the solutions of the reduced order model, which may be a bottleneck towards the extension of the present methodology to realistic engineering applications.

\section*{Acknowlegments}

The authors thank Alexandre Nou for pointing out the Paley--Wiener results linked to the proof presented in Appendix. 
They also thank Christoph Ortner for interesting discussions.

This project has received funding from the
European Research Council (ERC) under the European Union's Horizon 2020
research and innovation program (grant agreements EMC2 No 810367 and HighLEAP No 101077204). This work
was supported by the French ‘Investissements d’Avenir’ program, project Agence
Nationale de la Recherche (ISITE-BFC) (contract ANR-15-IDEX-0003), as well as the ANR project NUMERIQ (ANR-24-CE46-2255). GD was also supported by the Ecole des Ponts-ParisTech. 
GD aknowledges the support of the region Bourgogne Franche-Comt\'e.
VE acknowledges support from the ANR project COMODO (ANR-19-CE46-0002).

\bibliographystyle{siam}
\bibliography{biblio.bib}

\end{document}